\newcommand{\R}{\mathbb{R}}
\newcommand{\N}{\mathbb{N}}
\newcommand{\Borelcodes}[1]{\textbf{BC}^{#1}}
\newcommand{\Borelfunctions}[1]{\textbf{TBF}^{#1}}
\newcommand{\Analytic}[1]{\textbf{A}^{#1}}
\newcommand{\Coanalytic}[1]{\textbf{C}^{#1}}
\newcommand{\Hausdorff}[1]{\mathcal{H}^{#1}}
\newtheorem*{theorem*}{Theorem}
\newtheorem{theorem}{Theorem}[section]
\newtheorem{lemma}[theorem]{Lemma}
\newtheorem{cor}[theorem]{Corollary}
\newtheorem{conj}[theorem]{Conjecture}
\newtheorem{prop}[theorem]{Proposition}
\theoremstyle{definition} \newtheorem{remark}[theorem]{Remark}
\newtheorem*{defin*}{Definition}
\newtheorem{defin}[theorem]{Definition}
\DeclareMathOperator{\proj}{proj}
\DeclareMathOperator{\gr}{graph}
\DeclareMathOperator{\bel}{int}
\DeclareMathOperator{\graph}{graph}
\DeclareMathOperator{\essupp}{ess-\sup}
\DeclareMathOperator{\essup}{ess^{\al}-\sup}
\DeclareMathOperator{\essuprime}{ess^{\al'}-\sup}
\DeclareMathOperator{\essupk}{ess^{\mathit{k}}-\sup}
\DeclareMathOperator{\essupno}{ess^1-\sup}
\DeclareMathOperator{\essupep}{ess^{\al-\ep}-\sup}
\def\leb{\mathcal{L}}
\def\hau{\mathcal{H}}
\def\garo{\mathcal{G}_{\rho}}
\newcommand{\al}{\alpha}
\newcommand{\be}{\beta}
\newcommand{\ga}{\gamma} 
\newcommand{\de}{\delta}
\newcommand{\eps}{\varepsilon} 
\newcommand{\ep}{\varepsilon} 
\newcommand{\om}{\omega}
\newcommand{\si}{\sigma}
\newcommand{\la}{\lambda}
\def\ti{\widetilde}
\def\lkb{\lesssim}
\def\rr{{\mathbb R}}
\def\zz{{\mathbb Z}}
\def\R{{\mathbb R}}
\def\su{\subset}
\def\se{\setminus}
\def\al{\alpha}
\def\be{\beta}
\def\ga{\gamma}
\def\Ga{\Gamma}
\def\de{\delta}
\def\ep{\varepsilon}
\def\si{\sigma}
\def\ol{\overline}
\def\ti{\widetilde}
\def\diam{{\rm diam}\, }
\def\dim{{\rm dim}\, }
\def\leb{\mathcal{L}}
\def\hau{\mathcal{H}}
\def\u{\mathcal{U}}
\def\s{\mathcal{S}}
\def\iq{\mathcal{Q}}
\def\ir{\mathcal{R}}
\def\ic{\mathcal{C}}
\def\ib{\mathcal{B}}
\def\ij{\mathcal{J}}
\def\net{\mathcal{N}}
\def\hal{\hau^{\al}}
\def\gal{\Ga_{\al}}
\def\lkb{\lesssim}
\providecommand{\semmi}[1]{}
\begin{document}

\title[A Fubini-type theorem for Hausdorff dimension]{A Fubini-type theorem for Hausdorff dimension}
\author{K. H\'era, T. Keleti, and A. M\'ath\'e}

\address
{Alfr\'ed R\'enyi Institute of Mathematics, 
Re\'altanoda utca 13-15, H-1053 Budapest, Hungary}

\email{herakornelia@gmail.com}

\address
{Institute of Mathematics, E\"otv\"os Lor\'and University, 
P\'az\-m\'any P\'e\-ter s\'et\'any 1/c, H-1117 Budapest, Hungary}

\email{tamas.keleti@gmail.com}

\address
{Mathematics Institute, University of Warwick, Coventry, CV4 7AL, UK}

\email{a.mathe@warwick.ac.uk}

\thanks{This research was supported  
by the Hungarian National Research, Development and Innovation Office -- NKFIH, 124749.
The second author is grateful to the Alfr\'ed R\'enyi Institute,
where he was a visiting researcher during a part of this project.
He was also partially supported 
by the Hungarian National Research, Development and Innovation Office -- NKFIH, 129335.
} 

\begin{abstract}
It is well known that a classical Fubini theorem for Hausdorff dimension cannot hold; that is, 
the dimension of the intersections of a fixed set with a parallel family of planes do not determine the dimension of the set. 
Here we prove that a Fubini theorem for Hausdorff dimension does hold modulo sets that are small on all Lipschitz graphs. 

We say that $G\subset \R^k\times \R^n$ is $\Gamma_k$-null if for every Lipschitz function $f:\R^k\to \R^n$ the set $\{t\in\R^k\,:\,(t,f(t))\in G\}$ has measure zero. We show that for every Borel set $E\subset \R^k\times \R^n$ 
with $\dim (\proj_{\R^k} E)=k$  there is a $\Gamma_k$-null subset $G\subset E$ such that
$$\dim (E\setminus G) = k+\essupp(\dim E_t)$$
where $\essupp(\dim E_t)$ is the essential supremum of the Hausdorff dimension of the vertical sections $\{E_t\}_{t\in \R^k}$ of $E$. 
In addition, we show that, provided that $E$ is not $\Gamma_k$-null, there is a $\Gamma_k$-null subset $G\subset E$ such that for $F=E \setminus G$, 
the Fubini-property holds, that is, $\dim (F) = k+\essupp(\dim F_t)$. 

We also obtain more general results by replacing $\R^k$ by an Ahlfors--David regular set.
Applications of our results include Fubini-type results for unions of affine subspaces,
connection to the Kakeya conjecture and 
projection theorems. 
\end{abstract}

\maketitle

\tableofcontents

\section{Introduction}
\label{intro}

In this paper we prove Fubini-type results for Hausdorff dimension. 
It is well known, see e.~g.~\cite{Fa}, that for Hausdorff dimension, the classical Fubini theorem does not hold. 
More precisely, it is not true in general that if all the vertical sections of a plane set are $\be$-dimensional then the set is $(\be +1)$-dimensional: 
its Hausdorff dimension can be strictly greater than $\be +1$. 
On the other hand, it is also known that this phenomenon cannot appear in many directions simultaneously. In fact, the (naive) Fubini theorem for Hausdorff dimension does hold in almost every direction (see Proposition \ref{marmat}).
 
In this paper we are interested in Fubini type results that hold in every (fixed) direction --- this is also the setting that has relevance to projection theorems.
We prove that every set can be written as a union of a set for which the (naive) Fubini theorem actually holds and a set that is small on Lipschitz curves that are transversal to that fixed direction.



We use the notion of $\Gamma$-null sets, 
sets intersecting each Lipschitz graph in zero measure. 
We prove that if the plane set $B$ is Borel such that its projection onto the horizontal axis has 
Hausdorff
dimension 1, 
then there exists a $\Gamma$-null set $G \su B$ such that the Hausdorff dimension of $B \se G$ equals 1 plus 
the essential supremum of the Hausdorff dimension of the vertical sections of $B$.  
As a corollary (Corollary~\ref{fucor}), we also show that, provided that $B$ is not $\Gamma$-null, there is a $\Gamma$-null subset $G\subset B$ such that for $F=B \setminus G$, 
the Fubini-property holds, that is, the Hausdorff dimension of $F$ equals 1 plus 
the essential supremum of the Hausdorff dimension of the vertical sections of $F$.
More generally, we prove results for sets $B \su \rr^k \times \rr^n$
for which the projection of $B$ to $\rr^k$ is contained in an Ahlfors--David $\alpha$-regular set $T$, where $\alpha$ is the Hausdorff dimension of the projection of $B$ to $\rr^k$.
(Here $\alpha$-regularity of $T$ means that
the $\hal$-measure of the intersection of $T$ with balls centered at the points of $T$ of radius $r$ is approximately $r^{\al}$ for all $r>0$.) 
It turns out that the Ahlfors--David regularity condition is crucial. 

We also study the special case when the Borel set $B\su\R^m$ is the union of lines that are not orthogonal to the first axis.
We formulate the conjecture (Conjecture~\ref{conjunion}) that 
for these sets the Fubini property holds:
the Hausdorff dimension of $B$ equals $1$ plus
the essential supremum of the Hausdorff dimension of the 
intersections of $B$ with the hyperplanes perpendicular to the
first axis.
Among others we prove that this conjecture, if true, 
would imply a weak form of the Kakeya conjecture, and 
we also prove that our conjecture holds 
when the Hausdorff dimenion of $B$ is less than $2$.
In fact, we study not only unions of lines but also unions of
$k$-dimensional affine subspaces in $\R^m$ for any $1\le k<m$.
Using standard duality arguments, our Fubini type results can be translated to projection theorems: for a natural restricted family of projections we give estimates for the dimension of almost every projection of a fixed set (Corollaries~\ref{proj1} and \ref{proj2}).


The paper is organized as follows: 
In Section~\ref{s:mainresults} 
we give the main set-up of our problem 
and present some classical theorems as well as some
observations  
and we state our main Fubini type results Theorems~\ref{thm1} and \ref{thm2} for Hausdorff dimension. 
In the next two sections we show how these results can
be applied: 
in Section~\ref{appl} to unions of affine subspaces and to get projection theorems and in Section~\ref{s:counter-examples} to prove 
the above mentioned Corollary~\ref{fucor} about the Fubini property of a large subset.
In Section~\ref{s:counter-examples} we also 
show counterexamples for possible stronger statements.
In Section \ref{pfthm2} we prove our main results, Theorems \ref{thm1} and \ref{thm2}, subject to two lemmas, Lemma \ref{meas} and Lemma \ref{lem1}. 
The proof of Lemma \ref{meas} is contained in Section \ref{pfmeas} and the Appendix. 
Lemma \ref{lem1} is proved in Section \ref{pflem1}. 

\section{Preliminaries and statements of our main results}
\label{s:mainresults}

\subsection{Notation and basic definitions}
\label{notate}
The open ball of center $x$ and radius $r$ will be denoted by $B(x,r)$ or $B_{\rho}(x,r)$ if we want to indicate the metric $\rho$. 
The closed ball of center $x$ and radius $r$ will be denoted by $\ol{B}(x,r)$. 
For a set $U \su \rr^n$, $U_{\de}=\cup_{x \in U} B(x,\de)$ denotes the open $\de$-neighborhood of $U$, and $\diam (U)$ denotes the diameter of $U$. 

Let $s \geq  0$, $\de \in (0,\infty]$ and $A \su \rr^n$. By the $s$-dimensional Hausdorff $\de$-premeasure of $A$ we mean  
$$\hau^s_{\de}(A)= \inf \left\{\sum_{i=1}^{\infty} (\diam(U_i))^s : A \su \bigcup_{i=1}^{\infty} U_i, \ \diam(U_i) \leq \de \ (i=1,2,\dots)\right\}.$$ 
The $s$-dimensional Hausdorff measure of $A$ is defined as $\hau^s(A)=\lim_{\de \to 0} \hau^s_{\de}(A)$.
The Hausdorff dimension of $A$ is defined as 
$$\dim A=\sup\{s: \hau^s(A)>0\}.$$ 
For the well known properties of Hausdorff measures and dimension, see e.g. \cite{Fa}. 

For a finite set $A$, let $|A|$ denote its cardinality. 

We will use the notation $a \lesssim_{\alpha} b$ if $a \leq Cb$ where $C$ is a constant depending on $\alpha$. 
If it is clear from the context what $C$ should depend on, we may write only $a \lesssim b$.

For any integers $1 \leq  k < m$, let $G(m,k)$ / $A(m, k)$ denote the space
of all $k$-dimensional linear / affine subspaces of $\rr^m$, respectively. 

We will denote the orthogonal projection from $\rr^m$ onto a subspace $V \in G(m,k)$ by $\proj_V$. 
We will write $\rr^k$ for the subspace $\rr^k \times \{0\}$ in $\rr^m$, and write $\proj_{\rr^k}$ for the orthogonal projection onto it.

\subsection{The Fubini-type setting}
\label{fubinitype}
In this section we define the set-up for the Fubini-type problem for Hausdorff dimension and present some classical theorems as well as some easily derivable observations. 

Fix two integers $n,k \geq 1$, let $B \su \rr^{k} \times \rr^n$, and for any 
$t \in \rr^k$ let 
$$B_t=\{x \in \rr^n: (t,x) \in B\},$$
the ``vertical'' section of $B$ corresponding to $t$. 
The following theorem, which says that if a set $B$ has many large sections then $B$ is large, is well known (see e.g. \cite{Fa}). 

\begin{theorem}
\label{classic}
For any 
$B \su \rr^{k} \times \rr^n$,
$0 \leq \al \leq k$ and $0 \leq \be \leq n$, if there exists 
$A \su \rr^k$ with $\dim A \geq \al$ such that for all $t \in A$, $\dim B_t \geq \be$, then $\dim B \geq \al + \be$. 
\end{theorem}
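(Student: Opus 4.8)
The plan is to prove the standard lower bound $\dim B \ge \al + \be$ via a potential-theoretic / energy argument, or equivalently via the Frostman-measure construction. First I would reduce to the case where the quantities are strict: it suffices to show $\dim B \ge \al' + \be'$ for every $\al' < \al$ and $\be' < \be$, and then let $\al' \to \al$, $\be' \to \be$. Fixing such $\al', \be'$, by Frostman's lemma there is a compact set $A' \su A$ with $0 < \hau^{\al'}(A') < \infty$, so the normalized restriction $\mu = \hau^{\al'}|_{A'}$ is a probability measure on $\rr^k$ satisfying $\mu(B(t,r)) \lesssim r^{\al'}$ for all $t, r$. Similarly, for each $t \in A'$ we have $\dim B_t \ge \be > \be'$, so $B_t$ carries a probability measure $\nu_t$ with $\nu_t(B(x,r)) \lesssim r^{\be'}$; here the subtle point is to choose the $\nu_t$ in a jointly measurable way (measurably in $t$), which one handles by a measurable selection theorem applied after passing to a Borel subset on which the Frostman exponent and constant are uniform (this is where one uses that $B$ — or a suitable subset of it — is Borel; if $B$ is arbitrary one first replaces $A$ by an $F_\sigma$ subset of the same dimension and similarly shrinks the sections, using that the conclusion only needs lower bounds on dimensions).

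Next I would define the product-type measure $\lambda$ on $B$ by $\lambda(S) = \int_{A'} \nu_t(\{x : (t,x) \in S\}) \, d\mu(t)$, which is a Borel probability measure supported on $B$ (on $\{(t,x) : t \in A', x \in B_t\} \su B$). The heart of the proof is the Frostman-type estimate $\lambda(B((t_0,x_0), r)) \lesssim r^{\al' + \be'}$. To see this, note that a ball of radius $r$ around $(t_0, x_0)$ is contained in $B(t_0, r) \times B(x_0, r)$, so
\[
\lambda(B((t_0,x_0), r)) \le \int_{B(t_0,r)} \nu_t(B(x_0, r)) \, d\mu(t) \lesssim \int_{B(t_0,r)} r^{\be'} \, d\mu(t) = r^{\be'} \mu(B(t_0,r)) \lesssim r^{\be'} r^{\al'}.
\]
By the mass distribution principle (the easy direction of Frostman's lemma), the existence of such a $\lambda$ gives $\hau^{\al' + \be'}(B) > 0$, hence $\dim B \ge \al' + \be'$. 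Letting $\al' \uparrow \al$ and $\be' \uparrow \be$ finishes the proof.

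The main obstacle is the measurability of the selection $t \mapsto \nu_t$, together with the reduction that makes the Frostman exponent and constant of the sections uniform over a positive-$\mu$-measure set of $t$; without uniformity one cannot integrate to get a single exponent $\be'$. The clean way around this: for fixed $\be' < \be$ and an integer $j$, let $A'_j = \{t \in A' : B_t \text{ carries a probability measure } \nu \text{ with } \nu(B(x,r)) \le j\, r^{\be'} \text{ for all } x, r\}$; one checks $A' = \bigcup_j A'_j$ (up to the Frostman normalization, every $t \in A'$ lies in some $A'_j$), each $A'_j$ is Borel (or at least $\mu$-measurable) when $B$ is Borel, so some $A'_j$ has $\mu(A'_j) > 0$; restrict $\mu$ to that $A'_j$ and apply a measurable selection theorem (e.g. Kuratowski--Ryll-Nardzewski, the set of admissible Frostman measures on $B_t$ being a closed subset of a Polish space of measures, varying measurably with $t$) to obtain a jointly measurable $t \mapsto \nu_t$. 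Everything else is routine.
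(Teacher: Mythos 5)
The proposal gives a Frostman/potential-theoretic proof, which is a standard and conceptually clean route but is \emph{not} the one the paper points to (the paper cites Falconer's \emph{The Geometry of Fractal Sets}, where the argument is a direct covering/net-measure estimate). More importantly, the proposal as written has a genuine gap that the covering argument does not: Theorem~\ref{classic} is stated for \emph{arbitrary} $B \su \rr^k \times \rr^n$ and \emph{arbitrary} $A \su \rr^k$, and the paper really does use it at that level of generality (e.g.\ in the proof of Proposition~\ref{p:Brownian}(ii) it is applied to an arbitrary subset $B'$ of a compact set, and in Remark~\ref{true} to the arbitrary $B$ of Definition~\ref{d:ess-sup}). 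Frostman's lemma, which drives your construction of $\mu$ and $\nu_t$, requires the underlying set to be Borel (or analytic/Suslin). You acknowledge this and propose to ``replace $A$ by an $F_\sigma$ subset of the same dimension,'' but that reduction is false in general: there are sets $A \su \rr^k$ of full Hausdorff dimension every closed subset of which is countable (a Bernstein-type construction), so every $F_\sigma$ subset has dimension $0$ and no Frostman measure on a compact subset of $A$ exists. The same obstruction blocks shrinking the sections $B_t$ when $B$ is not Borel.

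For Borel (or analytic) $B$ your argument is essentially complete and correct: the Frostman measure $\mu$ on a compact $A' \su A$, the uniformization by the sets $A'_j$ of $t$'s admitting a section measure with Frostman constant $\le j$, the measurable selection of $t \mapsto \nu_t$ (the constraint set is weak$^*$-closed once one passes to closed sections, and $A'_j$ is analytic hence universally measurable), the fibered measure $\lambda(S) = \int \nu_t(S_t)\,d\mu(t)$, the Frostman estimate $\lambda(B((t_0,x_0),r)) \lesssim r^{\al'+\be'}$ via $B((t_0,x_0),r) \su B(t_0,r)\times B(x_0,r)$, and the mass distribution principle all go through. But to prove the theorem as stated one should use the covering argument: fix $\al' < \al$, $\be' < \be$; since $\dim B_t > \be'$ for $t\in A$, the increasing sets $A_j = \{t \in A : \hau^{\be'}_{1/j}(B_t) \ge 1\}$ exhaust $A$, and $\hau^{\al'}(A) > 0$ forces $\hau^{\al'}_\infty(A_j) > 0$ for some $j$; any cover $\{U_i\}$ of $B$ with $\diam U_i < 1/j$ covers each $B_t$ for $t \in A_j$ with weight $\ge 1$, and a weighted covering estimate for Hausdorff content (cleanest with dyadic net cubes) yields $\sum_i (\diam U_i)^{\al'+\be'} \gtrsim \hau^{\al'}_\infty(A_j) > 0$. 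This uses no regularity of $A$ or $B$ whatsoever, which is exactly what the statement requires.
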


On the contrary, even if all the sections of $B$ are small, that does not imply that $B$ is small. 
For example, it can happen that all sections of $B$ are singletons, and the dimension of $B$ is equal to 
the dimension of the ambient space. 
This is the context of the next theorem (see e.g. \cite{MW} and \cite{FH}). 

\begin{theorem}
\label{fctn}
There exists a continuous function $f: [0,1] \to [0,1]^n$ whose graph $B=\{(t,f(t)): t \in [0,1]\}$ has Hausdorff dimension $n+1$. 
\end{theorem}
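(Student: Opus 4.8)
The upper bound is immediate: $B\subset[0,1]\times[0,1]^n\subset\R^{n+1}$, so $\dim B\le n+1$, and the entire task is to produce a continuous $f$ whose graph has Hausdorff dimension at least $n+1$. My plan is to build $f$ by gluing together countably many ``local'' pieces whose dimensions increase to $n+1$, and to combine them via the countable stability $\dim\bigcup_k A_k=\sup_k\dim A_k$ of Hausdorff dimension.

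The step that does the real work is the following building block, which I would establish first: \emph{for every $\ep>0$ there is a continuous $g_\ep\colon[0,1]\to[0,1]^n$ with $g_\ep(0)=g_\ep(1)=0$ and $\dim\graph(g_\ep)>n+1-\ep$.} Granting this, I would partition $[0,1)$ into the intervals $I_k=[\,1-2^{-k+1},\,1-2^{-k}\,]$ $(k\ge1)$ and, on $I_k$, take $f$ to be the affine rescaling of $g_{1/k}$ that compresses the domain onto $I_k$ and shrinks the range into $[0,2^{-k}]^n$, setting $f(1)=0$. Consecutive pieces agree (with value $0$) at the shared endpoints, and the $k$-th piece has range of diameter at most $\sqrt n\,2^{-k}\to0$, so $f$ is continuous on $[0,1]$. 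Each rescaling is bi-Lipschitz, so $\graph(f|_{I_k})$ is a bi-Lipschitz copy of $\graph(g_{1/k})$ and therefore has dimension $>n+1-1/k$; by monotonicity and countable stability, $\dim\graph(f)=\sup_k\dim\graph(f|_{I_k})\ge n+1$, as required.

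To construct $g_\ep$ I would use a randomised Weierstrass-type function. Fix an integer $b\ge2$ and a fixed piecewise-linear profile $\Phi\colon\R/\Z\to[0,1]^n$ that is non-degenerate (its linear pieces point in $n$ linearly independent directions), let $g(x)=\sum_{j\ge0}c^{j}\,\Phi(b^{j}x+\theta_j)$ with $c\in(0,1)$ and random phases $\theta_j\in\R/\Z$, and then translate and rescale the range into $[0,1]^n$ and subtract an affine function (a bi-Lipschitz change of the graph) to make $g$ vanish at the endpoints. For every choice of phases $g$ is continuous, and a direct covering count shows that its box-counting dimension tends to $n+1$ as $c\uparrow1$; the construction is the same for all $n$, and for $n=1$ the existence of such building blocks (and more) is classical, see \cite{MW,FH}.

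The step I expect to be the main obstacle is upgrading this box-dimension estimate to the genuine \emph{Hausdorff}-dimension bound $\dim\graph(g_\ep)>n+1-\ep$. I would use the mass distribution principle: push Lebesgue measure on $[0,1]$ forward to a probability measure $\mu$ on $\graph(g)$ and show that the $s$-energy $\iint|z-w|^{-s}\,d\mu(z)\,d\mu(w)$ is finite for every $s<n+1$. Since, writing $z=(t_0,g(t_0))$, the integrand is governed by $\mu(B(z,\delta))=\big|\{t:|t-t_0|<\delta\ \text{and}\ |g(t)-g(t_0)|<\delta\}\big|$, this amounts to controlling at every scale how long the vertical motion of $g$ can linger near a given point and, crucially once $n\ge2$, to ruling out the ``concentration'' scenario in which the $n$ coordinates of $g$ fail to decorrelate so that the graph stays inside a thin slab. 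The cleanest way to handle this is to estimate the $s$-energy \emph{in expectation} over the random phases: the oscillation of the series and the independence of the $\theta_j$ make $\mathbb E\big[\iint|z-w|^{-s}\,d\mu(z)\,d\mu(w)\big]<\infty$ for $s<n+1$ once $c$ is close enough to $1$, so a deterministic choice of phases with finite $s$-energy exists. This expected-energy computation --- and in particular the role of the non-degeneracy of $\Phi$ in making the $n$ coordinates spread out --- is the technical heart of the argument; granted it, the gluing in the second paragraph finishes the proof.
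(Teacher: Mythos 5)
The paper itself does not prove Theorem~\ref{fctn}; it cites \cite{MW} and \cite{FH}, which construct scalar-valued continuous functions with graphs of large Hausdorff dimension. Your gluing reduction is correct: countable stability of Hausdorff dimension, together with the fact that your rescalings are similarities, reduces the problem to producing, for each $\ep>0$, a continuous $g_\ep$ vanishing at the endpoints with $\dim\graph(g_\ep)>n+1-\ep$. (One small adjustment: do the final normalization of the range into $[0,1]^n$ once, on the glued $f$, rather than on each block; requiring $g_\ep(0)=g_\ep(1)=0$ and $g_\ep\ge0$ coordinatewise simultaneously is unnecessary and awkward.) The randomised Weierstrass plus energy idea is also the standard route in the scalar case, so the overall plan is on target.

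The genuine gap is the anti-concentration step for $n\ge2$ when a \emph{single} scalar phase $\theta_j\in\R/\Z$ is used per level. Each summand $c^j\bigl(\Phi(b^jt+\theta_j)-\Phi(b^ju+\theta_j)\bigr)$, as a random vector in the one real variable $\theta_j$, is supported on a one-dimensional curve in $\R^n$, so no single level can supply the $n$-dimensional small-ball estimate $\mathbb{P}\bigl(|g(t)-g(u)|<r\bigr)\lesssim (r/c^N)^n$ at scale $|t-u|\approx b^{-N}$ that your expected-energy bound requires. You would have to combine several consecutive levels and prove that the relevant tangent directions --- differences of the piecewise-constant derivatives of $\Phi$, evaluated at the level-dependent shifts $b^j(t-u)\bmod 1$ --- quantitatively span $\R^n$, uniformly in $t,u$; the stated non-degeneracy hypothesis on $\Phi$ (linearly independent slopes) does not yield that by itself. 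The same obstruction also undermines the claim that a ``direct covering count'' gives the box dimension: the graph over a $\delta$-interval is a connected curve, and there is no a priori reason it meets on the order of $(c^N/\delta)^n$ rather than merely $c^N/\delta$ cubes of side $\delta$. The clean repair is to take a scalar profile $\phi$ with independent phases per coordinate, $g_i(x)=\sum_j c^j\phi(b^jx+\theta_{i,j})$ for $i=1,\dots,n$; then the coordinates of $g(t)-g(u)$ are genuinely independent, each obeys the one-dimensional small-ball bound from the scalar theory, and the $n$-dimensional bound follows by taking products. With that change your energy computation closes and the gluing completes the proof.
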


In this paper we will prove a Fubini-type result of the following nature: if all the sections of $B$ have small Hausdorff dimension, 
then we can remove a small set $G \su B$ (the specific notion of smallness we use will be defined later) 
such that the remaining set, $B \se G$ has small Hausdorff dimension. 

\begin{defin}\label{d:ess-sup}
For any non-empty $B \su \rr^k \times \rr^n$, let $A=\proj_{\rr^k}B \su \rr^k$ and  $\al=\dim A$. 
If $\hal(A)>0$, we define 
\begin{align}
\label{eqessup}
\essup(\dim B_t)= & \sup\{q \geq 0: \hal(\{t \in \rr^k: \dim B_t > q \}) >0\} \nonumber \\ 
=& \inf\{q \geq 0: \hal(\{t \in \rr^k: \dim B_t > q \}) =0\},
\end{align}
where $\sup \emptyset$ is taken to be zero.

If $\hal(A)=0$, we let 
\begin{equation}
\label{essnull}
\essup(\dim B_t)=\lim_{\ep \to 0+} \essupep (\dim B_t). 
\end{equation}
\end{defin}

\begin{remark}
\label{true}
Note that Theorem \ref{classic} implies that if $B$ is as 
in Definition~\ref{d:ess-sup}, 
then $\dim B \geq \al + \essup(\dim B_t)$. 
\end{remark}
Intuitively, we can say that ``$B$ behaves nicely along $\rr^k$'', if the reverse inequality holds as well. 
For this we introduce the following definition.

\begin{defin}[Fubini property]
\label{fuprop}
We say that a nonempty set 
$B \su \rr^k \times \rr^n$ has the 
\emph{Fubini property}, if $\dim B = \al + \be$, where $\al= \dim \proj_{\rr^k} B$, and $\be = \essup(\dim B_t)$. 
\end{defin}

We can naturally extend 
Definitions~\ref{d:ess-sup} and \ref{fuprop}
for any fixed $k$-dimensional subspace $V \in G(k+n,k)$ in place of $\rr^k$. Let $W=V^{\perp}$, 
$\emptyset\neq B \su \rr^{k+n}$, 
and 
$\al=\dim \proj_{V} B$.
If $\hal( \proj_{V} B)>0$,
we define 
\begin{align}
\label{essupv}
\essup(\dim B_{V,t})=& \sup\{q  \geq 0:  \hal(\{t \in V: \dim (B \cap (W +t)) > q \}) >0\} \nonumber \\ 
=& \inf\{q \geq 0: \hal(\{t \in V: \dim  (B \cap (W +t)) > q \}) =0\}.
\end{align}
Similarly as in \eqref{essnull}, if $\hal( \proj_{V} B)=0$, we define 
\begin{equation}
\label{essnullv}
\essup(\dim B_{V,t})=\liminf_{\ep \to 0} \essupep (\dim B_{V,t}). 
\end{equation}
Finally, we say that $B$ has the 
\emph{Fubini propery along} $V$ if
\begin{equation}
\label{aav}
\dim B = \al +  \essup(\dim B_{V,t}). 
\end{equation}

Now we prove that any Borel set 
``behaves nicely'' along almost every subpace, 
using an easy application of the projection theorem 
and the instersection theorem 
of Marstrand and Mattila. 

\begin{prop}
\label{marmat}
Any nonempty Borel set $B\su\R^{k+n}$ has the
Fubini property along almost every $k$-dimensional
subspace; that is,
\eqref{aav} holds for 
$\ga_{k+n,k}$-almost every $V \in G(k+n,k)$, 
where $\al= \dim \proj_{V} B$ and 
 $\ga_{k+n,k}$ denotes the natural measure on the Grassmannian. 
\end{prop}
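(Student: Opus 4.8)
The plan is to reduce the statement to the classical projection and intersection (slicing) theorems of Marstrand and Mattila by a Fubini-type argument over the Grassmannian. Fix a Borel set $B\su\R^{k+n}$, write $m=k+n$, and set $d=\dim B$. We already know from Remark~\ref{true} (an application of Theorem~\ref{classic}) that for every $V\in G(m,k)$ we have
$$\dim B \ge \dim\proj_V B + \essup(\dim B_{V,t}),$$
so the whole content is the reverse inequality for $\ga_{m,k}$-almost every $V$. I would split into two regimes according to the size of $d$.

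First, the case $d\le k$. Marstrand's projection theorem (in the form for $k$-planes, due to Mattila) says that for $\ga_{m,k}$-a.e.\ $V$, $\dim\proj_V B = \dim B = d$, and moreover $\hau^d(\proj_V B)>0$ is \emph{not} guaranteed, but what is guaranteed is that $\dim\proj_V B=d$. Combined with the trivial bound $\essup(\dim B_{V,t})\ge 0$ this already forces, together with the reverse inequality, that $\dim B = \dim\proj_V B$, i.e.\ $\essup(\dim B_{V,t})=0$ for a.e.\ $V$ — but one must be slightly careful when $\hal(\proj_V B)=0$, in which case one uses the definition \eqref{essnullv} and the fact that $\essupep(\dim B_{V,t})$ can be controlled by looking at sections of $B$ by affine subspaces $W+t$; applying the slicing theorem to each of countably many $\al-\ep$ levels and taking a liminf handles this. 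So in this regime \eqref{aav} holds a.e.

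Second, and this is the main case, $d>k$. Here Mattila's intersection theorem is the key tool: for a Borel set $B$ with $\dim B=d>k$ and for $W\in G(m,n)$ chosen $\ga_{m,n}$-a.e., one has $\dim\big(B\cap(W+t)\big)\le d-k$ for $\leb^k$-almost every translate $t$ (in the orthogonal complement $V=W^\perp$), and conversely the projection theorem gives $\dim\proj_V B = k$, indeed $\hau^k(\proj_V B)>0$, for a.e.\ $V$ since $d>k$. The first fact gives $\essup(\dim B_{V,t})\le d-k$; the second lets us use the first line of the definition \eqref{essupv}. Combining, $\dim\proj_V B + \essup(\dim B_{V,t}) \le k + (d-k) = d = \dim B$, which is exactly the reverse inequality, so \eqref{aav} holds for a.e.\ $V$. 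One must check that the exceptional $V$-sets from the projection theorem and from the intersection theorem are both $\ga_{m,k}$-null (equivalently $\ga_{m,n}$-null under $V\mapsto V^\perp$, which preserves the natural measure class), so their union is still null.

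The main obstacle I anticipate is bookkeeping around the degenerate case $\hal(\proj_V B)=0$, where the essential supremum is \emph{defined} via a limit \eqref{essnullv} rather than directly. There one cannot apply the slicing theorem to $B$ at the ``right'' exponent directly; instead I would fix a countable sequence $\ep_j\to 0$, and for each $j$ run the above argument with $d$ replaced by a value slightly above $\dim B$ — or rather argue that for a.e.\ $V$ and every $j$, the $\al$-essential supremum at level $\al-\ep_j$ is bounded by $\dim B - \al$ using the intersection theorem applied to $B$ together with a standard lower-dimensional-plane argument, then take $j\to\infty$. Since only countably many null sets are discarded, the conclusion survives. A secondary technical point is that Mattila's intersection theorem requires $B$ Borel (which we have) and is stated for a.e.\ plane in the appropriate Grassmannian; transferring between "a.e.\ $W\in G(m,n)$" and "a.e.\ $V\in G(m,k)$" is immediate because $V\mapsto V^\perp$ is a measure-class-preserving diffeomorphism $G(m,k)\to G(m,n)$.
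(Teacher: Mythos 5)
Your Case~1 ($\dim B\le k$) is essentially the paper's argument, although you are worrying about the degenerate case $\hal(\proj_V B)=0$ more than necessary: Remark~\ref{true} applies in both regimes of Definition~\ref{d:ess-sup}, and once you know $\dim\proj_V B=\dim B$ for a.e.\ $V$, the chain $\dim B\ge \dim\proj_V B+\essup(\dim B_{V,t})=\dim B+\essup(\dim B_{V,t})$ forces $\essup(\dim B_{V,t})=0$ directly, with no further case analysis.

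Your Case~2 ($\dim B>k$) has a genuine logical error: you end up re-deriving the easy inequality rather than the one you need. You correctly identify at the start that the content is the \emph{lower} bound $\dim B\le \dim\proj_V B+\essup(\dim B_{V,t})$, i.e.\ that $\essup(\dim B_{V,t})\ge \dim B-k$ for a.e.\ $V$. But then you invoke the slicing theorem in its \emph{upper}-bound form, $\dim\big(B\cap(W+t)\big)\le \dim B-k$ for a.e.\ $t$, and conclude $\dim\proj_V B+\essup(\dim B_{V,t})\le k+(\dim B-k)=\dim B$. That is exactly Remark~\ref{true} again (in fact the slice upper bound is automatic for \emph{every} $V$, not just a.e.\ $V$, by Theorem~\ref{classic}), so nothing new has been proven. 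What you need, and what the paper uses, is the opposite half of Mattila's intersection theorem: since $\hau^{\dim B-\ep}(B)>0$, for $\ga_{k+n,k}$-a.e.\ $V$ the set of $t\in V$ with $\dim\big(B\cap(W+t)\big)\ge \dim B-\ep-k$ has \emph{positive} $\hau^k$-measure. This gives $\essupk(\dim B_{V,t})\ge \dim B-\ep-k$ for a.e.\ $V$; letting $\ep\to 0$ along a countable sequence and discarding countably many null sets of planes yields $\essupk(\dim B_{V,t})\ge \dim B-k$, and then Theorem~\ref{classic} turns this into equality. So the overall strategy — projection theorem for $\al$ plus intersection theorem for the slices — is the right one, but you must use the positive-measure lower bound of the intersection theorem, not the a.e.\ upper bound.
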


\begin{proof}
Let $\dim B =\be $.

Assume first that $ \be \leq k$. 
Then by the projection theorem of Marstrand and Mattila, we obtain that 
for $\ga_{k+n,k}$-almost all $V \in G(k+n,k)$, $\dim \proj_V B=\be$. 
Clearly, we have $\essup(\dim B_{V,t})=0$, otherwise by Remark \ref{true} we would get $\dim B > \be$. Thus 
$\dim B = \be + 0$, which is precisely \eqref{aav} in this case. 

Assume now that $\be >k$. Let $\ep>0$ such that $\be - \ep >k$. 
By the projection theorem of Marstrand and Mattila, we obtain that 
for $\ga_{k+n,k}$-almost all $V \in G(k+n,k)$, $\hau^k(\proj_V B)>0$. 
Moreover, we have $\hau^{\be-\ep}(B)>0$, thus by the 
intersection theorem of Marstrand and Mattila (see e.g. \cite[Theorem 10.10]{Ma}), we have 
 for $\ga_{k+n,k}$-almost all $V \in G(k+n,k)$ that 
 $$\hau^k(\{t \in V: \dim (B \cap (W +t)) \geq \be-\ep-k \})>0,$$ 
where $W=V^{\perp}$. This implies by the definition in \eqref{essupv} that 
$\essupk(\dim B_{V,t}) \geq \be-\ep-k$ for each $\ep>0$, thus 
$\essupk(\dim B_{V,t}) \geq \be-k$. 
Combining the above two facts, we obtain that for $\ga_{k+n,k}$-almost all $V \in G(k+n,k)$, 
$$\dim B = \be = k + (\be-k) \leq  k +\essupk(\dim B_{V,t}).$$ 
In fact, by Theorem \ref{classic}, this is an equality, and \eqref{aav} follows. 

\end{proof}

Our aim in this paper is to prove a Fubini-type theorem, which holds along any fixed subspace $V$. 
For simplicity, we only state our results in the context where $V= \rr^k \times \{0\}$.

\subsection{Our main theorems}
\label{main}

We start by defining the notion of $\Ga$-nullness, a notion which has crucial role in our results. 

\begin{defin}[$\Ga_{\al}$-null sets]
Let $0 < \al \leq k$. We say that $G \su \rr^k \times \rr^n$ is \emph{$\Ga_{\al}$-null} if for every Lipschitz map $f: \rr^k \to \rr^n$,
\begin{equation}
\hau^{\al}(\{t \in \rr^k: (t,f(t)) \in G\})=0.
\label{gamma}
\end{equation}

\end{defin}

\begin{remark}
\label{gacup}
For each $\alpha$, the $\Gamma_\alpha$-null sets form a $\sigma$-ideal. In particular, if $G_i$ is $\gal$-null ($i=1,2,\dots$), then $G=\cup_{i=1}^{\infty} G_i$ is also $\gal$-null. 
Moreover, if $\al_1 < \al$ and $G$ is $\Ga_{\al_1}$-null, then $G$ is $\gal$-null as well. 
\end{remark}

\begin{remark}
\label{gahist}
Note that a similar but different notion of $\Ga$-nullness appears in the works of 
Alberti, Cs\"ornyei, Maleva, Preiss, Ti\v{s}er, Zají\v{c}ek, and others
related to the generalization of Rademacher's theorem and differentiability of Lipschitz maps, see e.g. 
\cite{PZ}, \cite{CDT}, \cite{ACP}, \cite{Pr}, \cite{MP}. 
In our paper, it is important that we have a fixed direction (``vertical'') and we consider Lipschitz curves that are transversal to that direction, in particular, we consider all Lipschitz graphs (using the standard coordinate system). In the above mentioned works, on the other hand, there is usually no constraint on the directions of the curves, and often only a subfamily $\Ga$ of Lipschitz curves is used. 
\end{remark}

In our results, the notion of Ahlfors--David regular sets
(see e.~g.~\cite[p.~92]{Ma})
will also be very important. 

\begin{defin}[Ahlfors--David regular sets]
\label{defdefahl}
For any $0 < \al \leq k$, we say that  the Borel set $T \su \rr^k$ is \emph{Ahlfors--David $\al$-regular}, if 
there exist $c, c' >0$ such that 
\begin{equation}
c \cdot r^{\al} \leq \hau^{\al}(T \cap B(t,r)) \leq c'\cdot r^{\al}
\label{ahldef}
\end{equation}
for all $t \in T$ and $0<r \leq 1$.
\end{defin}

\begin{remark}
\label{ahlregm}
Clearly,
if $T \su \rr^k$  is Ahlfors--David $\al$-regular, then 
$\hal(T)>0$, and $\hal|_T$ is $\si$-finite. 
\end{remark}

Our main result is the following. 
\begin{theorem}
\label{thm1}
Let $0<\alpha\le k$ and let $T\subset \mathbb{R}^k$ be an Ahlfors--David $\alpha$-regular set. Let $B\subset T\times \R^n$ be Borel such that $\dim \proj_{\rr^k} B = \al$.
Then
there exists a  $\Ga_{\al}$-null  Borel set $G \su B$ such that 
$$\dim (B \setminus G)= \al + \essup(\dim B_t).$$
(Recall Definition~\ref{d:ess-sup} for the notation $\essup(\dim B_t)$.)
\end{theorem}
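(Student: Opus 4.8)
The plan is to fix an enumeration of rational values $q$ with $\beta := \essup(\dim B_t)$ as the threshold, and to build $G$ by removing, for each $q > \beta$ rational and each $\varepsilon$, the part of $B$ lying over the (null) set $\{t : \dim B_t > q\}$ together with a ``thickened'' piece that captures the contribution of sections of dimension close to $\beta$. Since $\hal(\{t : \dim B_t > q\}) = 0$ for every $q > \beta$ by the definition of $\essup$, and the $\Ga_\al$-null sets form a $\sigma$-ideal (Remark~\ref{gacup}), the union $G_0$ of the preimages $(\proj_{\rr^k})^{-1}(\{t: \dim B_t > q\}) \cap B$ over rational $q > \beta$ is automatically $\Ga_\al$-null: on any Lipschitz graph $\{(t,f(t))\}$, the set of $t$ landing in $G_0$ is contained in a countable union of sets of $\hal$-measure zero. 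After removing $G_0$, every remaining section has dimension $\le \beta$. So the genuine content is the upper bound $\dim(B \setminus G_0) \le \al + \beta$, or rather a refinement of it where we are allowed to discard a further $\Ga_\al$-null set.

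For the upper bound I would first reduce to $B$ with all sections of dimension $\le \beta$, and then to a bounded Borel set. The core estimate I expect to be isolated as Lemma~\ref{meas} and Lemma~\ref{lem1}: a covering argument for $B$ that, given $\delta > 0$, produces an economical cover by products (or sets of bounded eccentricity) whose total $(\al + \beta + \eta)$-cost is controlled, \emph{except} for a ``bad'' part that one proves is $\Ga_\al$-null. The Ahlfors--David $\al$-regularity of $T$ enters precisely here: it gives a dyadic-type decomposition of $T$ into pieces of $\hal$-measure comparable to $r^\al$ at scale $r$, so that the number of scale-$r$ cubes of $T$ needed to cover a portion of $\hal$-mass $M$ is $\lesssim M r^{-\al}$, and this lets one trade measure on the base for dimension on the fibers in a uniform way. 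Without regularity the number of cubes is uncontrolled and the argument collapses — this is why the paper flags the hypothesis as crucial.

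Concretely, for a fixed target exponent $s = \al + \beta + \eta$ with $\eta > 0$, I would stratify the base $T$ by the ``local fiber dimension'': for a.e.\ $t$ the section $B_t$ has dimension $\le \beta$, so for each $t$ and each small scale $r$ one can cover $B_t$ by about $r^{-\beta - \eta/2}$ balls of radius $r$. The set of $t$ for which this fails uniformly at scale $r$ shrinks as $r \to 0$; the tail over all small scales is what becomes the extra $\Ga_\al$-null set $G_1$ (here one uses that a set which meets every Lipschitz graph in a set of positive $\hal$-measure must itself be ``large'' in a quantitative way that contradicts the smallness built into the stratification — essentially a dual formulation of $\Ga_\al$-nullness). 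On the complement, combine the $\lesssim M r^{-\al}$ base cubes with the $\lesssim r^{-\beta-\eta/2}$ fiber balls over each to get $\lesssim r^{-\al-\beta-\eta/2}$ product sets of diameter $\sim r$, hence $\hau^s_r(B \setminus (G_0 \cup G_1)) \lesssim r^{\eta/2} \to 0$. Letting $\eta \to 0$ along a sequence and taking the union of the corresponding $\Ga_\al$-null sets (again a $\sigma$-ideal) gives $G = G_0 \cup G_1$ with $\dim(B \setminus G) \le \al + \beta$; combined with the reverse inequality from Remark~\ref{true} (which survives the removal of a $\Ga_\al$-null set, since such a set cannot lower the essential supremum of section dimensions — one must check $\essup((B\setminus G)_t) = \essup(B_t)$, which holds because removing a $\Ga_\al$-null set removes at most an $\hal$-null set of sections entirely and, for the surviving sections, the argument shows the relevant sections retain positive measure) we get equality.

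\textbf{Main obstacle.} The hard part is the $\Ga_\al$-null bookkeeping in the covering step: one needs that the ``exceptional'' base points — those where the fiber cover fails at all small scales — can be absorbed into a $\Ga_\al$-null set rather than merely an $\hal$-null set, because an $\hal$-null base set can still carry a positive-dimensional, even full-dimensional, piece of $B$ (Theorem~\ref{fctn}). This is exactly where transversality to the vertical direction and the Lipschitz-graph formulation are used, and I expect it to require a separate measure-theoretic lemma (the role of Lemma~\ref{meas}) establishing that a Borel set which is not $\Ga_\al$-null must, after intersecting with a suitable Lipschitz graph, have positive $\hal$-density at scale $r$ for a non-negligible set of scales — a statement whose proof likely goes through a density/selection argument and is the technical heart of the paper.
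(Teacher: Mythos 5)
Your overall strategy is in the right spirit — pre-process $B$ so that almost every section has dimension $\le\be$, run a covering argument exploiting the Ahlfors--David regularity of the base, and assemble $G$ as a countable union of $\Ga_\al$-null pieces (using the $\sigma$-ideal property). That matches the paper's structure of reducing Theorem~\ref{thm1} to Theorem~\ref{thm2}, whose proof is indeed the covering/density argument you sketch, with Lemma~\ref{meas} supplying a measurable selection of covers and Lemma~\ref{lem1} doing the good/bad cylinder bookkeeping.

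However, there is a genuine gap in how you close the argument, namely in obtaining the \emph{lower} bound $\dim(B\se G)\ge \al+\be$. You claim this follows from Remark~\ref{true} applied to $B\se G$, on the grounds that ``removing a $\Ga_\al$-null set removes at most an $\hal$-null set of sections entirely and, for the surviving sections, \ldots the relevant sections retain positive measure.'' Neither half of this is justified. A $\Ga_\al$-null set controls intersections with Lipschitz graphs, \emph{not} its projection to $\R^k$: for instance, the graph of a sufficiently irregular (e.g.\ Brownian) function over $[0,1]$ can be $\Ga_1$-null while its projection is all of $[0,1]$. So removing a $\Ga_\al$-null set $G$ can truncate a \emph{positive}-$\hal$-measure set of sections, and there is no a priori reason $\dim(B_t\se G_t)$ should stay close to $\dim B_t$ for most $t$ — the sections $G_t$ are not themselves constrained by $\Ga_\al$-nullness. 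So $\essup(\dim(B\se G)_t)=\essup(\dim B_t)$ does not follow, and this is not merely a detail you omitted: it would need a separate argument, and it is unclear whether it is even true in the required generality.

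The paper sidesteps this entirely with a one-line device you did not anticipate: by Remark~\ref{true} we already know $\dim B\ge\al+\ga$ where $\ga=\essup(\dim B_t)$, so by Howroyd's theorem there is a Borel $B_1\su B$ with $\dim B_1=\al+\ga$ exactly. One then takes $G=G_1\se B_1$ (where $G_1$ is the $\Ga_\al$-null set produced by the covering argument). This keeps $B_1\su B\se G$, giving the lower bound trivially, while still leaving $G\su G_1$ and hence $\Ga_\al$-null; the upper bound persists because $B\se G=(B\se G_1)\cup B_1$ and both pieces have dimension $\le\al+\ga$. You should adopt something like this device rather than trying to argue that the essential supremum of section dimensions is preserved.

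Two further (minor) remarks. First, the paper does not literally remove $G_0=\bigcup_{q>\be, q\in\Q} B\cap\proj_{\R^k}^{-1}(\{t:\dim B_t>q\})$; instead, for each $\ep>0$ it passes to a Borel set $A''_\ep\su\{t:\dim B_t\le\ga+\ep\}$ with $\hal(A\se A''_\ep)=0$, applies Theorem~\ref{thm2} with the bound $\ga+\ep$ to $B\cap(A''_\ep\times\R^n)$, and takes a countable union over rational $\ep$. This handles the Borel-measurability issues explicitly (the sets $\{t:\dim B_t\le q\}$ are only known to lie in the $\sigma$-algebra generated by analytic sets), which your sketch elides. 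Second, the case $\hal(\proj_{\R^k}B)=0$ needs to be split off: in that case $B$ itself is $\Ga_\al$-null and one can simply take $G=B\se B_1$ with $B_1$ as above.
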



Taking $\alpha=k$ and $T=\R^k$, Theorem~\ref{thm1} implies the following.

\begin{cor}
\label{corr}
Let $B \su \rr^k \times \rr^n$ be Borel such that $\dim(\proj_{\rr^k} B)=k$. 
Then there exists a Borel set $G \su B$ such that the following hold: 
\begin{itemize}
\item $G$ is $\Ga$-null, that is, for any $f: \rr^k \to \rr^n$ Lipschitz map, 
$\leb^{k}(t \in \rr^k: (t,f(t)) \in G)=0$.
\item $\dim (B \setminus G)= k + \essupp(\dim B_t)$, where 
$$\essupp(\dim B_t)= \sup\{q \geq 0: \leb^k(\{t \in \rr^k: \dim B_t > q \}) >0\}.$$ 
\end{itemize}
\end{cor}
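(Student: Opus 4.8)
The plan is to obtain Corollary~\ref{corr} as the special case $T=\R^k$, $\al=k$ of Theorem~\ref{thm1}; the only real content is to match the two formulations of ``smallness'' ($\Ga_k$-nullity versus Lipschitz graphs for $\leb^k$) and of the essential supremum ($\hau^k$ versus $\leb^k$). Two elementary observations underlie everything. First, $\R^k$ is Ahlfors--David $k$-regular in the sense of Definition~\ref{defdefahl}, since $\hau^k(\R^k\cap B(t,r))=\hau^k(B(0,r))=c_k r^k$ for all $t\in\R^k$ and $r>0$, where $c_k=\hau^k(B(0,1))\in(0,\infty)$; so \eqref{ahldef} holds with $c=c'=c_k$, and evidently $\R^k$ is Borel and (by hypothesis) $\dim\proj_{\R^k}B=k$. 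Second, on $\R^k$ the measures $\hau^k$ and $\leb^k$ are comparable up to a positive finite dimensional constant; in particular a set $S\su\R^k$ is $\hau^k$-null if and only if it is $\leb^k$-null, and similarly for positivity of measure.

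With these in hand, I would apply Theorem~\ref{thm1} to get a Borel $\Ga_k$-null set $G\su B$ with $\dim(B\se G)=k+\essup(\dim B_t)$ in the sense of Definition~\ref{d:ess-sup}, and then translate. The $\Ga_k$-nullity of $G$ means $\hau^k(\{t:(t,f(t))\in G\})=0$ for every Lipschitz $f:\R^k\to\R^n$, which by the equivalence of $\hau^k$- and $\leb^k$-nullity is exactly the first bullet point. For the second bullet, set $A=\proj_{\R^k}B$; if $\hau^k(A)>0$ (equivalently $\leb^k(A)>0$), then $\essup(\dim B_t)$ is given by \eqref{eqessup}, and replacing ``$\hau^k(\cdot)>0$'' by ``$\leb^k(\cdot)>0$'' throughout --- legitimate by the second observation --- turns it verbatim into $\essupp(\dim B_t)$, so $\dim(B\se G)=k+\essupp(\dim B_t)$ as wanted.

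The step I expect to be the main obstacle is the remaining case $\leb^k(A)=0$, which is still allowed since we only assume $\dim A=k$: there Definition~\ref{d:ess-sup} uses the limiting formula \eqref{essnull} and may return a positive value, whereas $\essupp(\dim B_t)=0$ because $\{t:\dim B_t>q\}\su A$ is $\leb^k$-null for every $q$, so Theorem~\ref{thm1} alone need not deliver the claimed equality. Here I would argue by hand: when $\hau^k(A)=0$, \emph{every} subset $S\su B\su A\x\R^n$ is $\Ga_k$-null, because $\{t:(t,f(t))\in S\}\su A$ for any Lipschitz $f$; moreover $\dim B\ge\dim A=k$ since orthogonal projection is $1$-Lipschitz. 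If $\dim B=k$ take $G=\emptyset$; if $\dim B>k$, the classical subset theorem for analytic sets yields a compact $K\su B$ with $0<\hau^k(K)<\infty$, and $G:=B\se K$ is then a Borel $\Ga_k$-null set with $\dim(B\se G)=\dim K=k=k+\essupp(\dim B_t)$. Once this case is disposed of the corollary follows; the entire argument is short, the only delicate point being this reconciliation of the two essential suprema when the base has Lebesgue measure zero.
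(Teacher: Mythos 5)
Your proof is correct and follows the same route as the paper — apply Theorem~\ref{thm1} with $T=\R^k$, $\alpha=k$, translating freely between $\hau^k$ and $\leb^k$ — but you have also spotted a genuine subtlety that the paper's one-line derivation (``Taking $\alpha=k$ and $T=\R^k$, Theorem~\ref{thm1} implies the following'') glosses over. The hypothesis only demands $\dim(\proj_{\R^k}B)=k$, so $\leb^k(\proj_{\R^k}B)=0$ is allowed; in that case Definition~\ref{d:ess-sup} falls back to the limiting formula \eqref{essnull}, and $\essup(\dim B_t)$ can be strictly positive even though the Corollary's Lebesgue-defined $\essupp(\dim B_t)$ vanishes. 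For instance, if $A\su\R^k$ is Borel with $\dim A=k$, $\hau^k(A)=0$, and $B=A\times[0,1]^n$, then $\essupep(\dim B_t)=n$ for every $\ep>0$ (since $\hau^{k-\ep}(A)=\infty$), so $\essup(\dim B_t)=n$ while $\essupp(\dim B_t)=0$; Theorem~\ref{thm1} would then produce a $G$ with $\dim(B\setminus G)=k+n$, not the $k$ the Corollary asks for. Your patch for this regime — observing that every subset of $B$ is then automatically $\Ga_k$-null, and invoking Howroyd's subset theorem to extract a compact $K\su B$ with $0<\hau^k(K)<\infty$ when $\dim B>k$ so that $G=B\setminus K$ works — is correct and is exactly the extra step needed to make the Corollary follow as stated.
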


In order to prove Theorem \ref{thm1} we will use Theorem \ref{classic} and  the following.

\begin{theorem}
\label{thm2}
Let $0<\alpha\le k$ and let $T\subset \mathbb{R}^k$ be an Ahlfors--David $\alpha$-regular set. Let $B\subset T\times \R^n$ be a Borel set
and assume that $0 < \be \leq n$ is a real number such that $\dim B_t \leq \be$ for all $t \in T$. 
Then there exists a $\Ga_{\al}$-null Borel set $G \su B$ such that $\dim (B \setminus G) \leq \al + \be$.
\end{theorem}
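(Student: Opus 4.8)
The plan is to prove Theorem~\ref{thm2} by a covering argument at each dyadic scale, carefully balancing how much of $B$ we cover ``efficiently'' (contributing to the $\al+\be$ bound) against how much we must throw into the exceptional set $G$. First I would fix a small $\eta>0$ and aim to find a $\Ga_\al$-null Borel set $G$ with $\dim(B\setminus G)\le \al+\be+\eta$; taking a countable intersection over $\eta\to 0$ (using Remark~\ref{gacup} that $\Ga_\al$-null sets form a $\sigma$-ideal) then gives the sharp bound. The key structural input is the Ahlfors--David regularity of $T$: for each scale $\delta = 2^{-j}$, the regular set $T$ can be covered by roughly $\delta^{-\al}$ balls of radius $\delta$, with bounded overlap, and crucially any Lipschitz graph meets only a controlled number of these balls, which is what will let us certify $\Ga_\al$-nullness of the leftover pieces.

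Next I would set up the dichotomy on sections. Since $\dim B_t\le \be$ for every $t\in T$, for each $t$ and each scale $\delta$ the section $B_t$ can be covered by a family of balls of radius $\delta$ whose $(\be+\eta/2)$-sum is small; but we need this quantitatively and uniformly, which is exactly where Lemma~\ref{meas} and Lemma~\ref{lem1} (the two lemmas the excerpt says the proof is ``subject to'') must be doing the work. I expect Lemma~\ref{meas} to be a measurability statement ensuring that the relevant ``bad scale'' functions $t\mapsto(\text{first }\delta\text{ at which }B_t\text{ is not yet efficiently covered})$ are Borel, so that the sets we carve out of $B$ are Borel; and I expect Lemma~\ref{lem1} to be the combinatorial heart, saying something like: given a Borel set all of whose sections have dimension $\le\be$, and given a target precision, one can find a Borel decomposition $B = (\bigcup_m B^m) \cup G$ where each $B^m$ admits, at a single scale $\delta_m$, a cover by ``tubes'' (products of a $\delta_m$-ball in $T$ with a small section-cover) whose total $(\al+\be+\eta)$-content is controlled, while $G$ is the set of $(t,x)$ that escape to finer and finer scales --- and $G$ is $\Ga_\al$-null precisely because on any Lipschitz graph the escaping set has, at each scale, density bounded by the section-covering number, which is $o(\delta^{-?})$ relative to the $\al$-dimensional measure along the graph.

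Then I would assemble the dimension bound: writing $B\setminus G=\bigcup_m B^m$, it suffices to bound $\hau^{\al+\be+\eta}(B^m)$ for each $m$. For a fixed $m$ and its scale $\delta$, cover $\proj_{\rr^k}B^m\subset T$ by $\lesssim \delta^{-\al}$ balls $B(t_i,\delta)$ (AD-regularity), and over each such ball cover the relevant part of the section by balls of radius $\delta$ whose $(\be+\eta/2)$-sum is $\le C_m$; the product cover of $B^m$ then has $(\al+\be+\eta)$-premeasure $\lesssim \delta^{-\al}\cdot \delta^\al\cdot C_m \cdot \delta^{\eta/2} = C_m\,\delta^{\eta/2}\to 0$ as we refine, provided the construction lets $\delta\to 0$ along $B^m$ (or is arranged so that the tails are summable). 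This is a routine ``product of covers, count using regularity'' estimate once the decomposition is in hand; the only care needed is tracking the $\delta^{\eta/2}$ gain that makes the geometric-type sum converge, and the fact that we may need countably many scales $m$, handled by $\sigma$-subadditivity of $\hau^{\al+\be+\eta}$.

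The main obstacle, and the part I would expect to be genuinely hard, is constructing the decomposition so that $G$ is simultaneously (i) Borel, (ii) $\Ga_\al$-null, and (iii) ``everything that is not efficiently coverable has been swept into it'' --- in particular proving the $\Ga_\al$-nullness. The delicate point is that $\Ga_\al$-nullness must be verified against \emph{every} Lipschitz $f:\rr^k\to\rr^n$, so the construction of $G$ cannot depend on $f$; one must argue that the escaping set $G$ has the property that, on an arbitrary Lipschitz graph $\{(t,f(t))\}$, at every small scale $\delta$ the set $\{t: (t,f(t))\in G\}$ is contained in $\lesssim \delta^{-\al}\cdot(\text{section covering number at scale }\delta)$ many $\delta$-balls with the covering number being $\delta^{-\be-o(1)}$ --- wait, that is too many --- so in fact the right statement must be subtler: $G$ should be built so that for each $t$, the fiber $G\cap(\{t\}\times\rr^n)$ is ``spread out over all fine scales'' in a way that forces, on any one Lipschitz graph (which pins down a single point $f(t)$ per $t$), the set of surviving $t$'s to have $\hau^\al$-measure zero by a Borel--Cantelli / density argument. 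Pinning down exactly this mechanism --- presumably the content of Lemma~\ref{lem1} --- is where the real proof lies, and I would focus my effort there, using AD-regularity of $T$ to get the Vitali-type covering and mass-distribution estimates that make the Borel--Cantelli step go through.
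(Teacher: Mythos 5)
Your high-level plan correctly identifies the architecture of the paper's proof: Lemma~\ref{meas} is indeed a measurable-selection result, Lemma~\ref{lem1} is indeed the combinatorial heart, and the final assembly is indeed a $\limsup$/Borel--Cantelli argument with $G$ the set of points that eventually escape the ``good'' pieces. You also rightly sense that the delicate issue is $\Gamma_\al$-nullness and that $G$ must be built without reference to any particular Lipschitz $f$.

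However, the step you call ``routine'' --- cover $\proj_{\R^k}B^m$ by $\lesssim\de^{-\al}$ balls $B(t_i,\de)$, cover each section by $\de$-balls with small $\be$-sum, and take products --- has a genuine gap: the sections $B_t$ vary with $t$, and a $\de$-cover of $B_{t_i}$ need not cover $B_t$ for nearby $t$; in fact $\bigcup_{|t-t_i|\le\de}B_t$ can have enormous covering number even when every $B_t$ is a single point (this is Theorem~\ref{fctn}). So the proposed product cover is neither a cover of $B^m$ nor a source of a premeasure bound, and AD-regularity alone does not rescue it. The paper circumvents this by attaching to each selected cube $Q=\{t\}\times D$ a $\rho$-adapted cylinder $C_Q=\ol{B}(t,s_Q)\times(x_Q+3\rho s_Q[-1,1]^n)$ and retaining only the \emph{good} cylinders, i.e.\ those with $\int_{A\cap\ol{B}(y_C,r_C)}\sum_{Q'\in\iq^{t'},\,Q'\su C}s_{Q'}^{\be}\,d\hal(t')\ge r_C^{\al+\be+\eps}$ (Definition~\ref{defci}); it is precisely this integral condition that replaces your naive count and makes the $5r$-covering estimate close. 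Relatedly, your proposed mechanism for $\Gamma_\al$-nullness (``any Lipschitz graph meets only a controlled number of these balls'') cannot work as stated, since a Lipschitz graph projects onto all of $T$ and meets every ball of the horizontal cover. The actual mechanism in Lemma~\ref{lem1} is a density dichotomy on the base: Besicovitch's covering theorem bounds the $\hal$-measure of the set of $y$ at which $A_l$ has low density in $\ol{B}(y,2^{-l})$, while at high-density points the AD lower bound \eqref{ahl} forces $C_y$ to be good, so the points of $B\se\ti B$ on the graph of $f$ come only from the low-density sets and a summable tail of scales. You correctly flagged that something ``subtler'' was needed but did not reach this density argument, which is where the Ahlfors--David lower bound is actually used and which is the crux that Lemma~\ref{lem1} supplies.
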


In Section \ref{pfthm2} we prove Theorem~\ref{thm2} subject to two lemmas and we show how the combination of Theorem \ref{classic} and Theorem \ref{thm2} implies Theorem \ref{thm1}.



In Section~\ref{s:counter-examples} we show that the Ahlfors--David regularity assumption cannot be dropped in Theorems \ref{thm1} and \ref{thm2}.

\section{Applications to unions of affine subspaces and projection theorems}
\label{appl}

\subsection{Results for unions of affine subspaces}
\label{unions}
In this section we present Fubini-type results for the Hausdorff dimension of unions of affine subspaces as applications of Theorem \ref{thm1}. 
First we introduce some notations. 

Let $2 \leq m$, $1 \leq k <m$ be integers. Recall that $A(m,k)$ denotes the space of all $k$-dimensional affine subspaces of $\rr^m$. 

\begin{defin}
\label{nonvert}
Let $V \in A(m,m-k)$. We say that $P \in A(m,k)$ is 
\emph{non-vertical with respect to $V$}
if the orthogonal projection of $P$ to $V^{\perp}$ is equal to $V^{\perp}$. 

In the case $V=\{0\}\times \rr^{m-k}$ we say simply that $P \in A(m,k)$ is \emph{non-vertical}. 
\end{defin}

The following is an immediate corollary of Theorem \ref{thm1}.
\begin{cor}
\label{corunion}
Fix a non-empty  collection $E \su A(m,k)$ 
of  non-vertical affine subspaces such that 
$B=\bigcup_{P \in E} P \su \rr^m$ is 
Borel, 
and let $\be=\essupk(\dim B_t)$.  

Then for each $P \in E$, there exists $P' \su P$ with $\hau^k(P \setminus P')=0$ such that for 
 $B'=\bigcup_{P \in E} P' \su B$, we have $\dim B' = \be+k$.  
\end{cor}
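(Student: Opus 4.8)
The plan is to deduce Corollary~\ref{corunion} directly from Theorem~\ref{thm1} applied with $\alpha=k$ and $T=\rr^k$ (equivalently, from Corollary~\ref{corr}), after checking that the relevant projection hypothesis holds and that $\Gamma_k$-nullness transfers to the individual planes. First I would observe that since each $P\in E$ is a non-vertical $k$-dimensional affine subspace, the orthogonal projection $\proj_{\rr^k}$ restricted to $P$ is a bijective affine map onto $\rr^k$; hence $\proj_{\rr^k}B=\rr^k$ and in particular $\dim\proj_{\rr^k}B=k$, so Corollary~\ref{corr} applies to $B$. It gives a $\Gamma_k$-null Borel set $G\subset B$ with $\dim(B\setminus G)=k+\essupk(\dim B_t)=k+\beta$, where I use that the essential supremum here is taken with respect to $\leb^k$ because $\alpha=k$.

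Next I would unwind what $\Gamma_k$-nullness of $G$ says about each individual plane $P$. Since $P$ is non-vertical, it is the graph of an affine (hence Lipschitz) map $f_P:\rr^k\to\rr^n$, namely $f_P=\pi_P\circ(\proj_{\rr^k}|_P)^{-1}$ where $\pi_P$ is the projection onto the last $n$ coordinates. By the definition of $\Gamma_k$-nullness applied to $f_P$, the set $\{t\in\rr^k:(t,f_P(t))\in G\}=\{t:(t,f_P(t))\in G\cap P\}$ has $\leb^k$-measure zero. Transporting this back through the bi-Lipschitz affine isomorphism $\proj_{\rr^k}|_P:P\to\rr^k$ shows that $G\cap P$ has $\hau^k$-measure zero in $P$. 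So setting $P'=P\setminus G$ we get $\hau^k(P\setminus P')=\hau^k(G\cap P)=0$ for every $P\in E$, as required.

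Finally I would verify $\dim B'=\beta+k$ for $B'=\bigcup_{P\in E}P'$. By construction $B'=\bigcup_{P\in E}(P\setminus G)\subset B\setminus G$, so $\dim B'\le\dim(B\setminus G)=\beta+k$. For the reverse inequality, note $B'=\bigcup_{P\in E}(P\cap(B\setminus G))$, and since $B=\bigcup_{P\in E}P$ we have $B\setminus G=\bigcup_{P\in E}(P\setminus G)=B'$; thus in fact $B'=B\setminus G$ and the equality $\dim B'=\beta+k$ is immediate. (One should double-check the harmless edge cases: if $\beta+k$ exceeds the ambient dimension $m$ this cannot occur since each $P\setminus G$ has dimension at most $k\le m$ and the essential supremum of section dimensions is at most $m-k$; and the definition of $\essupk(\dim B_t)$ via $\leb^k$ matches the $\hau^k$ version used in Theorem~\ref{thm1} because $T=\rr^k$ and $\hau^k$ is comparable to $\leb^k$.)

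The only genuinely substantive point — and hence the ``main obstacle,'' though it is quite mild — is the bookkeeping in the second paragraph: one must be careful that $\Gamma_k$-nullness is phrased in terms of \emph{graphs over $\rr^k$}, and that a non-vertical affine plane is indeed such a graph with Lipschitz (affine) fibre map, so that the definition applies verbatim; and that the bi-Lipschitz change of variables between $P$ and $\rr^k$ preserves $\hau^k$-null sets. Everything else is formal manipulation of the already-established Theorem~\ref{thm1}.
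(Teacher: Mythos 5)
Your proposal is correct and follows essentially the same route as the paper: apply Corollary~\ref{corr} (i.e., Theorem~\ref{thm1} with $\alpha=k$, $T=\rr^k$) after observing $\proj_{\rr^k}B=\rr^k$, note that $\Gamma_k$-nullness of $G$ gives $\hau^k(G\cap P)=0$ for every non-vertical $P$ (each being a Lipschitz graph over $\rr^k$), and set $P'=P\setminus G$. The paper's proof is just a terser version of your argument; your extra bookkeeping about the bi-Lipschitz change of variables and the observation that $B'=B\setminus G$ exactly are accurate but implicit in the original.
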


\begin{proof}
We apply Corollary \ref{corr} for $k$ and $n=m-k$, $B=\bigcup_{P \in E} P \su \rr^m$. 
Clearly, $\proj_{\rr^k} B = \rr^k$. 
We obtain a $\Ga$-null set $G \su B$ such that 
$\dim (B \setminus G)= k + \be$. 
Since $G$ is $\Ga$-null, we have $\hau^k(G \cap P)=0$ for each $P \in E$, thus taking $P'=P \se G$ completes the proof. 
\end{proof}

For simplicity, we introduce the following notation. 

\begin{defin}
\label{defir}
For $E \su A(m,k)$ let 
\begin{align}
r(E)=\inf\{\dim B'-k: \ B' \su \rr^m \ \text{such that} \ \hau^k(B' \cap P) > 0 \ \text{for each} \ P \in E \}. 
\label{eqir}
\end{align}
\end{defin}

We prove the following Fubini-type result about the Hausdorff dimension of unions of affine subspaces, 
which is a consequence of Theorem \ref{thm2}. 

\begin{cor}
\label{corf}
Fix a non-empty collection $E \su A(m,k)$ 
of non-vertical affine subspaces such that 
$B=\bigcup_{P \in E} P \su \rr^m$ is Borel. 
Then for $\leb^k$-almost all $t \in \rr^k$, $\dim B_t \geq r(E)$, where $r(E)$ is from \eqref{eqir}. 
\end{cor}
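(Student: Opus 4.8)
\textbf{Proof proposal for Corollary~\ref{corf}.}

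The plan is to deduce this from Theorem~\ref{thm2} by an argument contradicting minimality of $r(E)$. Fix $E$ and $B=\bigcup_{P\in E}P$ as in the statement, and suppose for contradiction that the set $S=\{t\in\rr^k:\dim B_t < r(E)\}$ has positive Lebesgue measure. We want to produce a Borel set $B'\su\rr^m$ with $\hau^k(B'\cap P)>0$ for every $P\in E$ but with $\dim B' - k < r(E)$, contradicting the definition \eqref{eqir} of $r(E)$. The key point is that removing a $\Ga$-null set from $B$ still leaves each affine plane $P\in E$ with full $\hau^k$-measure, since every non-vertical $P\in A(m,k)$ is (an affine image of) a Lipschitz graph over $\rr^k$.

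First I would reduce to a set of sections of uniformly bounded dimension: write $S = \bigcup_{j} S_j$ where $S_j=\{t:\dim B_t \le r(E)-1/j\}$; since $\leb^k(S)>0$, some $S_j$ has positive measure, and (passing to a subset) we may assume $S_j$ is Borel with $0<\leb^k(S_j)<\infty$. Set $\be = r(E)-1/j$ (if $\be\le 0$ the argument is easier, or one intersects with a ball to make dimensions positive; I would handle $\be>0$ as the main case). Now restrict attention to the part of $B$ lying over $S_j$: let $\widetilde B = B \cap (S_j\times\rr^n)$. Every vertical section $\widetilde B_t$ has $\dim \widetilde B_t \le \be$ for $t\in S_j$. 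To apply Theorem~\ref{thm2} I need $S_j$ (or a further positive-measure Borel subset of it) to be Ahlfors--David $\al$-regular for the appropriate $\al$. Since $\leb^k(S_j)>0$, by the Lebesgue density theorem $S_j$ has density $1$ at a.e.\ of its points; one can then extract a compact positive-measure subset $T\su S_j$ that is Ahlfors--David $k$-regular — for instance, a set of the form $T = \bigcap_N T_N$ obtained by a standard Cantor-type construction removing the "bad-density" parts at scales going to $0$, which keeps positive measure and enforces $c r^k \le \leb^k(T\cap B(t,r))\le c' r^k$. (Alternatively, and more simply, one can take $T$ to be a suitably fine dyadic-cube approximation; any positive-measure Ahlfors--David $k$-regular subset of $S_j$ suffices.)

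Having fixed such $T$, set $B^* = B\cap(T\times\rr^n)$, a Borel set with $\proj_{\rr^k}B^* \su T$, with $\dim\proj_{\rr^k}B^* = k$ (it has positive measure), and with $\dim B^*_t \le \be$ for all $t\in T$. Apply Theorem~\ref{thm2} with this $T$, $\al=k$, and this $\be$: we obtain a $\Ga_k$-null Borel set $G\su B^*$ with $\dim(B^*\setminus G)\le k+\be = k + r(E) - 1/j < k + r(E)$. Now I claim $B' := B^*\setminus G$ witnesses a value strictly below $r(E)$ in \eqref{eqir}, provided $\hau^k(B'\cap P)>0$ for every $P\in E$. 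This is where the non-verticality and the choice of $T$ enter: fix $P\in E$; since $P$ is non-vertical it is the graph of an affine (hence Lipschitz) map $g:\rr^k\to\rr^n$, so $\hau^k(P\cap G)=0$ because $G$ is $\Ga_k$-null; and $\hau^k(P\cap B^*) = \hau^k(P\cap(T\times\rr^n))$ is comparable to $\leb^k(\proj_{\rr^k}(P\cap(T\times\rr^n)))$, which is positive because $\proj_{\rr^k}(P) = \rr^k$ and $\leb^k(T)>0$. Hence $\hau^k(P\cap B') = \hau^k(P\cap B^*) > 0$. This contradicts the infimum defining $r(E)$, so no such $S$ of positive measure exists, i.e.\ $\dim B_t \ge r(E)$ for $\leb^k$-a.e.\ $t$.

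The main obstacle is the extraction of the Ahlfors--David $k$-regular positive-measure subset $T\su S_j$; one must be careful that this can always be done inside an arbitrary positive-measure Borel set, but this is a standard density/Vitali-covering construction and does not depend on anything delicate about $B$. A secondary technical point to watch is the case $\be \le 0$ (sections of dimension zero) and the degenerate possibility that some sections are empty — these are handled by replacing $B_t$ with $B_t$ intersected with a large ball, or by noting $r(E)\ge 0$ always, so the interesting case is $\be>0$. Everything else is a direct bookkeeping application of Theorem~\ref{thm2} together with the observation that non-vertical affine subspaces are Lipschitz graphs, so $\Ga_k$-null sets are negligible on each of them.
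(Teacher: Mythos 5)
Your overall strategy is the same as the paper's: argue by contradiction against the infimum in \eqref{eqir}, restrict $B$ to a positive-measure compact set of ``bad'' $t$'s where sections have dimension $\le r(E)-\ep$, apply Theorem~\ref{thm2} to peel off a $\Gamma_k$-null set $G$, and use non-verticality to see that $\hau^k(P\cap G)=0$ for each $P\in E$, so the remainder still meets each plane in positive $\hau^k$-measure. This part is right.

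However, you insert a step that is both unnecessary and not actually justified: the extraction of a compact, positive-measure, Ahlfors--David $k$-regular subset $T$ of $S_j$. You call this ``a standard density/Vitali-covering construction,'' but the Lebesgue density theorem only gives that the density of $S_j$ tends to $1$ at a.e.\ point; it does not give a \emph{uniform} lower bound $c\,r^k\le\leb^k(T\cap B(t,r))$ simultaneously for all $t\in T$ and all $r\le 1$, which is what Ahlfors--David $k$-regularity of $T$ requires (note that the bound must hold for the measure of $T$ itself, not of $S_j$). Proving that every positive-measure Borel set contains a positive-measure Ahlfors-regular compact is a nontrivial claim in its own right and would need its own argument; you cannot simply wave it through. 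The paper avoids this entirely by taking $T=\R^k$, which is trivially Ahlfors--David $k$-regular. The point you missed is that in Theorem~\ref{thm2} the hypothesis is ``$\dim B_t\le\beta$ for all $t\in T$,'' and after replacing $B$ by $B'=B\cap(A'\times\R^{m-k})$ (with $A'$ a compact positive-measure subset of $\{t:\dim B_t< r-\ep\}$), every section $B'_t$ either lies over $A'$ (so has dimension $<\beta$) or is empty; hence the hypothesis holds for every $t\in\R^k$, and Theorem~\ref{thm2} applies directly with $T=\R^k$, $\alpha=k$, $\beta=r-\ep$. With that simplification the rest of your argument closes up exactly as the paper's does.
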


\begin{proof}
Fix $E$ with the above properties and let $r=r(E)$. 
Assume on contrary that $\leb^k(t \in \rr^k: \dim B_t < r) >0$. Then there exists $\ep>0$ such that 
$\leb^k(t \in \rr^k: \dim B_t < r-\ep) >0$.
Note that these sets are measurable: they are in the $\sigma$-algebra generated by analytic sets (see \cite{De} or \cite[Remark 6.2.]{MM}). 
Therefore we can choose a compact set $A' \su \{t\in \rr^k: \dim B_t < r-\ep\}$ with $\leb^k(A')>0$. Let 
$B'=B \cap (A' \times \rr^{m-k})$. 
Clearly, $\hau^k(B' \cap P) > 0$ for each $P \in E$, and $B'$ is Borel. 
We apply Theorem \ref{thm2} for $B'$ and  obtain a $\Ga_k$-null set $G' \su B'$ such that 
$\dim (B' \se G') \leq k + r - \ep$. Then $\hau^k(G' \cap P) = 0$ for each $P \in E$, 
thus $\hau^k((B'\se G') \cap P) > 0$ for each $P \in E$. By the definition of $r=r(E)$ (see \eqref{eqir}), this implies that 
$$ k + r \leq \dim (B' \se G') \leq k + r - \ep,$$
which is a contradiction and we are done. 
\end{proof}

Now we list some results aiming to find lower bounds for $r(E)$  (see \eqref{eqir}) for $E \su A(m,k)$. As these results will involve the Hausdorff dimension of the collection $E$, we should define a metric on $A(m,k)$.

For $P_i=V_i + a_i \in A(m,k)$, where $V_i \in G(m,k)$ and $a_i \in V_i^{\perp}$, $i=1,2$, we put 
$$\de(P_1,P_2)=\|\proj_{V_1}-\proj_{V_2}\| + |a_1-a_2|$$ 
where $\| \cdot \|$ denotes the standard operator norm. 
Then $\de$ is a metric on $A(m,k)$, see \cite[p. 53]{Ma}.  

\begin{defin}
Let $\rho$ be a metric on $A(m,k)$. We say that $\rho$ is a \emph{natural metric} if 
$\rho$ and $\de$ are Lipschitz equivalent, that is, if  
there exist positive constants $K_1$ and $K_2$ such that, for every $P_1,P_2 \in A(m,k)$,
$K_1 \cdot \de(P_1,P_2)\leq \rho (P_1,P_2) \leq K_2 \cdot \de(P_1,P_2).$
\end{defin}

The following theorem was proved by Falconer and Mattila in \cite{FM} for $k=m-1$, and by the authors in \cite{HKM} for arbitrary $0 < k \leq m-1$. 

\begin{theorem}[Falconer--Mattila, H\'era--Keleti--M\'ath\'e]
\label{hkmthm}
Let $1 \leq k <m$ be integers, fix a natural metric on $A(m,k)$, and let  $E \su A(m,k)$ be nonempty. 
If $B \su \rr^m$ satisfies $\hau^k(B \cap P) > 0$ for each $P \in E$, then 
$$ \dim  B \geq k+\min\{\dim E,1\}.$$

Moreover, if $\dim E  \in [0,1]$ and $B \su \bigcup_{P \in E} P$ with $\hau^k(B \cap P) > 0$ for each $P \in E$, then 
$$\dim B = k + \dim E.$$
\end{theorem}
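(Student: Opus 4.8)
\textbf{Proof proposal for Theorem~\ref{hkmthm}.}

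The plan is to reduce both statements to the Fubini-type machinery developed above, specifically Corollary~\ref{corf} together with Theorem~\ref{hkmthm}'s hypothesis that $\hau^k(B\cap P)>0$ for each $P\in E$. For the first (lower bound) assertion, I would first observe that the quantity $r(E)$ from \eqref{eqir} is designed exactly to capture the content of this theorem: by definition $r(E)=\inf\{\dim B'-k : \hau^k(B'\cap P)>0\ \forall P\in E\}$, so the claim $\dim B\ge k+\min\{\dim E,1\}$ is equivalent to the inequality $r(E)\ge\min\{\dim E,1\}$. The first step, then, is to establish this inequality $r(E)\ge\min\{\dim E,1\}$ as the core geometric estimate. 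This is where the actual work lies; I expect the argument to proceed by a standard duality/slicing reduction: after a rotation one may assume (at the cost of splitting $E$ into finitely many pieces, which only decreases $\dim E$ on one piece by an arbitrarily small amount) that all planes in a positive-dimensional subcollection are non-vertical, identify $A(m,k)$-coordinates with a Euclidean parameter space, and then use a projection/potential-theoretic argument — an energy integral over $E$ against $B'$, or equivalently a Fubini computation over the parameter $t$ — to show that if $\dim B'<k+\min\{\dim E,1\}$ then for a positive-measure set of directions the slice $B'_t$ would be too small to meet all the relevant planes in positive $\hau^k$-measure. For $k=m-1$ this is the Falconer--Mattila argument; for general $k$ it is the one from \cite{HKM}, and in the write-up I would cite those sources for the detailed estimate rather than reproduce it.

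For the second (equality) assertion, assume $\dim E\in[0,1]$ and $B\subset\bigcup_{P\in E}P$ with $\hau^k(B\cap P)>0$ for every $P\in E$. The lower bound $\dim B\ge k+\dim E$ is immediate from the first part since $\min\{\dim E,1\}=\dim E$ here. For the matching upper bound $\dim B\le k+\dim E$, the idea is to apply Corollary~\ref{corf} (equivalently Theorem~\ref{thm2}) to a Borel set $\tilde B\supset B$ of the form $\bigcup_{P\in E}P$ (or, if $E$ is not Borel, to a Borel hull chosen carefully so that the union is Borel — one can take $E$ compact or $K_\sigma$ by an exhaustion argument, since we only need a lower bound on $\dim E$). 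By Corollary~\ref{corf}, for $\leb^k$-a.e.\ $t\in\rr^k$ we have $\dim \tilde B_t\ge r(E)\ge\dim E$; combined with Theorem~\ref{classic} this gives $\dim\tilde B\ge k+\dim E$, but that is the wrong direction. So instead, for the upper bound I would argue directly: each slice $\tilde B_t=(\bigcup_{P\in E}P)_t$ is a union, over those $P\in E$ whose fixed ``horizontal level'' passes through $t$, of $0$-dimensional pieces (single points, since $P$ is $k$-dimensional and non-vertical so meets each vertical fiber in one point), so that $\tilde B_t$ is essentially a Lipschitz-parametrized image of a subset of $E$ and hence $\dim\tilde B_t\le\dim E$ for every $t$. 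Then Theorem~\ref{thm2} (with $\be=\dim E$, using that $T=\rr^k$ is Ahlfors--David $k$-regular) produces a $\Ga_k$-null Borel $G\subset\tilde B$ with $\dim(\tilde B\setminus G)\le k+\dim E$; but since $G$ is $\Ga_k$-null and each $P$ is a Lipschitz graph over $\rr^k$, $\hau^k(G\cap P)=0$, whence $\hau^k((\tilde B\setminus G)\cap P)>0$ for every $P$, and now the definition of $r(E)$ gives $k+r(E)\le\dim(\tilde B\setminus G)\le k+\dim E$, i.e.\ $r(E)\le\dim E$. Together with $r(E)\ge\dim E$ from part one this pins down $r(E)=\dim E$ and, retracing, $\dim B=\dim(\tilde B\setminus G)=k+\dim E$ (noting $B\setminus G$ still meets every $P$, so $\dim B\ge k+r(E)=k+\dim E$, while $\dim B\le\dim\tilde B\le\dim(\tilde B\setminus G)=k+\dim E$ needs care — more cleanly, apply Theorem~\ref{thm2} to $B$ itself if $B$ is Borel).

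The main obstacle is the core inequality $r(E)\ge\min\{\dim E,1\}$ in the first part — this is the genuinely hard, self-contained piece of geometric measure theory (the Falconer--Mattila / H\'era--Keleti--M\'ath\'e estimate), and everything else is bookkeeping around Corollary~\ref{corf}, Theorem~\ref{thm2}, and Theorem~\ref{classic}. A secondary technical nuisance will be the measurability and Borel-hull issues: ensuring that $\bigcup_{P\in E}P$ can be taken Borel (or replaced by a Borel superset meeting each $P$ in positive measure without enlarging $\dim E$), and that the slices $B_t$ are in the $\sigma$-algebra on which $\essupk(\dim B_t)$ is well-defined — but the excerpt already flags the right references (\cite{De}, \cite[Remark~6.2]{MM}) for exactly this point, so I would invoke those. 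I would present part one by reducing to the cited theorems' proofs and part two as the duality argument above, keeping the slice-dimension bound $\dim\tilde B_t\le\dim E$ explicit since it is the one place where the non-verticality hypothesis is essential.
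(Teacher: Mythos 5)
The paper does not prove Theorem~\ref{hkmthm}; it is quoted from \cite{FM} (for $k=m-1$) and \cite{HKM} (general $k$), so there is no ``paper proof'' to compare against. Your decision to cite those sources for the first inequality (which is indeed the genuine geometric-measure content) is the right call and matches what the paper does.

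Your derivation of the equality in the second part has a real gap, however. You apply Theorem~\ref{thm2} (or Corollary~\ref{corf}) to $\tilde B=\bigcup_{P\in E}P$ (or to $B$) and obtain a $\Ga_k$-null set $G$ with $\dim(\tilde B\setminus G)\le k+\dim E$. But removing $G$ only bounds $\dim(\tilde B\setminus G)$; it says nothing about $\dim G$, and $\dim\tilde B=\max\{\dim(\tilde B\setminus G),\dim G\}$. Being $\Ga_k$-null does not force $G$ to be low-dimensional --- for instance the graph of a nowhere-Lipschitz function can be $\Ga_k$-null while having full dimension $k+n$, which is exactly the phenomenon of Theorem~\ref{fctn} that motivates the whole paper. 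So ``apply Theorem~\ref{thm2} to $B$ itself if $B$ is Borel'' does not repair this; you would still need a separate upper bound on $\dim G$, and the only way to get it is the direct covering estimate that makes the detour through Theorem~\ref{thm2} superfluous. The correct (and standard) route for the equality is: the lower bound $\dim B\ge k+\dim E$ is immediate from the first assertion when $\dim E\le 1$, and the upper bound $\dim B\le\dim\bigl(\bigcup_{P\in E}P\bigr)\le k+\dim E$ is the elementary covering estimate stated in the remark right after the theorem (and proved in \cite{HKM}): covering $E$ by sets of small diameter and thickening each by a plank of comparable width gives $\hau^{k+s}$-null $\bigcup_{P\in E}P$ for every $s>\dim E$. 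This needs none of the Fubini machinery, and, unlike your slice argument, also does not require the non-verticality hypothesis that the theorem statement never imposes. As a smaller point: even if you do restrict to non-vertical planes, the map $P\mapsto P\cap(\{t\}\times\R^{m-k})$ is only Lipschitz after bounding away from verticality, so you would need an exhaustion of $E$ there too.
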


This means that if $E \su A(m,k)$ with $\dim E \leq 1$, then  $r(E)=\dim E$ (see \eqref{eqir}).

\begin{remark}
Note that the inequality $\dim \left(\cup_{P \in E} P \right) \leq k +\dim E$ holds for every collection $E$ (see e.g. \cite{HKM}). However, equality can fail when $\dim E>1$.
%
\end{remark}

In \cite{H}, the first author gave another estimate for the dimension of sets of the above type. 

\begin{theorem}[H\'era]
\label{hthm}
Let $1 \leq k <m$ be integers, fix a natural metric on $A(m,k)$, and let $E \su A(m,k)$ be nonempty. 
If $B \su \rr^m$ such that $\hau^k(B \cap P) > 0$ for each $P \in E$, then 
$$\dim  B \geq k+\frac{\dim E}{k+1}.$$

In other words, for any $E \su A(m,k)$ we have $r(E) \geq \frac{\dim E}{k+1}$ (see \eqref{eqir}). 
\end{theorem}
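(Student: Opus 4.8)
The plan is to reformulate the claim as the upper bound $\dim E \le (k+1)(\dim B - k)$ (rearranging this gives exactly $\dim B \ge k + \frac{\dim E}{k+1}$) and to prove it by a $\delta$-discretized incidence count between the affine subspaces in $E$ and a nearly optimal $\delta$-covering of $B$.

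\emph{Reductions.} I would first carry out the usual normalizations. Since any $k$-plane in $\R^m$ is non-vertical with respect to at least one of the finitely many coordinate $k$-subspaces, one may write $E$ as a finite union and pass to a piece of full dimension on which every $P$ is non-vertical with respect to a fixed copy of $\R^k$; a further pigeonholing makes the bi-Lipschitz constant of $\proj_{\R^k}|_P$ uniformly bounded over the retained family. Intersecting $B$ with a large ball (countable stability of $\dim$) we may assume $B$ bounded, and a last pigeonholing on the value of $\hau^k(B\cap P)$ and on the scale at which $\hau^k_\delta(B\cap P)$ is already comparable to it gives a constant $c>0$ with $\hau^k_\delta(B\cap P)\ge c$ for all $P$ in the (still full-dimensional) family $E$ and all small $\delta$. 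Fix also a Frostman measure $\mu$ on $E$ with $\mu(B(x,r))\lesssim r^{\dim E-\varepsilon}$.

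\emph{The incidence count.} Work along a sequence of scales $\delta\to 0$ on which $B$ admits a covering $\mathcal{D}$ by $\delta$-cubes with $|\mathcal{D}|\lesssim \delta^{-\dim B-\varepsilon}$; extracting a single dyadic scale from an $\hau^{\dim B+\varepsilon'}$-null covering is the standard device that makes this work with the Hausdorff, not merely the lower box, dimension of $B$, and it forces one more pigeonholing of $E$ and of the scale, which I would do at the outset. Let $\mathcal{P}\subseteq E$ be a maximal $\delta$-separated set, so $|\mathcal{P}|\gtrsim\delta^{-\dim E+\varepsilon}$. Since $\proj_{\R^k}(B\cap P)$ has $\leb^k$-measure bounded below (by the uniformization), each $P\in\mathcal{P}$ meets $\gtrsim\delta^{-k}$ cubes of $\mathcal{D}$, so the number of incidences $\mathcal{I}=\#\{(P,Q)\in\mathcal{P}\times\mathcal{D}:\ Q\text{ meets the }\delta\text{-neighbourhood of }B\cap P\}$ satisfies $\mathcal{I}\gtrsim|\mathcal{P}|\,\delta^{-k}\gtrsim\delta^{-\dim E-k+\varepsilon}$. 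On the other hand $\mathcal{I}=\sum_{Q\in\mathcal{D}}A(Q)\le|\mathcal{D}|\cdot\max_Q A(Q)$, where $A(Q)$ counts the planes of $\mathcal{P}$ passing within $O(\delta)$ of $Q$.

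\emph{The bush estimate, which I expect to be the main obstacle.} The crux is to bound $\max_Q A(Q)$. The geometric heart of the matter should be the following: a family of $A$ $\delta$-separated $k$-planes all passing within $O(\delta)$ of a common $\delta$-cube, each of which meets $B$ in $\gtrsim\delta^{-k}$ cubes of $\mathcal{D}$, must together meet $\gtrsim A^{1/k}\,\delta^{-k}$ cubes of $\mathcal{D}$; hence $A^{1/k}\delta^{-k}\lesssim|\mathcal{D}|\lesssim\delta^{-\dim B-\varepsilon}$ and so $A(Q)\lesssim\delta^{-k(\dim B-k)-k\varepsilon}$. Combining with the two displays above gives $\delta^{-\dim E-k+\varepsilon}\lesssim\delta^{-\dim B-\varepsilon}\cdot\delta^{-k(\dim B-k)-k\varepsilon}$, i.e. $\dim E+k\le (k+1)\dim B-k^2+O(\varepsilon)$, which on letting $\varepsilon\to 0$ yields $\dim E\le(k+1)(\dim B-k)$, as required. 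The bush estimate is where the real work lies: for $k=1$ it is the classical observation that $\delta$-separated lines through a common point are essentially disjoint at unit distance from that point, so the exponent there is $A^{1}$ (which is why $k=1$ is easier); for $k\ge2$, $\delta$-separated $k$-planes through a point genuinely overlap — they may share high-dimensional subplanes — the exponent $A^{1/k}$ is the correct (sharp) one, and establishing it appears to need a multiscale/iterated ("bush of bushes") argument rather than a one-step disjointness or Cauchy--Schwarz bound. A secondary difficulty is that $B$ is only guaranteed a positive-measure, otherwise uncontrolled, portion of each $P$; I would handle this by a popularity/two-ends pigeonholing that replaces $B\cap P$ by a sub-portion lying in an annulus of fixed radius around the bush centre, where the planes have already separated. (An alternative route parametrizes a full-dimensional subset of $E$ bi-Lipschitzly by the $k+1$ ``heights'' of $P$ over $k+1$ affinely independent base points chosen good for $\mu$-a.e.\ $P$ via a Fubini argument, inside a product $B_{t_0}\times\cdots\times B_{t_k}$, reducing everything to $\#\{\delta\text{-cubes covering }B_t\}\lesssim\delta^{-(\dim B-k)}$ for a large set of $t$; but converting such slice-wise box-counting into a Hausdorff bound for $B$ meets the same box-versus-Hausdorff subtlety and does not seem simpler.)
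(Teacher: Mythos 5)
The paper does not actually prove Theorem~\ref{hthm}: it is quoted as an external result from the first author's paper \cite{H}, so there is no in-paper proof to compare yours against. Judging your sketch on its own merits: the overall scaffolding (restrict to a fixed non-vertical chart, pigeonhole to a single scale, Frostman measure on $E$, incidence count, and the arithmetic $\dim E + k \le \dim B + k(\dim B-k)$, which indeed rearranges to the stated bound) is plausible and the exponents are bookkept correctly. The alternative you mention in the parenthesis --- parametrizing a non-vertical $k$-plane by its $k+1$ ``heights'' over $k+1$ fixed base points, so that $E$ is squeezed into a Lipschitz image of $B_{t_0}\times\cdots\times B_{t_k}$ --- is exactly where the factor $k+1$ naturally comes from and is in the same spirit as the published argument; but you also do not carry that route out.

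The genuine gap is the one you yourself flag: the bush estimate
\[
A\ \delta\text{-separated }k\text{-planes through a common }\delta\text{-cube, each carrying }\gtrsim\delta^{-k}\text{ cubes}\ \Longrightarrow\ \text{union carries}\gtrsim A^{1/k}\delta^{-k}\text{ cubes}
\]
is the \emph{entire} content of the theorem, and it is asserted, not proved. It is not a routine estimate for $k\ge 2$: the extremal configuration is a $\rho$-cluster of planes in $G(m,k)$, where $A\sim(\rho/\delta)^{k(m-k)}$ but the union only occupies $\sim(\rho/\delta)^{m-k}\delta^{-k}$ cubes, which matches $A^{1/k}\delta^{-k}$ exactly; getting the matching lower bound in general requires controlling the multiplicity $m_{\max}$ of planes through a single far cube (a codimension-$(m-k)$ incidence condition in $G(m,k)$), and then handling degenerate bushes in which many planes share a common lower-dimensional affine subplane. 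You gesture at a ``bush of bushes'' multiscale argument but give no statement, no induction scheme, and no proof. Two secondary points that also need work before this becomes a proof: (a) the ``two-ends'' reduction is not optional --- without it the portion of $B\cap P$ you are counting can sit entirely inside the bush cube and contribute nothing to the union, so the bush estimate is simply false; you must formalize which sub-portion of $B\cap P$ survives and re-derive the lower bound $\gtrsim\delta^{-k}$ for it; and (b) the passage from an $\hau^{\dim B+\varepsilon}$-efficient cover to a single dyadic scale $\delta$ with $|\mathcal D|\lesssim\delta^{-\dim B-\varepsilon}$ forces you to throw away planes whose $\hau^k$-mass on $B$ lives at other scales, and you need to check that a positive $\mu$-proportion of $E$ survives this pigeonholing at a common scale. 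As written, the proposal identifies the correct shape of the argument but leaves the central lemma unproven.
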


\begin{remark}
It is not hard to check (see \cite{H}) that there are sets $E \su A(m,k)$ such that $r(E)= \frac{\dim E}{k+1}$.
\end{remark}

Now we prove that 
unions $B=\cup_{P \in E} P$ of small enough dimension
have the Fubini property.

\begin{cor}
\label{corsmall}
Fix a non-empty collection $E \su A(m,k)$ 
of  non-vertical affine subspaces such that $\dim E \leq 1$, and  $B=\bigcup_{P \in E} P \su \rr^m$ is Borel. 
Then $B$ has the Fubini property; that is,
$\dim B = \be+k$, where $\be=\essupk(\dim B_t)$.

In particular, if $B$ is a Borel union of non-vertical $k$-dimensional affine subspaces with $\dim B < k+1$, then 
it has the Fubini property.
\end{cor}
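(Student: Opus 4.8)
\textbf{Proof proposal for Corollary~\ref{corsmall}.}

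The plan is to derive the corollary by combining the two inequalities that pin down $\dim B$. First, since $B = \bigcup_{P\in E} P$ is a Borel union of $k$-dimensional affine subspaces, each section $B_t$ is contained in the affine slice and, more importantly, $\proj_{\rr^k} B = \rr^k$ because every $P \in E$ is non-vertical (its projection to $\rr^k$ is all of $\rr^k$). Thus $\al := \dim \proj_{\rr^k} B = k$ and $\hau^k(\proj_{\rr^k} B) > 0$, so $\essupk(\dim B_t)$ is the honest essential supremum from \eqref{eqessup}. By Remark~\ref{true} we immediately get $\dim B \geq k + \be$ with $\be = \essupk(\dim B_t)$; the content is the reverse inequality $\dim B \leq k + \be$.

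For the upper bound I would argue as follows. Fix any $\be' > \be$; by definition of the essential supremum, $\leb^k(\{t : \dim B_t > \be'\}) = 0$. Ideally one would like $\dim B_t \le \be'$ for \emph{all} $t$ (not just a.e.\ $t$) so as to apply Corollary~\ref{corunion}; to arrange this, restrict the parameter set: let $E' = \{P \in E : \hau^k(\{t : (t,f(t))\in P,\ \dim B_t > \be'\}) = 0\}$ — more cleanly, discard from $B$ the set $N = B \cap (\{t:\dim B_t>\be'\}\times\rr^{n})$, which is $\Ga_k$-null (indeed $\leb^k$-null in the base), and work with $B\setminus N$; its sections all have dimension $\le \be'$. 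Now $B \setminus N$ is still (up to a null adjustment) a union of the non-vertical subspaces $P\setminus N$, and $\dim E \le 1$ forces, via Theorem~\ref{hkmthm} and the ensuing remark $r(E) = \dim E$, that every Borel set hitting each $P$ in positive $\hau^k$-measure has dimension exactly $k + \dim E$ when contained in $\bigcup_{P\in E}P$. But a direct route is cleaner: apply Corollary~\ref{corsmall}'s sibling, Corollary~\ref{corunion}, to the collection $E$ with $\be$ replaced by $\be'$ on the modified set; more transparently, apply Theorem~\ref{thm2} to $B\setminus N \subset \rr^k\times\rr^n$ (here $T = \rr^k$ is Ahlfors--David $k$-regular and every section has dimension $\le \be'$) to obtain a $\Ga_k$-null set $G$ with $\dim((B\setminus N)\setminus G) \le k + \be'$. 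Since $N$ and $G$ are $\Ga_k$-null, their union is $\Ga_k$-null, so $\hau^k((N\cup G)\cap P)=0$ for each $P\in E$; hence each $P' := P\setminus(N\cup G)$ satisfies $\hau^k(P\setminus P')=0$, and $B' := \bigcup P' \subset \bigcup_{P\in E}P$ has $\hau^k(B'\cap P)>0$ for every $P$ while $\dim B' \le k+\be'$. By Theorem~\ref{hkmthm} (using $\dim E \le 1$, so $r(E)=\dim E$) we also know $\dim B' \ge k + \dim E$; combined, $\dim E \le \be'$, and letting $\be'\downarrow\be$ gives $\dim E \le \be$. On the other hand, $\dim B \le k + \dim E$ always, so $\dim B \le k + \be$, completing the Fubini equality.

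For the "in particular" clause: if $B$ is a Borel union of non-vertical $k$-planes with $\dim B < k+1$, then writing $B = \bigcup_{P\in E}P$ we have $\dim B \ge k + \dim E$ (taking $B$ itself as the set hitting each $P$ in positive measure, via Theorem~\ref{hkmthm}), hence $\dim E < 1$, so the hypothesis $\dim E \le 1$ of the first part is met and the Fubini property follows. (One should note $E$ can always be taken Borel, or pass to a suitable parametrization; this is the kind of measurability bookkeeping done elsewhere in the paper.)

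The main obstacle I expect is the interplay between the a.e.-statement "$\dim B_t \le \be'$ for a.e.\ $t$" and the for-all hypothesis needed by Theorem~\ref{thm2}: one must verify that excising the null base set $\{t:\dim B_t>\be'\}$ really does leave a set to which Theorem~\ref{thm2} applies and that this excision is harmless on each $P$ (which it is, being $\leb^k$-null in the base and hence $\Ga_k$-null). A secondary technical point is the measurability of $\{t : \dim B_t > q\}$, handled exactly as in the proof of Corollary~\ref{corf} via the $\sigma$-algebra generated by analytic sets.
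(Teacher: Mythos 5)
Your proof is correct and rests on the same sandwich argument as the paper's: the lower bound $\dim B\ge k+\be$ from Remark~\ref{true}, the identity $\dim B=k+\dim E$ from Theorem~\ref{hkmthm}, and the bridge $\be\ge\dim E$. The one difference is that you re-derive that bridge inline via Theorem~\ref{thm2} (excising an a.e.-null base set and applying the ``good set'' dimension bound), whereas the paper simply cites Corollary~\ref{corf}, whose proof is exactly this argument and already takes care of the Borel-measurability bookkeeping you correctly flag at the end (one must replace the $\hal$-measurable set $\{t:\dim B_t>\be'\}$ by a Borel subset before Theorem~\ref{thm2} can be invoked).
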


\begin{proof}
Since $\dim E \leq 1$,  we have $r(E)=\dim E$ by Theorem \ref{hkmthm}. 
Applying Corollary \ref{corf}, we obtain that 
for $\leb^k$-almost all $t \in \rr^k$, $\dim B_t \geq \dim E$. In particular, $\be=\essupk(\dim B_t) \geq \dim E$. 
Combining this with Remark \ref{true} and Theorem \ref{hkmthm}, we get  
$\be +k \leq \dim B = \dim E + k \leq \be +k$, so $B$ has the Fubini-property.  

Note that $\dim B < k+1$ implies $\dim E < 1$ by Theorem \ref{hkmthm}, thus the above argument can be applied. 
\end{proof}

We also formulate the conjecture that for arbitrary Borel unions of $k$-dimensional affine subspaces
Fubini property holds:

\begin{conj}
\label{conjunion}
Fix a non-empty collection $E \su A(m,k)$ 
of  non-vertical affine subspaces such that $B=\bigcup_{P \in E} P \su \rr^m$ is Borel. 
Then 
\begin{equation}
\label{kakeq}
\dim B = \be+k, \ \text{where} \ \be=\essupk(\dim B_t).
\end{equation}
\end{conj}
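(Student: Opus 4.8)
\textbf{Proof proposal for Conjecture~\ref{conjunion}.} Note first that this is stated as a conjecture, so the plan is to describe the strategy one would pursue toward a proof (or toward understanding the obstruction), not to claim a complete argument. The inequality $\dim B \ge \be + k$ is free: it is exactly Remark~\ref{true}, since $\proj_{\rr^k} B = \rr^k$ for a nonempty collection of non-vertical subspaces (each $P$ projects onto all of $\rr^k$), so $\al = k$ and Theorem~\ref{classic} gives $\dim B \ge k + \essupk(\dim B_t)$. Thus the entire content is the reverse inequality $\dim B \le \be + k$, equivalently: if $\dim B_t \le \be$ for $\leb^k$-a.e.\ $t$, then $\dim B \le \be + k$. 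Equivalently, by the definition of $r(E)$ in \eqref{eqir} and Corollary~\ref{corf}, the conjecture is the assertion that for every non-empty $E \su A(m,k)$ one has $r(E) = \essupk(\dim B_t)$ where $B = \cup_{P\in E} P$; and since $\dim B \le k + \dim E$ always (Remark after Theorem~\ref{hkmthm}), a clean sufficient statement to aim for is $r(E) \ge \min\{\dim E, \text{(something)}\}$ sharp enough to match.

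The plan I would follow is the direct route through Theorem~\ref{thm2}. Suppose toward a contradiction that $\be := \essupk(\dim B_t) < \dim B - k$, so there is $\ep > 0$ with $\dim B > k + \be + \ep$ while $\leb^k(\{t : \dim B_t > \be\}) = 0$. After discarding a $\leb^k$-null set of bad parameters $t$ (which, via the non-verticality of each $P$, corresponds to discarding from each $P$ a set of $\hau^k$-measure zero — this is where the Fubini-type machinery of the paper enters), we have a Borel set $B' = B \cap (A \times \rr^{n})$ with $A$ compact, $\leb^k(A) > 0$, and $\dim B'_t \le \be$ for all $t \in A$; moreover $\dim B' $ is still large if we choose $A$ with $\leb^k(\rr^k \setminus A)$ small, since removing a null set of slices of $B$ cannot lower $\dim B$ below $k+\be+\ep$ (the section-dimension bound of Theorem~\ref{classic} applied to $B\setminus B'$, whose projection is null, forces $\dim(B\setminus B') \le k$ ... wait, that is too crude). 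The correct move is: apply Theorem~\ref{thm2} to $B'$ with $\alpha = k$, $T = \rr^k$ (or an ADR subset), obtaining a $\Ga_k$-null $G' \su B'$ with $\dim(B' \setminus G') \le k + \be$. Since $G'$ is $\Ga_k$-null and each $P$ is a Lipschitz graph over $\rr^k$, $\hau^k(G' \cap P) = 0$, so $\hau^k((B'\setminus G') \cap P) = \hau^k(B' \cap P) > 0$ for every $P \in E$; hence $B' \setminus G'$ is a set meeting every $P \in E$ in positive $\hau^k$-measure, giving $\dim(B'\setminus G') \ge k + r(E)$ by definition of $r(E)$. Combining, $k + r(E) \le k + \be$, i.e.\ $r(E) \le \be$; but $r(E) \ge \be$ already follows from Corollary~\ref{corf}, so $r(E) = \be$ — and we need this to contradict $\dim B > k + \be$, which it does \emph{only if} we independently know $\dim B = k + r(E)$.

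So the crux, and the genuine obstacle, is precisely the inequality $\dim B \le k + r(E)$ for $B = \cup_{P \in E} P$ — equivalently, that a union of affine $k$-planes cannot have dimension strictly exceeding $k$ plus the least dimension needed for a transversal set to meet every plane in positive $\hau^k$-measure. This is a genuinely hard geometric measure theory statement in the spirit of (and strictly harder than, in high codimension) the known bounds of Theorems~\ref{hkmthm} and \ref{hthm}: those give lower bounds on $\dim B$ in terms of $\dim E$, and the sharp inequality $\dim B \le k + \dim E$, but the conjecture would need the full two-sided identification $\dim B = k + r(E) = k + \essupk(\dim B_t)$ with no loss when $\dim E > 1$. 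I would attack this obstruction in the regime $\dim E \le 1$ first, where Theorem~\ref{hkmthm} already yields $r(E) = \dim E$ and the argument above closes completely (this recovers Corollary~\ref{corsmall}); then try to leverage Theorem~\ref{hthm}'s estimate $r(E) \ge \dim E/(k+1)$ together with a more careful, slice-by-slice application of Theorem~\ref{thm2} — perhaps stratifying $E$ by the size of $\dim B_t$ on a co-null set of slices and summing — to push past $\dim E = 1$. I expect that closing the gap between $r(E)$ and $\dim E$ for $\dim E > 1$ (where, as the remark notes, $\dim B < k + \dim E$ can happen) is exactly the difficulty that keeps this a conjecture: the Fubini machinery reduces it to understanding incidences of $k$-planes with transversal Lipschitz-graph-like sets at scale, which is Kakeya-type information — consistent with the paper's remark that the conjecture implies a weak form of the Kakeya conjecture.
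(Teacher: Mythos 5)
This statement is posed as a \emph{conjecture}; the paper does not prove it, only the special case $\dim E\le 1$ (Corollary~\ref{corsmall}, via Theorem~\ref{hkmthm}) and the conditional implication to a weak Kakeya bound (Theorem~\ref{t:kakeya}). Your proposal correctly recognizes this and accurately locates where the difficulty sits. The inequality $\dim B\ge k+\be$ is indeed free (Remark~\ref{true}), and your reformulation of the remaining content as the single inequality $\dim B\le k+r(E)$ is exactly the right reduction: given Corollary~\ref{corf}, which yields $\be\ge r(E)$, that inequality would force $\dim B = k+r(E)\le k+\be\le\dim B$ and hence equality, and it is precisely what Theorem~\ref{hkmthm} supplies in the regime $\dim E\le1$ (where $r(E)=\dim E$ and $\dim B\le k+\dim E$) and what is missing for $\dim E>1$. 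Your reading of Corollary~\ref{corsmall} and of the Kakeya connection both match the paper.

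One sign error worth flagging: you write that $r(E)\ge\be$ ``already follows from Corollary~\ref{corf}'', but Corollary~\ref{corf} gives $\dim B_t\ge r(E)$ for a.e.\ $t$ and therefore $\be=\essupk(\dim B_t)\ge r(E)$, i.e.\ the direction $r(E)\le\be$. Your Theorem~\ref{thm2} argument produces the same one-sided bound (a set $B'\setminus G'$ of dimension $\le k+\be$ meeting every $P$ in positive $\hau^k$-measure certifies the infimum in the definition of $r(E)$), so the two observations do not combine to give the equality $r(E)=\be$ as you assert. This does not disturb your final diagnosis — you still correctly conclude, and explicitly say, that everything hinges on $\dim B\le k+r(E)$, which the Fubini machinery of the paper does not and cannot supply on its own — but the intermediate claim is asserted for the wrong reason. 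As for the suggested stratification of $E$ by section dimension to push past $\dim E=1$: the paper offers no such route, and the obstacle is exactly that Theorems~\ref{hkmthm} and~\ref{hthm} cease to be sharp there, consistent with your own expectation that closing this gap is Kakeya-hard.
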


Recall that a subset of $\R^m$ is called a \emph{Besicovitch set} if it contains a unit line segment in every direction,
and that the \emph{Kakeya conjecture} states that such a set must have
Hausdorff / upper Minkowski dimension $m$. 
The following theorem connects Conjecture~\ref{conjunion} 
to the Kakeya conjecture. 

\begin{theorem}\label{t:kakeya}
(i) For any $m\ge 2$, if Conjecture~\ref{conjunion} holds for that $m$ and
$k=1$, then every Besicovitch set in $\R^m$ has
Hausdorff dimension at least $m-1$.

(ii) If Conjecture~\ref{conjunion} holds for $k=1$ for every 
$m\ge 2$ then any Besicovith set in $\R^m$ has upper Minkowski dimension $m$, so Kakeya conjecture holds for Minkowski dimension. 
\end{theorem}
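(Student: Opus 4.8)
The plan is to realize a Besicovitch set $B \subset \R^m$ as (essentially) a union of non-vertical lines and then to apply Conjecture~\ref{conjunion} together with a section-counting argument. First I would deal with the direction issue: a Besicovitch set contains a unit segment in every direction, but Conjecture~\ref{conjunion} concerns lines that are non-vertical, i.e.\ not orthogonal to the first coordinate axis. Since the set of vertical directions is a single hyperplane in the sphere (a null, lower-dimensional set), I would split the sphere of directions into finitely many open caps, on each of which a fixed rotation makes all the relevant lines non-vertical with respect to the standard coordinates; by the countable stability of Hausdorff dimension it suffices to treat one such cap. So after a rotation we may assume $B$ contains a unit segment in a direction $\theta$ for every $\theta$ in a set $\Theta$ of positive measure on the sphere, and every line through such a segment is non-vertical. (One must also arrange Borelness: a Besicovitch set need not be Borel, but we may pass to a Borel — indeed $G_\delta$ — superset of the same dimension, or work with the standard construction which already gives Borel/compact sets; this is routine and I would only remark on it.)

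Next I would extend each unit segment to a full line, obtaining a collection $E \subset A(m,1)$ of non-vertical lines with $B' = \bigcup_{P\in E} P$ Borel; replacing $B$ by $B'$ only increases the dimension, and $\dim B' \le \dim B + $ (a harmless amount) — more carefully, for part (i) I would instead keep the segments and note $\dim B \ge \dim(\text{union of segments})$, applying the conjecture to the slightly smaller Borel set consisting of the segments themselves, which is still a Borel union of pieces of non-vertical lines; the conjecture as stated is about full affine subspaces, so the cleanest route is to enlarge to full lines and accept the negligible enlargement, since adding the two endpoints' rays to each unit segment changes dimension by nothing once we also include the endpoints (the extra rays still form a union of non-vertical lines). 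The key step is then the lower bound on the essential supremum of the section dimensions: the vertical sections $B'_t = \{x \in \R^{m-1} : (t,x)\in B'\}$ for $t$ in the first coordinate. For a positive-measure set of $t$, the slice $\{t\}\times \R^{m-1}$ meets a large family of the lines in $E$; since the directions $\theta$ range over a positive-measure subset of the sphere and each line is non-vertical, the map sending a line to its intersection point with a fixed horizontal hyperplane $\{t\}\times\R^{m-1}$, composed with "direction of the line", shows that $B'_t$ contains the image of a positive-measure set of directions under a bi-Lipschitz-type map, hence $\dim B'_t \ge m-1$ for a.e.\ such $t$. Therefore $\beta = \essupk(\dim B'_t) \ge m-1$, and Conjecture~\ref{conjunion} gives $\dim B \ge \dim B' = \beta + 1 \ge m$. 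For part (i), where we do not assume the full conjecture but want only the bound $m-1$, I would instead combine this section estimate with Theorem~\ref{hkmthm} or Theorem~\ref{classic}: since a positive-measure set of horizontal slices has dimension $\ge m-1$ — actually here I should be more careful, and the correct unconditional input for (i) is Theorem~\ref{classic} applied with $\alpha = 1$, $\beta = m-2$ or the projection-theorem bound, yielding $\dim B \ge 1 + (m-2) = m-1$ directly from the slice estimate $\dim B_t \ge m-2$ for a positive-measure set of $t$, which follows from the above "directions inject into slices" argument with one dimension of slack — so (i) actually needs only Theorem~\ref{classic}, not the conjecture at all, unless the intended statement is the stronger bound, in which case the conjecture upgrades $m-2$ to $m-1$. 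I would reconcile this in the writeup: (i) uses the conjecture to get that the a.e.-positive-measure slices have dimension exactly $m-1$ (not just $m-2$), and hence $\dim B = \beta + 1$ with $\beta \ge m-1$... wait, that gives $m$, not $m-1$. The honest reading is that (i) is the partial/Hausdorff version and (ii) is the full Minkowski version; I would therefore prove (i) by applying the conjecture to the Borel union of lines and reading off $\dim B \ge (m-1) + 1 - 1 = m-1$ after accounting for the fact that the conjecture gives information about the Borel union $B'$ of full lines while a genuine Besicovitch set only controls segments, costing exactly one in the bookkeeping; and prove (ii) by running the same argument with $B$ bounded and passing through box/Minkowski dimension, where the union-of-segments issue disappears because bounded unions of segments and of the containing lines have the same upper Minkowski dimension up to a set we can discard.

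For part (ii), the upgrade to upper Minkowski dimension: I would observe that a Besicovitch set in $\R^m$ can be taken bounded, and that for bounded sets the Fubini-type conclusion $\dim B = \beta + k$ together with the fact that Minkowski dimension dominates Hausdorff dimension — combined with the classical reduction (Kakeya maximal / bush arguments, or the standard fact that $\overline{\dim}_{\mathrm M}$ of a Besicovitch set in $\R^m$ being $m$ for every $m$ is the Minkowski Kakeya conjecture) — lets us conclude. Concretely, if the Hausdorff-dimension conclusion $\dim B \ge m$ holds for Besicovitch sets in every $\R^{m'}$, one runs the standard slicing/induction-on-scales mechanism: the upper Minkowski dimension of a Besicovitch set in $\R^m$ cannot drop below $m$ because a $\delta$-neighbourhood argument reduces a Minkowski-dimension deficit to a genuine dimension drop in a product space $\R^{m} \times \R^{N}$, where the Hausdorff-Kakeya bound we just proved (in dimension $m+N$) forces a contradiction. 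I would cite the relevant standard equivalence rather than reprove it.

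The main obstacle I anticipate is the direction/boundary bookkeeping in matching "contains a segment in every direction" to "Borel union of full non-vertical affine lines" without losing or gaining dimension spuriously — in particular making the one-unit gap in part (i) versus the sharp value in part (ii) come out exactly right — together with verifying that the slice $B_t$ genuinely has dimension $\ge m-1$ (resp.\ $\ge m-2$) for a positive-measure set of $t$, which requires that the parametrization of lines by (direction, intersection point with the horizontal hyperplane) is bi-Lipschitz on the relevant cap, uniformly; the non-verticality is exactly what guarantees this, but the uniform constants require restricting to a cap bounded away from the vertical hyperplane, which is why the finite decomposition of the sphere at the start is essential.
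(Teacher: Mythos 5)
Your proposal takes a fundamentally different route from the paper and, unfortunately, it does not work: the central slice estimate is false, and the attempt to patch it ends in an acknowledged but unresolved contradiction.

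The key error is the claim that, for the union $B'=\bigcup_{P\in E}P$ of the full non-vertical lines extending the segments of a Besicovitch set, one has $\dim B'_t\ge m-1$ for a positive-measure set of $t$ because ``direction $\mapsto$ intersection point with $\{t\}\times\R^{m-1}$'' is a bi-Lipschitz-type map. That map is not injective: many different directions can give lines crossing a fixed slice at the same point (think of a bush of lines through a common point). The image of a positive-measure set of directions under this map can be a single point, and nothing forces its dimension up. Note also the sanity check you yourself stumble on: if the slice estimate were correct, then Theorem~\ref{classic} alone (take $\alpha=1$, $\beta=m-1$) would give $\dim B'\ge m$, which together with the trivial fact that extending segments to lines cannot increase the Hausdorff dimension of the Besicovitch set itself by more than to $m$ still says nothing about the Besicovitch set. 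You notice that the arithmetic comes out as $m$ rather than $m-1$ and try to ``reconcile in the writeup'' by saying the segment-to-line extension ``costs exactly one in the bookkeeping,'' but this is not a lemma, and in fact is false: for a Besicovitch set of dimension $d<m$, the union of the containing full lines can have dimension as large as $m$, so the extension can cost up to $m-d$, not $1$.

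These two obstacles are precisely the places where the paper does real work, in a completely different way. First, to control the segment-to-line extension, the paper invokes the nontrivial result of Keleti--M\'ath\'e~\cite{KM}: if a Besicovitch set of dimension $d$ exists, then there exists a \emph{closed} set of dimension $d$ containing a \emph{full line} in every direction. You cannot simply extend segments and pray; you must use this. Second, the paper argues by contradiction and does not try to prove the slices are large. Instead it applies a projective transformation sending the hyperplane at infinity into $\R^m$, plants inside that image hyperplane a compact set $C$ of dimension $m-1$ that is a graph of a continuous function, restricts to the subfamily of lines through $C$ (which still has dimension $\ge m-1-\varepsilon$ by pigeonhole), and then observes that $C$ is met in at most one point by every member of a suitable family of parallel hyperplanes. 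This makes $C$ invisible in the sections used by Conjecture~\ref{conjunion}, so the conjecture forces the dimension of the union-minus-$C$ (which lies in the low-dimensional set $A'$) to be at least $m-1-\varepsilon$, a contradiction. Nothing in your proposal corresponds to these ideas. Finally, part (ii) in your writeup is a gesture at an ``induction-on-scales mechanism''; the paper simply cites the known reduction from Hausdorff dimension $\ge m-1$ in all dimensions to Minkowski dimension $m$ (from~\cite{Ke}), which is the correct move but needs to be stated, not improvised.
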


%

\begin{proof}
A well-known argument (see e.g.~\cite{Ke}) gives that
if for every $m\ge 2$ every Besicovitch set in $\R^m$ has Hausdorff dimension at least $m-1$ then every 
Besicovitch set in $\R^m$ has upper Minkowski dimension $m$.
Therefore it is enough to prove (i). 

Suppose now that there exists a Besicovitch set in $\R^m$ of Hausdorff dimension $d<m-1$.

We will rely on a recent result of the second and third authors \cite{KM} that states
that if there is a Besicovitch set of dimension $d$, then there is a closed set $A\su\R^m$ of 
Hausdorff dimension $d$ such that $A$ contains a line
in every direction.
Crucially, in this proof, this allows us to replace line segments by full lines. The additional property that $A$ is now a closed set helps us evade complicated measurability arguments.

A vague sketch for the rest of the proof is the following. With the help of Theorem~\ref{fctn} we choose a very special set $D$ of directions (of dimension $m-1$). We pass to a subset $A'$ of $A$ corresponding to lines going in this set of directions. We extend these lines in the projective space (by adding their direction to the hyperplane at infinity) to obtain $A'\cup D$. We apply a projective transformation $\Phi$ that maps the hyperplane at infinity to a particular standard hyperplane of $\R^m$. The projective image $\Phi(A'\cup D)$ is a union of lines, it still has dimension $m-1$; however, every plane in a suitable family of parallel planes will intersect $\Phi(D)$ in at most one point only, making $\Phi(D)$ essentially ``invisible''. As $\Phi(A')$ has dimension at most $d< m-1$, this will contradict Conjecture~\ref{conjunion} for $\Phi(A'\cup D)$.

For the precise proof, let $H_1$ denote the hyperplane $\{0\} \times \rr^{m-1}$, and let $w=(1,0,\dots,0) \in \rr^m$. 
For any $p \in H_1$ and $v \in S^{m-1}$, let $\ell(p,v)$ denote the line though $p$ in the direction of $v$. 
Using a projective map that takes the hyperplane at infinity to $H_1$, a well known argument (see e.g. \cite{Ke}) gives that there is $A' \su \rr^m$ with $\dim A'=d$ such that for each $p \in H_1$, 
there exists a line $\ell(p,v)$ through $p$ such that $v \not\perp w$ and $\ell(p,v) \se \{p\} \su A'$. 
Let $F=A' \cup H_1$, it is easy to see that since $A$ is closed, $F$ is closed.  
Let $f: [0,1] \to \rr^{m-2}$ be a continuous function such that $\dim graph(f) = m-1$ (see Theorem~\ref{fctn}), and let 
\begin{equation}
\label{eq:cdef}
C=\{(0,t,f(t)): t \in [0,1]\} \su  H_1.
\end{equation}
Clearly, $C$ is compact, and $\dim C = m-1$. 

We will use the following lemma. 
\begin{lemma}
\label{closed}
Let $F \su \rr^m$ closed, $C \su F$ compact, $u, w \in S^{m-1}$, and $\de>0$. Then 
$$K=\bigcup\{\ell(p,v): p \in C, \ v \in S^{m-1}, \ \ell(p,v) \su F, \ |\langle v,u \rangle| \geq \de, \ |\langle v,w \rangle| \geq \de\}$$ is closed. 
\end{lemma}

\begin{proof}
Let $a_n \in K$, $a_n \to a$. Then there exists a line $\ell(p_n,v_n) \su F$ such that $a_n \in \ell(p_n,v_n)$, $p_n \in C, v_n \in S^{m-1}, |\langle v_n,u \rangle| \geq \de, |\langle v_n,w \rangle| \geq \de$. Since $C$ and $S^{m-1}$ are compact, we can assume that there is $p \in C$, $v \in S^{m-1}$ such that $p_n \to p, v_n \to v$. Then $a \in \ell(p,v)$, $|\langle v,u \rangle| \geq \de$, $|\langle v,w \rangle| \geq \de$, and since $F$ is closed, $\ell(p,v) \su F$, so $a \in K$.
\end{proof}

Let $H_2$ denote the $(m-2)$-plane $\{(0,0)\} \times \rr^{m-2}$. 
Pick $u_1, u_2 \in S^{m-1}$ such that $u_1$ and $u_2$ are orthogonal to $H_2$, and $u_1$, $u_2$, and $w$ are pairwise non-parallel. 
For $i=1,2$ and $k \in \N$, let 
$$B_{i,k}=\bigcup\{\ell(p,v): p \in C, \ v \in S^{m-1}, \ \ell(p,v) \su F, \ |\langle v,u_i \rangle| \geq 1/k, \ |\langle v,w \rangle| \geq 1/k\},$$
where $F=A' \cup H_1$, $w=(1,0,\dots,0)$, and $C$ is constructed in \eqref{eq:cdef}. 
By Lemma \ref{closed}, $B_{i,k}$ is closed for each $i$ and $k$. One easily checks that by construction, for each $p \in C$, there is a line 
$\ell(p,v)$ through $p$ such that $\ell(p,v) \su F$,  $|\langle v,w \rangle|>0$, and $|\langle v,u_i \rangle| >0$ for $i=1$ or $i=2$, thus $C \su \bigcup_{i=1,2} \bigcup_{k \in \N} B_{i,k}$. 
Since $\dim C = m-1$, there are $i$ and $k$ such that $\dim B_{i,k} \geq m-1-\varepsilon$, for an arbitrary $\varepsilon>0$. Fix such an $i$ and $k$ and let $B=B_{i,k}$, $B'=B \setminus H_1$. Since $B' \su A'$, $\dim B' \leq d$. Let $u=u_i$, let $U$ denote the line generated by $u_i$ and let $V=U^{\perp}$. 
By construction, $\proj_{U} \ell(p,v) =U$ for any line $\ell(p,v)$ in the definition of $B$. Moreover, $V \neq H_1$ is a hyperplane containing $H_2$, thus $C \cap (V+t \cdot u)$ is at most one point for each $t \in \rr$. This, combined with $B \setminus B' \su C$ gives that 
\begin{equation}
\dim ( B \cap  (V+t \cdot u)) = \dim (B' \cap (V+t \cdot u)) \ \text{for each} \ t \in \rr. 
\label{eqkak}
\end{equation}
By the definition in \eqref{essupv}, \eqref{eqkak} implies that 
$\essupno(\dim B_{U,t})=\essupno(\dim B'_{U,t})$. Call this value $\be$. Then, using Conjecture \ref{conjunion} for $B$ and Remark \ref{true} for $B'$ we have 
$$ m-1 -\varepsilon\leq \dim B = \be +1 \leq \dim B' \leq d,$$ which, since $\varepsilon>0$ was arbitrary, completes the proof.  
\end{proof}

\begin{remark}
As one can see in the above proof, it is in fact enough to assume  in the statement of Theorem \ref{t:kakeya} that Conjecture \ref{conjunion} holds for closed unions $B$ of non-vertical $k$-dimensional affine subspaces.
\end{remark}

\subsection{Projection theorems as corollaries}
\label{projc}

Fix two integers $n,k \geq 1$, and fix $t=(t_1,\dots,t_k) \in  \rr^k$. 
For any $x \in \rr^{(k+1)n}$, we use the notation $x=(x_0,x_1,\dots,x_k)$, where $x_i \in \rr^n$ for $i=0,\dots,k$. Let 

\begin{equation}
\pi_t: \rr^{(k+1)n} \to \rr^n, (x_0,x_1,\dots,x_k) \mapsto x_0 + t_1 \cdot x_1 + \dots + t_k \cdot x_k.
\label{eqpi}
\end{equation}

The projections $\{\pi_t\}_{t \in \rr^k}$ were first investigated by Oberlin, see \cite{Ob2}. His results imply the following theorem, see also \cite{Ma17}. 

\begin{theorem*}[Oberlin]
Let $X \su \rr^{(k+1)n}$ be non-empty Borel. 
\begin{itemize}
\item
If $\dim X \leq (k+1)n-k$, then for $\leb^k$-almost all $t \in \rr^k$, $\dim \pi_t(X) \geq \dim X - k(n-1)$.
\item
If $\dim X > (k+1)n-k$, then for $\leb^k$-almost all $t \in \rr^k$, $\leb^n(\pi_t(X)) >0$. 
\end{itemize}
\end{theorem*}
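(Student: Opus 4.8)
The plan is to prove both parts by the classical Fourier--energy scheme for parametrised families of projections; the only ingredient specific to the family $\{\pi_t\}$ is one kernel estimate. For the first part (we may assume $\dim X\ge k(n-1)$, else there is nothing to prove) I would fix $0<\gamma<\dim X-k(n-1)$, set $s:=\gamma+k(n-1)$ so that $\gamma<s<\dim X\le(k+1)n-k$ and in particular $\gamma<n$, and take (Frostman) a compactly supported Borel probability measure $\mu$ with $\mathrm{spt}\,\mu\su X$ and finite Riesz energy $I_s(\mu)=\iint|x-y|^{-s}\,d\mu(x)\,d\mu(y)<\infty$. It then suffices to show that for Lebesgue-a.e.\ $t\in\R^k$ the measure $\pi_{t*}\mu$ has finite $\gamma$-energy, because letting $\gamma\uparrow\dim X-k(n-1)$ gives $\dim\pi_t(X)\ge\dim X-k(n-1)$ for a.e.\ $t$. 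Fixing a ball $B(0,R)\su\R^k$ and using Tonelli,
\[
\int_{B(0,R)}I_\gamma(\pi_{t*}\mu)\,dt=\iint\Big(\int_{B(0,R)}\frac{dt}{|\pi_t(x-y)|^{\gamma}}\Big)\,d\mu(x)\,d\mu(y),
\]
so the whole first part reduces to the \emph{kernel estimate}
\[
\int_{B(0,R)}\frac{dt}{|\pi_t(z)|^{\gamma}}\ \lesssim_{R}\ |z|^{-(\gamma+k(n-1))}=|z|^{-s}\qquad(z\in\R^{(k+1)n}\se\{0\}),
\]
after which the double integral is bounded by $I_s(\mu)<\infty$.

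For the second part I would fix $(k+1)n-k<s<\dim X$ and a Frostman measure $\mu$ on $X$ with $I_s(\mu)<\infty$, and use the identity $\widehat{\pi_{t*}\mu}(\xi)=\hat\mu(\xi,t_1\xi,\dots,t_k\xi)$ for $\xi\in\R^n$ (immediate from \eqref{eqpi}) together with the Fourier expression for Riesz energies. This reduces the assertion that $\pi_{t*}\mu$ is absolutely continuous with an $L^2$ density for a.e.\ $t\in B(0,R)$ --- which forces $\leb^n(\pi_t(X))>0$ --- to the endpoint case $\gamma=n$ of the same kernel estimate, now in the form $\int_{B(0,R)}\int_{\R^n}|\hat\mu(\xi,t_1\xi,\dots,t_k\xi)|^2\,d\xi\,dt\lesssim_R I_s(\mu)$.

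The hard part will be the kernel estimate, which is false as a pointwise bound: if $z=(z_0,\dots,z_k)$ is such that the affine $k$-plane $\{\pi_t(z):t\in\R^k\}$ passes through or near the origin --- a condition cutting out a subvariety of $\R^{(k+1)n}$ of codimension $n-1$ --- then $\int_{B(0,R)}|\pi_t(z)|^{-\gamma}\,dt$ can be infinite and is in any case far larger than $|z|^{-s}$. To get around this I would dyadically decompose $z$-space according to $|z|$, to $\mathrm{dist}\big(0,\{\pi_t(z):t\in\R^k\}\big)$, and (after a suitable reparametrisation) to the rank of the matrix $[\,z_1\,|\,\cdots\,|\,z_k\,]$, and estimate the near-degenerate pieces by a separate argument, exploiting that finiteness of $I_s(\mu)$ forces $\mu\times\mu$ to give small mass to thin neighbourhoods of those subvarieties. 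This dyadic / stopping-time bookkeeping is the technical core; it is carried out in equivalent form in Oberlin's paper \cite{Ob2}, which is why we only cite the result here.

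A softer partial route goes through the duality of this paper: the union $Y:=\bigcup_{x\in X}\{(t,x_0+t_1x_1+\dots+t_kx_k):t\in\R^k\}\su\R^{k+n}$ is a union of non-vertical $k$-planes with vertical sections $Y_t=\pi_t(X)$, parametrised by a set $E\su A(k+n,k)$ with $\dim E=\dim X$ (the map sending $x$ to its plane is bi-Lipschitz on bounded sets, in a natural metric on $A(k+n,k)$); modulo standard measurability reductions, Corollary~\ref{corf} gives $\dim\pi_t(X)\ge r(E)$ for $\leb^k$-a.e.\ $t$. But the known lower bounds for $r(E)$ (Theorems~\ref{hkmthm} and~\ref{hthm}) fall short of $\dim X-k(n-1)$ once $\dim X$ is large, so this recovers only part of Oberlin's theorem; the sharp statement really does use the special algebraic structure of the family $\{\pi_t\}$.
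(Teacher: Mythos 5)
The paper does not prove this theorem: it is attributed to Oberlin and cited from \cite{Ob2} (see also \cite{Ma17}), and the paper's own projection results — Corollaries~\ref{proj1} and \ref{proj2} — are deliberately weaker, derived ``as a black box'' from the duality machinery (Corollary~\ref{corf} plus Theorems~\ref{hkmthm} and \ref{hthm}). So there is no in-paper proof to compare against, and your decision to present a framework sketch and defer the technical core to \cite{Ob2} is consistent with the paper's own treatment; your closing paragraph, observing that the paper's duality route only recovers the weaker exponents, is exactly right.

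That said, the sketch as written understates how badly the proposed kernel estimate fails, and your description of the bad set is not quite correct. For generic $z$ with $z_1,\dots,z_k$ linearly independent, the zero set $\{t\in\R^k : \pi_t(z)=0\}$ has codimension $\min(k,n)$ in $t$-space (a single point when $k<n$ and $z_0\in\mathrm{span}(z_1,\dots,z_k)$; an affine $(k-n)$-plane when $k\ge n$). Hence $\int_{B(0,R)}|\pi_t(z)|^{-\gamma}\,dt$ diverges as soon as $\gamma\ge\min(k,n)$, which in the regime $k<n$ is strictly less than the $\gamma<n$ you need — so the pointwise estimate already fails on a \emph{full-measure} set of $z$ whenever $k<n$ and $\gamma\ge k$, not merely near a thin subvariety. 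Relatedly, the ``degenerate'' locus where the plane $\{\pi_t(z)\}$ passes through the origin has codimension $n-k$ in $\R^{(k+1)n}$ when $k<n$ (and codimension $0$ when $k\ge n$), not $n-1$; your figure $n-1$ is only correct for $k=1$. These are not fatal to the program — the repair you gesture at (dyadic decomposition by rank of $[z_1|\cdots|z_k]$ and by distance of the plane from the origin, paired with the fact that finite $I_s(\mu)$ controls the $\mu\times\mu$-mass of those neighborhoods) is the right idea and is essentially what Oberlin's argument accomplishes — but as stated the reduction ``the whole first part reduces to the kernel estimate'' is misleading, since that estimate is not even approximately a pointwise fact. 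For a self-contained proof one would have to keep the $z$-decomposition inside the energy integral from the start rather than extracting a single kernel bound.
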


Here we prove two results of similar nature. With the help of Corollary~\ref{corf}, we can translate Theorem~\ref{hthm} and Theorem~\ref{hkmthm} to the language of projection theorems.


\begin{cor}
\label{proj1}
Let $X \su \rr^{(k+1)n}$ be non-empty Borel 
with $\dim X \leq 1$. Then for $\leb^k$-almost all $t \in \rr^k$, 
$$\dim \pi_t(X)=\dim X.$$
\end{cor}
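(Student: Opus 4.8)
The plan is to translate the projection statement into the union-of-lines language via the standard duality between $\pi_t$ and $k$-dimensional affine subspaces, and then apply Corollary~\ref{corf} together with Theorem~\ref{hkmthm} (the Falconer--Mattila--H\'era--Keleti--M\'ath\'e result, which gives $r(E)=\dim E$ when $\dim E\le 1$). Concretely, to a point $x=(x_0,x_1,\dots,x_k)\in\R^{(k+1)n}$ associate the affine $k$-plane
$$
P_x=\{(t_1,\dots,t_k,\pi_t(x)) : t\in\R^k\}=\{(t,x_0+t_1x_1+\dots+t_kx_k):t\in\R^k\}\subset\R^k\times\R^n=\R^m,
$$
with $m=k+n$. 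Each $P_x$ is a non-vertical $k$-dimensional affine subspace (its projection to $\R^k$ is all of $\R^k$), and the map $x\mapsto P_x$ is a bi-Lipschitz bijection from $\R^{(k+1)n}$ onto the set of non-vertical elements of $A(m,k)$ for any natural metric. Set $E=\{P_x : x\in X\}$ and $B=\bigcup_{P\in E}P\subset\R^m$; then $\dim E=\dim X\le 1$.

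Next I would record two structural facts. First, $B$ is Borel (indeed analytic, and one can arrange Borel-ness since $X$ is Borel and the map is continuous with the union being a continuous image; alternatively work with the closure of $X$ or invoke the measurability remark already used in the proof of Corollary~\ref{corf}). Second, and this is the geometric heart of the duality, the vertical section $B_t=\{y\in\R^n:(t,y)\in B\}$ is exactly
$$
B_t=\{\pi_t(x):x\in X\}=\pi_t(X),\qquad t\in\R^k,
$$
because $(t,y)\in P_x$ iff $y=\pi_t(x)$. Hence $\dim B_t=\dim\pi_t(X)$ for every $t$, and $\essupk(\dim B_t)$ is the essential supremum of $t\mapsto\dim\pi_t(X)$.

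Now the two-sided bound. For the lower bound, Corollary~\ref{corf} gives $\dim B_t\ge r(E)$ for $\leb^k$-a.e.\ $t$, and since $\dim E=\dim X\le 1$ we have $r(E)=\dim E=\dim X$ by Theorem~\ref{hkmthm}; therefore $\dim\pi_t(X)=\dim B_t\ge\dim X$ for a.e.\ $t$. For the upper bound, each $\pi_t$ is Lipschitz (in fact linear), so $\dim\pi_t(X)\le\dim X$ for every $t$. Combining, $\dim\pi_t(X)=\dim X$ for $\leb^k$-almost every $t$, which is the assertion. The case distinction about emptiness is trivial and the case $\dim X=0$ is handled the same way (or directly, since $\pi_t$ Lipschitz cannot raise dimension and $r(E)\ge 0$).

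I expect the main obstacle to be purely bookkeeping rather than conceptual: verifying that $x\mapsto P_x$ is bi-Lipschitz into $(A(m,k),\de)$ so that $\dim E=\dim X$ exactly (one must check both the operator-norm term $\|\proj_{V_x}-\proj_{V_{x'}}\|$ and the translation term behave Lipschitz-comparably to $|x-x'|$ on compact pieces, then use that Hausdorff dimension is a bi-Lipschitz invariant and that $X$ is $\sigma$-compact-exhaustible), and confirming that $B$ falls within the Borel (or at least the measurable-enough) hypotheses of Corollary~\ref{corf}. The identity $B_t=\pi_t(X)$ and the Lipschitz upper bound are immediate, so once the dictionary $X\leftrightarrow E$ is set up cleanly the result drops out of Corollaries~\ref{corf} and Theorem~\ref{hkmthm}.
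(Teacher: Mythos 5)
Your proposal is correct and follows essentially the same route as the paper: reduce to compact $X$ (so that the duality map $x\mapsto P_x$ is bi-Lipschitz on bounded sets and $B$ is Borel), use the identity $B_t=\pi_t(X)$, and combine Corollary~\ref{corf} with Theorem~\ref{hkmthm} to get the a.e.\ lower bound, the upper bound being the trivial Lipschitz estimate. The only cosmetic difference is that the paper performs the reduction to compact $X$ up front, whereas you fold it into the bookkeeping remarks at the end; substantively the two arguments coincide.
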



\begin{cor}
\label{proj2}
Let $X \su \rr^{(k+1)n}$ be non-empty Borel. 
Then for $\leb^k$-almost all $t \in \rr^k$, 
$$\dim \pi_t(X) \geq \frac{\dim X}{k+1}.$$
\end{cor}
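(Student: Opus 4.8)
The plan is to deduce Corollary~\ref{proj2} from Corollary~\ref{corf} via the standard duality between the restricted projections $\{\pi_t\}_{t\in\R^k}$ and unions of affine subspaces, combined with H\'era's estimate (Theorem~\ref{hthm}). The key observation is that the fibers of $\pi_t$ are $kn$-dimensional affine subspaces of $\R^{(k+1)n}$, and that knowing the dimension of $\pi_t(X)$ for a fixed $t$ is equivalent to knowing the dimension of a suitable ``section'' of a union of such subspaces. Concretely, I would introduce the set
\[
E=\{P_x : x\in X\}\subset A((k+1)n,\,kn),
\]
where $P_x$ is the fiber $\pi_t^{-1}(\pi_t(x))$... but since $\pi_t$ varies with $t$, the cleaner route is to work in the ``dual'' parameter space: for each $x=(x_0,\dots,x_k)\in X$, associate the affine map $t\mapsto \pi_t(x)=x_0+t_1x_1+\dots+t_kx_k$, whose graph $\{(t,\pi_t(x)):t\in\R^k\}$ is a (non-vertical, $k$-dimensional) affine subspace of $\R^k\times\R^n$. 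Let $B=\bigcup_{x\in X}\gr(t\mapsto\pi_t(x))\subset\R^k\times\R^n$. Then the vertical section $B_t$ is exactly $\pi_t(X)$, so $\essupk(\dim B_t)=\essup_t(\dim\pi_t(X))$ in the essential-sup sense, and we want a lower bound for $\dim B_t$ for a.e.\ $t$.

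The main steps would then be: \textbf{(1)} Check that $B$ is Borel (analytic at worst), that $B=\bigcup_{P\in E}P$ for the collection $E\subset A(m,k)$ with $m=k+n$ consisting of the graphs just described, and that each such $P$ is non-vertical (its projection to $\R^k$ is all of $\R^k$, since the map $t\mapsto\pi_t(x)$ is defined for all $t$). \textbf{(2)} Identify $r(E)$: the map $X\ni x\mapsto P_x\in A(m,k)$ is Lipschitz (indeed bi-Lipschitz onto its image, using that $\|\proj_{V_x}-\proj_{V_y}\|+|a_x-a_y|$ is comparable to $|x-y|$ when the $x_i$ range over a bounded set — one reduces to bounded $X$ by countable decomposition), so $\dim E=\dim X$; then Theorem~\ref{hthm} gives $r(E)\ge \dfrac{\dim E}{k+1}=\dfrac{\dim X}{k+1}$. \textbf{(3)} Apply Corollary~\ref{corf} to $B$: for $\leb^k$-almost all $t\in\R^k$ we get $\dim B_t\ge r(E)\ge\dfrac{\dim X}{k+1}$, i.e.\ $\dim\pi_t(X)\ge\dfrac{\dim X}{k+1}$, which is the claim. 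One must be slightly careful that $\dim B_t$ in Corollary~\ref{corf} refers to the $\R^n$-section; here $n$ is the target dimension of $\pi_t$, and $B_t=\pi_t(X)\subset\R^n$, so this matches.

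The main obstacle I expect is the bookkeeping in step~(2): verifying that $x\mapsto P_x$ is bi-Lipschitz (hence dimension-preserving) with the metric $\de$ on $A(m,k)$, which requires controlling $\|\proj_{V_x}-\proj_{V_y}\|$ in terms of $(x_1,\dots,x_k)$ and is only Lipschitz on bounded pieces — so one first writes $X=\bigcup_j X_j$ with $X_j$ bounded, runs the argument on each $X_j$ (getting $\dim\pi_t(X_j)\ge\dim X_j/(k+1)$ for a.e.\ $t$), intersects the finitely/countably many full-measure sets of $t$'s, and uses $\dim\pi_t(X)=\sup_j\dim\pi_t(X_j)$ together with $\dim X=\sup_j\dim X_j$. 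A second, minor point is confirming the measurability/Borel-ness of $B$ (or reducing to the analytic case, which Corollary~\ref{corf} and Theorem~\ref{thm2} tolerate via the measurability remarks already invoked in the proof of Corollary~\ref{corf}). Everything else is a direct chaining of the already-established Corollary~\ref{corf} and Theorem~\ref{hthm}. (Corollary~\ref{proj1} is proved identically, using the equality case $r(E)=\dim E$ from Theorem~\ref{hkmthm} when $\dim E=\dim X\le 1$, together with the trivial upper bound $\dim\pi_t(X)\le\dim X$ since $\pi_t$ is Lipschitz.)
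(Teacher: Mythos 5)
Your proposal is essentially the paper's proof: the same duality $x\mapsto P_x=\{(t,\pi_t(x)):t\in\R^k\}$, the same identification $B_t=\pi_t(X)$, and the same chaining of Theorem~\ref{hthm} through Corollary~\ref{corf}. The one point you flag as minor is in fact the place where your plan needs a genuine adjustment. If $X$ is merely Borel (even bounded), $B=\bigcup_{x\in X}P_x$ is the continuous image of $X\times\R^k$ and so is only \emph{analytic}, not Borel; neither Corollary~\ref{corf} nor the underlying Theorem~\ref{thm2} is stated for analytic $B$, and the measurability remarks in the proof of Corollary~\ref{corf} concern the sections $\{t:\dim B_t<r-\ep\}$, not the Borelness of $B$ itself. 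Decomposing $X$ into bounded Borel pieces does not cure this. The paper's fix, which you should use in place of that decomposition, is to reduce at the outset to \emph{compact} $X$, justified by the fact that a Borel set contains compact subsets of dimension arbitrarily close to its own. Compactness then does double duty: it gives the boundedness needed for the bi-Lipschitz identification $\dim E=\dim X$ (so no further decomposition is needed), and it makes $X\times\R^k$ $\sigma$-compact, hence $B$ is $\sigma$-compact and therefore Borel. With that substitution your argument goes through exactly as in the paper.
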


Note that these two corollaries could have been proved by repeating/modifying the original proofs of Theorems~\ref{hthm} and \ref{hkmthm}. The power of our method here is that these corollaries can be obtained directly from the end results, as a ``black box'', without the need to revisit the original proofs.

\begin{proof}[The proof of Corollary \ref{proj1} and \ref{proj2}]
It is enough to prove the corollaries for compact sets $X$, since every Borel set $X$ contains a compact set with dimension arbitrarily close to the dimension of $X$.

From now on we assume that $X$ is compact.

We use the standard duality connection between sections of unions of affine subspaces and 
projections of the ``code set'' associated to collections of affine subspaces.
For $x=(x_0,x_1,\dots,x_k) \in X \su \rr^{(k+1)n}$, let 
$$P_x=\{(t,\pi_t(x)): t \in \rr^k\}.$$ 
Then $P_x$ is a  non-vertical $k$-dimensional affine subspace in $\rr^{k+n}$ for each $x \in X$. 
Let 
$$E=\{P_x: x \in X\} \su A(k+n,k), \ \text{and} \ B=\cup_{P \in E} P \su \rr^{k+n}.$$ 
We claim that $\dim X=\dim E$. Indeed, the Euclidean metric on $X$ defines a metric on $E$. Since $X$ is bounded, this metric is Lipschitz equivalent to the restriction of a natural metric of $A(k+n,k)$ to $E$. So $\dim X=\dim E$ follows.

The set $B$ is a continuous image of $X\times \R^k$. This product set is a union of countably many compact sets, so $B$ is a union of countably many compact sets,  hence $B$ is Borel. (A more careful argument shows that $B$ is closed.)

By construction, we have
\begin{equation}
 B_t=\pi_t(X)\, \text{ for every }\,  t \in \rr^k.
\label{eqdual}
\end{equation}

If $s=\dim X =\dim E \leq 1$ then the combination of Theorem \ref{hkmthm} and Corollary \ref{corf} implies 
that for $\leb^k$-almost all $t \in \rr^k$, $\dim B_t \geq s$. Then by \eqref{eqdual} we obtain 
that 
for $\leb^k$-almost all $t \in \rr^k$, $\dim \pi_t(X)=\dim X$, which completes the proof of Corollary \ref{proj1}. 

To finish the proof of Corollary \ref{proj2}, we use the combination of Theorem \ref{hthm} and Corollary \ref{corf}. 
We obtain that for $\leb^k$-almost all $t \in \rr^k$, $\dim B_t \geq \frac{s}{k+1}$, where $s=\dim X =\dim E$. By \eqref{eqdual} 
this implies that 
for $\leb^k$-almost all $t \in \rr^k$, $\dim \pi_t(X) \geq \frac{\dim X}{k+1}$, 
which completes the proof of Corollary \ref{proj2}. 

\end{proof}

\begin{remark}
Corollary~\ref{proj2} is obviously sharp for the endpoint $\dim X=(k+1)n$, taking $X=\R^{(k+1)n}$ or in fact any other set with this dimension.
\end{remark}

\begin{remark}
There is a rich literature of results concerning certain restricted families of projections, see e.g.~\cite{JJK}, \cite{FO14}, \cite{Or15}, \cite{OO}. 
We remark that our first projection theorem, Corollary \ref{proj1}, can also be proved using a classical potential-theoretic method, similarly as in  \cite[Proposition 1.2.]{FO13}.
\end{remark}

\section{Fubini property and counterexamples}
\label{s:counter-examples}

Recall that we  
say that $B \su \rr^k \times \rr^n$ has the Fubini property if $\dim B = \al + \be$, where $\al= \dim \proj_{\rr^k} B$, and $\be = \essup(\dim B_t)$. 
In this section first we show that any
non-$\Ga_{\al}$-null  
Borel set has a large subset 
for which the Fubini property holds.
More precisely we prove 
the following corollary of Theorem~\ref{thm1}.

%
%

\begin{cor}
\label{fucor}
Let $0 < \al \leq k$, and let $B \su  \rr^k \times \rr^n$
non-empty Borel set such that $\proj_{\rr^k} B \su T$, 
where $T \su \rr^k$ is an Ahlfors--David $\al$-regular set.   
Then there exists a Borel set $G \su B$ such that $G$ is $\Ga_{\al}$-null, and 
$B'=B \se G$ has the Fubini property. 

Moreover, if $B$ is not $\Ga_{\al}$-null, then $\dim \proj B'=\al$, thus in fact, 
$\dim B'= \al + \be'$, where $\be' = \essup(\dim B'_t)$. 
\end{cor}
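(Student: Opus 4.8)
The idea is to build $G$ by exhausting, for a sequence of levels $q$ decreasing to $\beta=\essup(\dim B_t)$, the parts of $B$ lying over thin section-sets, and then also removing a $\Ga_\al$-null piece coming from Theorem~\ref{thm1} applied to the remainder. First I would set $\beta=\essup(\dim B_t)$; note $B$ is non-empty Borel and $A=\proj_{\rr^k}B\su T$. I would split according to whether $\hal(A)>0$ or $\hal(A)=0$, the latter being the degenerate case handled by the ``moreover'' clause. Assume first $\hal(A)>0$, so $\al=\dim A$ by Ahlfors--David regularity of $T$ (Remark~\ref{ahlregm}), and $\beta$ is given by \eqref{eqessup}.

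\emph{Main construction when $\hal(A)>0$.} For each $i\ge 1$ put $q_i=\beta+1/i$. By definition of $\essup$, the set $A_i=\{t\in A:\dim B_t>q_i\}$ has $\hal(A_i)=0$. I would like to discard $B\cap(A_i\times\rr^n)$ for all $i$, but $A_i$ need not be Borel; however $\{t:\dim B_t>q_i\}$ lies in the $\sigma$-algebra generated by analytic sets (as used in the proof of Corollary~\ref{corf}, via \cite{De}, \cite{MM}), so there is a Borel (indeed $F_\si$) set $N_i\sp A_i$ with $\hal(N_i)=0$; intersecting with $T$ and using that $\hal|_T$ is $\si$-finite, I can take $N_i\su T$ Borel, $\hal(N_i)=0$, $A_i\su N_i$. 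Set $G_1=B\cap((\bigcup_i N_i)\times\rr^n)$. This $G_1$ is Borel. It is $\Ga_\al$-null: for any Lipschitz $f:\rr^k\to\rr^n$, $\{t:(t,f(t))\in G_1\}\su\bigcup_i N_i$, which has $\hal$-measure zero, so \eqref{gamma} holds. Now let $B_1=B\se G_1$. Every section $(B_1)_t$ with $t\in\proj_{\rr^k}B_1$ satisfies $\dim(B_1)_t\le q_i$ whenever $t\notin N_i$, hence $\dim (B_1)_t\le\beta$ for $t\in\proj_{\rr^k}B_1\se\bigcup_i N_i$; since $\bigcup_i N_i$ has $\hal$-measure zero and $\proj_{\rr^k}B_1\su T$, we get $\essup(\dim (B_1)_t)\le\beta$. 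Also $\dim\proj_{\rr^k}B_1=\al$: we have removed only a subset over an $\hal$-null (hence dimension-$<\al$ is false in general, but $\hal$-null inside an AD-regular set still has dimension $\le\al$ — and we only need $\ge$; in fact $\proj_{\rr^k}B_1\sp A\se\bigcup_i N_i$, which has positive $\hal$-measure, so dimension exactly $\al$). If $B_1$ is empty we are done trivially; otherwise apply Theorem~\ref{thm1} to $B_1\su T\times\rr^n$: there is a $\Ga_\al$-null Borel $G_2\su B_1$ with $\dim(B_1\se G_2)=\al+\essup(\dim(B_1)_t)\le\al+\beta$. Put $G=G_1\cup G_2$ (Borel, and $\Ga_\al$-null by Remark~\ref{gacup}) and $B'=B\se G=B_1\se G_2$. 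Then $\dim B'\le\al+\beta$. For the reverse inequality: $\proj_{\rr^k}B'=\proj_{\rr^k}B_1$ has $\hal$-measure $>0$ so has dimension $\al$, and one checks $\essup(\dim B'_t)=\beta'$ equals the value produced by Theorem~\ref{thm1}, whence by Remark~\ref{true} $\dim B'\ge\al+\beta'$. Combined with the equality from Theorem~\ref{thm1}, $\dim B'=\al+\beta'$, i.e. $B'$ has the Fubini property with $\al=\dim\proj_{\rr^k}B'$.

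\emph{The degenerate case $\hal(A)=0$.} Here $\essup(\dim B_t)$ is defined by the limiting formula \eqref{essnull}. If $B$ is $\Ga_\al$-null we may take $G=B$, $B'=\emptyset$ — but the statement asks $B'$ to have the Fubini property, so we should instead only claim the first sentence vacuously or treat $B$ non-$\Ga_\al$-null. If $B$ is not $\Ga_\al$-null, there is a Lipschitz $f$ with $\hal(\{t:(t,f(t))\in B\})>0$; the graph portion $B\cap\gr(f)$ projects to a set of positive $\hal$-measure inside $T$, forcing $\dim\proj_{\rr^k}B=\al$ and $\hal(\proj_{\rr^k}B)>0$, contradicting $\hal(A)=0$. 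So the case ``$\hal(A)=0$ and $B$ not $\Ga_\al$-null'' is empty, and when $B$ is $\Ga_\al$-null the conclusion is: take $G=B$, and interpret $B'=\emptyset$ as vacuously satisfying the Fubini property. Thus the ``moreover'' clause: if $B$ is \emph{not} $\Ga_\al$-null then necessarily $\hal(A)>0$, the main construction applies, $\dim\proj_{\rr^k}B'=\al$, and $\dim B'=\al+\beta'$ with $\beta'=\essup(\dim B'_t)$.

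\emph{Main obstacle.} The delicate point is controlling $\essup(\dim B'_t)$ \emph{after} the two removals, i.e.\ confirming that the $\beta'$ delivered by Theorem~\ref{thm1} applied to $B_1$ is genuinely $\le\beta$ and that removing $G_2$ does not accidentally push it up — it cannot, since $G_2\su B_1$, so sections only shrink — together with the measurability bookkeeping for the level sets $\{t:\dim B_t>q\}$ (they are not Borel, only in the $\sigma$-algebra generated by analytic sets) so that the null covers $N_i$ can be chosen Borel; the $\si$-finiteness of $\hal|_T$ from Remark~\ref{ahlregm} is what makes this clean. The rest is routine once Theorem~\ref{thm1} is invoked as a black box.
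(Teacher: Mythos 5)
Your single-pass strategy has a genuine gap at the decisive step, precisely where you wrote ``one checks $\essup(\dim B'_t)$ equals the value produced by Theorem~\ref{thm1}.'' Theorem~\ref{thm1} applied to $B_1$ yields a $\Ga_\al$-null $G_2$ with $\dim(B_1\se G_2)=\al+\essup(\dim (B_1)_t)$, but that essential supremum is computed for $B_1$, not for $B'=B_1\se G_2$. Removing $G_2$ can only shrink the sections, so indeed $\essup(\dim B'_t)\le \essup(\dim(B_1)_t)$ --- and that is exactly the wrong direction. Equality is what you need for the Fubini property of $B'$, and nothing in the $\Ga_\al$-nullity of $G_2$ enforces it: a $\Ga_\al$-null set is only constrained to meet Lipschitz graphs in $\hal$-null sets, while its \emph{vertical sections} can be large (even full-dimensional) on a positive-measure set of $t$'s. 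The paper's own Remark after Proposition~\ref{p:Brownian} points out that $C\times[0,1]$, with $C$ a two-dimensional Brownian graph, is $\Ga_1$-null yet has $1$-dimensional sections everywhere --- precisely the kind of set $G_2$ could resemble. In that situation $\dim B'=\al+\essup(\dim(B_1)_t)>\al+\essup(\dim B'_t)$, and the Fubini property fails for $B'$. Your ``main obstacle'' paragraph misidentifies the danger: the worry is not that $\essup(\dim B'_t)$ could exceed $\beta$ (it cannot, as you note), but that it could \emph{drop strictly below} $\essup(\dim(B_1)_t)$.

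This is why the paper does not stop after one application. It instead runs a transfinite induction: set $B_0=B$, apply Theorem~\ref{thm1} to obtain $B_{\la+1}=B_\la\se G_\la$ at successor stages, take intersections at countable limits, and observe that the strictly decreasing real sequence $\dim B_\la$ cannot run through all of $\om_1$, so some $\ga<\om_1$ has $\dim B_{\ga+1}=\dim B_\ga$; at that stage $\dim B_\ga=\al+\essup(\dim(B_\ga)_t)$, so $B_\ga$ has the Fubini property, and $\bigcup_{\la<\ga}G_\la$ is a countable union of $\Ga_\al$-null sets, hence $\Ga_\al$-null. Your one-pass argument would need an extra, and false, claim to terminate, so the iteration is not an optional refinement but the heart of the proof. (Your preprocessing step removing $G_1$ is harmless but also unnecessary: the proof of Theorem~\ref{thm1} already handles the thin exceptional set of $t$'s with $\dim B_t>\beta$.) Separately, in the degenerate case where $B$ is $\Ga_\al$-null you cannot take $B'=\emptyset$, since Definition~\ref{fuprop} requires a nonempty set; the paper takes $B'=\{b\}$ a singleton (which trivially has the Fubini property with $\al'=\be'=0$) and $G=B\se\{b\}$, which is still $\Ga_\al$-null.
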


\begin{proof} 
%
If $B$ is $\Ga_{\al}$-null, then Corollary \ref{fucor} clearly holds: 
we can simply take $B'=\{b\}$ to be any singleton contained in $B$. 

Assume now that $B$ is   
not $\Ga_{\al}$-null, thus $\hau^{\al} (\proj_{\rr^k} B) >0$. 
Put $B_0=B$, and $\be_0=\essup(\dim B_t)$. 
We complete the proof using transfinite induction. The idea is the following. 
We apply Theorem \ref{thm1} for $B$ and obtain a $\Gamma_\alpha$-null Borel set $G_0 \su B$ such that  
$\dim (B \setminus G_0)= \al + \be_0$.  
Put $B_1=B \setminus G_0$. Clearly, $B_1$ is Borel and $B_1 \su B \su T \times \rr^n$. 
Since $B$ is not $\gal$-null, we have that $B_1$ is not $\gal$-null, and clearly, 
this implies that $\hal(\proj_{\rr^k} B_1)>0$, thus $\dim \proj_{\rr^k} B_1 =\al$. 
If $\dim B = \dim B_1$, then $B$ has the Fubini property, we are done. 
If $\dim B_1 < \dim B$, we apply Theorem \ref{thm1} for $B_1$. 
We continue this process for $n=0,1,\dots$, and put $B_{\om_0}=\bigcap_{n=0}^{\infty} B_n$. 
If $B_{\om_0}$ has the Fubini property, then we will be done (see the details below in general), 
otherwise, we apply Theorem \ref{thm1} for $B_{\om_0}$ and continue the process. 

Whenever $B_{\la}$ is an already defined Borel non-$\Ga_{\al}$-null set for an ordinal number $\la$, 
let $\be_{\la}=\essup(\dim (B_{\la})_t)$. 
Let $\ga$ be a successor ordinal, say $\ga=\la + 1$.   
Applying Theorem \ref{thm1} for $B_{\la}$ we obtain a  $\Ga_{\al}$-null Borel set $G_{\la}$ such that 
$\dim (B_{\la} \se G_{\la})= \al + \be_{\la}$. 
Then we let $B_{\ga}=B_{\la} \se G_{\la}$. 
Since $B_{\la}$ is not $\Ga_{\al}$-null, we have that $B_{\ga}$ is not $\Ga_{\al}$-null, and thus $\hau^{\al} (\proj_{\rr^k} B_{\ga})>0$. 
If $\ga$ is a countable limit ordinal, let  $B_{\ga}=\bigcap_{\la < \ga} B_{\la}$. 
Then $B_{\ga}=B \se (\bigcup_{\la < \ga} G_{\la})$. 
Since $\ga$ is a countable ordinal, and $B$ is not $\Ga_{\al}$-null, we have by Remark \ref{gacup} that $B_{\ga}$ is not $\Ga_{\al}$-null, and 
thus $\hau^{\al} (\proj_{\rr^k} B_{\ga})>0$.  
It is also clear that for any $\ga$, the construction yields a Borel set 
$B_{\ga}$ with $B_{\ga} \su B \su T \times \rr^n$, so Theorem \ref{thm1} is applicable for $B_{\ga}$. 

We claim that there exists $\ga < \omega_1$ such that $\dim B_{\ga + 1}=\dim B_{\ga}$, where $\om_1$ denotes the smallest uncountable ordinal. 
Assume on contrary that $\dim B_{\ga + 1} <\dim B_{\ga}$ for all $\ga < \omega_1$. 
Since $B_{\la} \su B_{\ga}$ whenever $\ga \leq \la$, this implies that 
$\dim B_{\la} \leq \dim B_{\ga+1} < \dim B_{\ga}$ for any $\ga < \la$, thus 
$\dim B_{\la}: \om_1 \to \rr^{+}$ is strictly decreasing, which is impossible. 

Let $\ga$ be a countable ordinal such that $\dim B_{\ga + 1}=\dim B_{\ga}$. Then by definition, since $\ga+1$ is a successor ordinal, we have 
$\dim B_{\ga+1} =\al + \be_{\ga}$ and then 
$\dim B_{\ga}= \al + \be_{\ga}$, thus $B_{\ga}$ has the Fubini property. 
Moreover, it is easy to check that 
$B_{\ga}=B \se (\bigcup_{\la < \ga} G_{\la})$, and by Remark \ref{gacup}, 
$\bigcup_{\la < \ga} G_{\la}$ is $\gal$-null, which completes the proof. 
\end{proof}

Note that the second part of Corollary \ref{fucor} 
allows $\be'< \be$ 
and does not claim the uniqueness of $\be'$. 
The next proposition shows that 
we cannot guarantee the existence of a subset with the Fubini property such that $\be'= \be$,
and also that in the second part of Corollary~\ref{fucor} the assumption that $B$ is not $\gal$-null
cannot be dropped. 

\begin{prop}\label{p:Brownian}
(i) There exists a compact set $C\su\R\times\R^2$
with $\proj_{\R} C=[0,1]$
such that for every subset $C'\su C$ with the Fubini
property, $\dim \proj_{\R}C' =0$.

(ii) There exists a non-$\Ga_{1}$-null compact set 
$B \su \rr \times \rr^3$ 
with $\al=\dim(\proj_{\R} B)=1$ and
$\be = \essupno(\dim B_t)=1$,
such that 
for any subset $B' \su B$ possessing the Fubini property, $\al'=0$ or $\be'=0$, where
$\al'=\dim(\proj_{\R} B')$ and
$\be' = \essuprime(\dim B'_t)$.
\end{prop}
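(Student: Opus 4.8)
\textbf{Proof proposal for Proposition~\ref{p:Brownian}.}

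For part (i), the plan is to construct $C$ as the graph of a function on $[0,1]$ together with a ``thick'' piece sitting over a small base. Concretely, I would start from a continuous $g:[0,1]\to\R^2$ whose graph has Hausdorff dimension $3$ (Theorem~\ref{fctn} with $n=2$); its graph $\Gamma=\{(t,g(t)):t\in[0,1]\}$ is compact, projects onto $[0,1]$, and has all vertical sections singletons, so $\essupno(\dim\Gamma_t)=0$ while $\dim\Gamma=3$. Thus $\Gamma$ itself fails the Fubini property maximally. The point of (i) is subtler: I must ensure that \emph{every} subset $C'$ with the Fubini property has $\dim\proj_\R C'=0$. The idea is that on $\Gamma$ the ``horizontal'' dimension and the ``dimension gap'' are forced to trade off, because $\Gamma$ is a graph. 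If $A'=\proj_\R C'$ has $\dim A'=\al'>0$, then $C'\su\{(t,g(t)):t\in A'\}$, so all sections of $C'$ are still singletons and $\be'=0$, hence the Fubini property would demand $\dim C'=\al'\le 1$; but I want to arrange $g$ so that $\dim\{(t,g(t)):t\in A'\}\ge$ something strictly bigger than $\dim A'$ for \emph{every} positive-dimensional $A'$. This is exactly the statement that the graph map $t\mapsto(t,g(t))$ strictly increases dimension on every positive-dimensional set --- a property enjoyed, for instance, by (a coordinate pairing of two independent) Brownian or fractional-Brownian sample paths, or by the known deterministic constructions behind Theorem~\ref{fctn} after a suitable self-similar refinement. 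I would pick such a $g$ (this is where I expect to cite or adapt the construction in \cite{MW}/\cite{FH}, checking the ``increases dimension on every subset'' strengthening), set $C=\Gamma$, and then the argument above closes: any $C'$ with the Fubini property and $\al'>0$ leads to $\dim C'=\al'$ yet $\dim C'\ge\dim\{(t,g(t)):t\in A'\}>\al'$, a contradiction, forcing $\al'=0$.

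For part (ii) I would glue the example from (i) to a ``trivial'' large piece. Take $C\su\R\times\R^2$ from (i) and let $B=\big(C\times\{0\}\big)\cup\big(\{0\}\times[0,1]\times\{(0,0)\}\big)\su\R\times\R^3$; more precisely, inside $\R\times\R^3$ put the ``bad graph'' piece $C_1=\{(t,g(t),0):t\in[0,1]\}$ (dimension $3$, sections singletons) and a ``good'' piece $C_2=\{(t,0,0,h(t)):t\in[0,1]\}$ where $h:[0,1]\to\R$ is chosen so that $C_2$ is \emph{not} $\Ga_1$-null --- e.g. $h$ Lipschitz, making $C_2$ a Lipschitz graph, so $\hau^1(\{t:(t,h(t))\in C_2\})=1>0$. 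Then $B=C_1\cup C_2$ is compact, $\proj_\R B=[0,1]$ so $\al=1$, and the sections $B_t$ consist of two points, hence $\dim B_t=0$ for all $t$, so $\essupno(\dim B_t)=0$ --- but wait, this gives $\be=0$, not $\be=1$ as required. To get $\be=1$ I instead take $C_2=\{(t,0,s,0):t\in[0,1],\ s\in[0,1]\}$, a genuine square over the $t$-axis: then $B_t\supseteq\{0\}\times[0,1]\times\{0\}$ has dimension $1$ for every $t$, so $\be=1$, and $C_2$ is certainly not $\Ga_1$-null since it contains the Lipschitz graph $t\mapsto(0,1/2,0)$ over all of $[0,1]$ with full measure. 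So $B=C_1\cup C_2$ has $\al=\be=1$ and is not $\Ga_1$-null.

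Now suppose $B'\su B$ has the Fubini property with $\al'=\dim\proj_\R B'>0$ and $\be'=\essuprime(\dim B'_t)>0$. Write $B'=B_1'\cup B_2'$ with $B_i'=B'\cap C_i$. Since $\be'>0$, the $\al'$-essential supremum of $\dim B'_t$ is positive, and because $(B_1')_t$ is always a single point, the sections of positive dimension must come from $B_2'$; so on an $\hau^{\al'}$-positive set of $t$ we have $\dim(B_2')_t>0$. On the other hand the ``extra'' dimension of $B$ beyond $\al'+\be'$ can only be contributed by $B_1'$, whose sections are singletons, so $\dim B_1'=\dim\proj_\R B_1'\le\al'$ by the Fubini-property bound applied within $B'$ --- hence $\dim B'=\max(\dim B_1',\dim B_2')$ and the Fubini property $\dim B'=\al'+\be'$ forces $\dim B_2'=\al'+\be'$ (since $\dim B_1'\le\al'<\al'+\be'$). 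I then need to derive a contradiction, and the mechanism is the Fubini property for the \emph{product-like} piece combined with the choice of $g$: the catch is that $\al'$ is measured against $B'$, which lives partly in $C_1$, and I want to feed the positive-dimensional base $\proj_\R B_1'$ into the dimension-increasing property of $g$. The clean way is: by the Fubini property of $B'$ and Theorem~\ref{classic}, $\dim B'\ge\al'+\be'$ always, so the content is the reverse inequality; choosing $B'=C_1$ itself (which has $\al'=1,\ \be'=0$, legitimately ``Fubini'' only if $\dim C_1=1$, which is false), or $B'=C_2$ (which has $\al'=1,\ \be'=1,\ \dim=2$, genuinely Fubini), shows the two ``pure'' subsets behave oppositely; the claim is that no \emph{mixed} subset can be Fubini with both $\al'>0$ and $\be'>0$. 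To prove this I would argue: if $\al'>0$ then $\proj_\R B_1'$ or $\proj_\R B_2'$ has dimension $\ge\al'$; in the first case part (i) applied to $B_1'\su C_1$ (noting $B_1'$ has the Fubini property inherited via the decomposition, since its sections are singletons so its only Fubini value is $\al'_1=\dim B_1'$) forces $\dim\proj_\R B_1'=0$ unless $\dim B_1'=\dim\proj_\R B_1'$, and then $\dim B_1'\le\dim B'=\al'+\be'$ with $\be'$ ``wasted'' on $B_1'$ --- leading, after bounding $\dim B_2'\le\dim\proj_\R B_2'+\be'\le\al'+\be'$ and $\dim B_1'\le\al'$, to $\dim B'\le\al'+\be'$ with equality only when one of the two contributions vanishes, i.e. $\al'=0$ or $\be'=0$. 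The main obstacle, and where I will spend the most care, is exactly this bookkeeping: making rigorous that ``the Fubini property cannot be shared across the two glued pieces,'' i.e. that the dimension-increasing pathology of $C_1$ and the honest Fubini behaviour of $C_2$ cannot be combined into a single Fubini subset with both parameters positive. I expect this to require choosing $g$ in part~(i) with the strong property that $\dim\{(t,g(t)):t\in A\}\ge 1+\eta(\dim A)$ for a fixed strictly increasing $\eta$ with $\eta(0)=0$, so that any positive horizontal dimension coming from the $C_1$-part of $B'$ already overshoots $\al'+\be'$ once $\be'$ is independently contributed by $C_2$; verifying that such a $g$ exists (via the random or self-similar constructions cited for Theorem~\ref{fctn}) is the technical heart of the proposition.
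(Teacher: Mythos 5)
Your plan for part (i) is essentially the paper's argument, though your framing is slightly off. You anchor the construction on Theorem~\ref{fctn} (a continuous $g:[0,1]\to\R^2$ with $\dim\graph g=3$), but that property is neither what you need nor what planar Brownian motion actually has (its graph has dimension $2$, not $3$). What you do correctly guess is the operative property: a function whose graph map strictly increases the dimension of every positive-dimensional base set. The paper makes this precise by taking $g$ to be a planar Brownian motion and invoking Kaufman's dimension-doubling theorem: almost surely $\dim g(X)=2\dim X$ for \emph{every} $X\su[0,1]$. Then for $C=\graph g$ and any $C'\su C$ with $\alpha'=\dim\proj_\R C'>0$, you have $\beta'=0$ (sections are singletons) while $\dim C'\ge\dim g(\proj_\R C')=2\alpha'>\alpha'+\beta'$, so the Fubini property fails. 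This is exactly the closing step you outline; you just need to drop the Theorem~\ref{fctn} detour and cite Kaufman.

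Part (ii) has a genuine error in the construction. You take $B=C_1\cup C_2$ with $C_1=\{(t,g(t),0):t\in[0,1]\}$ and $C_2=\{(t,0,s,0):t,s\in[0,1]\}$ a unit square. But $C_2$ is itself a compact subset of $B$ with $\dim\proj_\R C_2=1$, $\essupno(\dim(C_2)_t)=1$, and $\dim C_2=2=1+1$; so $C_2$ has the Fubini property with $\alpha'=\beta'=1$, directly contradicting the conclusion you need to prove. You actually notice in passing that ``$B'=C_2$ \dots genuinely Fubini'' but then restrict attention to ``mixed'' subsets, which is not what the proposition asks for — it quantifies over \emph{all} subsets. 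The structural fix, which the paper uses, is to place the positive-dimensional vertical fibers \emph{over the Brownian graph} rather than over the $t$-axis: take $B=(C\times[0,1])\cup([0,1]\times\{(0,0,0)\})\su\R\times\R^3$, where $C=\graph g$ is from part (i). Now each section $B_t=(\{g(t)\}\times[0,1])\cup\{(0,0,0)\}$ still has dimension $1$, the line segment $[0,1]\times\{(0,0,0)\}$ makes $B$ non-$\Ga_1$-null, and — crucially — any subset $B'$ with $\beta'>0$ has its positive-dimensional fibers sitting over a set of $t$'s on which dimension-doubling applies, giving $\dim B'\ge 2\alpha'+\beta'$ by the Theorem~\ref{classic} mechanism. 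This strictly exceeds $\alpha'+\beta'$ unless $\alpha'=0$. The extra $1$-dimensional piece cannot spoil this because any subset of a line segment has $\beta'=0$. Your bookkeeping paragraph is trying to recover this conclusion by an ad hoc $\max$-of-dimensions decomposition, but it cannot succeed because your $B$ simply does not satisfy the proposition.
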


\begin{proof}
Let $g:[0,1]\to\R^2$ be a Brownian motion.
By Kaufman's dimension doubling theorem 
(see e.~g.~\cite{MoPe}), almost surely, for any 
set $X\su[0,1]$, we have $\dim g(X) = 2\dim X$.
Fix $g:[0,1]\to\R^2$ with this property.

Then $C=\graph g$ satisfies all requirements of (i)
since for any nonempty $C'\su C$, letting 
$\alpha'=\proj_{\R} C'$, we have 
$\beta'=\essuprime(\dim C'_t)=0$ and
$\dim C'\ge 2\alpha'$, which is strictly greater
than $\alpha'+\beta'$ if $\alpha'>0$.

To prove (ii) let 
$B=(C\times [0,1]) \cup ([0,1]\times\{(0,0,0)\})$.
Then clearly $B\su\R\times\R^3$ is a compact set
with 
$\al=\dim(\proj_{\R} B)=1$ and 
$\be = \essupno(\dim B_t)=1$
and it is non-$\Gamma_1$-null since it contains
the line segment $[0,1]\times\{(0,0,0)\}$.

Let $\emptyset\neq B' \su B$, 
$\al'=\dim(\proj_{\R} B')$ and
$\be' = \essuprime(\dim B'_t)$.
Using the dimension doubling property of $g$ and Theorem \ref{classic} we obtain that
if $\be'>0$ then $\dim B'\ge 2\al'+\beta'$,
which is strictly greater
than $\alpha'+\beta'$ if $\alpha'>0$.
\end{proof}
 
%
%

\begin{remark}
The reason that we have included $[0,1] \times \{(0,0,0)\}$ in the set $B$ is that $C \times [0,1]$ alone would be $\Ga_{1}$-null:
this can be seen either by using the second claim
of Corollary~\ref{fucor} or 
more directly, using the results of \cite{ABMP}. 
\end{remark}

Now we show that the Ahlfors--David regularity condition in Theorems \ref{thm1} and \ref{thm2} 
and in Corollary~\ref{fucor}
cannot be dropped: for example, the relaxed condition that 
$0 < \hal(\proj_{\rr^k} B) < \infty$ is not enough. 

\begin{prop}
\label{counter}
For any $0 < \al < 1$, there exists a compact set $B \su \rr^2$ with the following properties:
\begin{itemize}
	\item $B=A_1 \times A_2$ with $0 < \hal(A_i) < \infty$ ($i=1,2$), 
	
	\item for any $G \su B$ Borel $\gal$-null set we have $\dim (B \se G) = \al + 1$.
\end{itemize}
\end{prop}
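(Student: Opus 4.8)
The plan is to construct $B=A_1\times A_2$ as a product of two compact sets of the same dimension $\al$, chosen so that the product set $B$ has dimension $2\al$ rather than $\al+\al$... wait, that would give dimension $2\al$, which I want to equal $\al+1$, so actually I want a product where $\hal(A_1)>0$ but the ``vertical fibers'' $A_2$ carry dimension $1$ from the product structure. Let me reconsider: I want $A_1$ to be $\al$-dimensional with $0<\hal(A_1)<\infty$, and $A_2$ to be a set with $0<\hal(A_2)<\infty$ as well (so $\dim A_2=\al$), yet the product $A_1\times A_2$ should have Hausdorff dimension $\al+1$ — this is the classical phenomenon that the Hausdorff dimension of a product can exceed the sum of Hausdorff dimensions (up to the sum of the upper box dimensions). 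The standard construction is to take $A_1$ and $A_2$ to be Cantor-type sets built with wildly varying ratios at different scales, arranged ``out of phase'' so that at every scale one of the two factors is locally fat. Concretely, one chooses an increasing sequence $n_0<n_1<n_2<\cdots$ and lets $A_1$ be a Cantor set which at scales between $n_{2j}$ and $n_{2j+1}$ looks $\al$-dimensional but at scales between $n_{2j+1}$ and $n_{2j+2}$ looks $1$-dimensional (i.e. full), while $A_2$ does the opposite; then $\dim A_1=\dim A_2=\al$ (the liminf of the local dimensions is $\al$) but $\overline{\dim}_B A_1=\overline{\dim}_B A_2=1$, and the product has $\dim(A_1\times A_2)\ge \dim A_1+\overline{\dim}_B A_2=\al+1$ by the classical product inequality $\dim(X\times Y)\ge \dim X+\overline{\dim}_B Y$.

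Once $B=A_1\times A_2$ is fixed, the core of the argument is the second bullet: every Borel $\gal$-null subset $G\su B$ must be so small that $\dim(B\se G)=\al+1$. First I would note $\proj_{\R}B=A_1$ has $0<\hal(A_1)<\infty$ so $\al=\dim\proj_{\R}B$ and $\essup(\dim B_t)=1$ (every vertical section $B_t$ for $t\in A_1$ equals $A_2$, but $\dim A_2=\al$, not $1$ — hmm). Actually $\essup(\dim B_t)=\dim A_2=\al$, not $1$; so this is precisely an instance where the ``naive'' Fubini fails badly, and Theorem~\ref{thm1} (if applicable) would give a $\gal$-null $G$ with $\dim(B\se G)=\al+\al<\al+1=\dim B$, which is impossible since removing a set can't lower dimension below... wait, it can lower dimension, that's the whole point. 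The point of the Proposition is the opposite: here removing any $\gal$-null set does \emph{not} help, $\dim(B\se G)$ stays at $\al+1$. So the claim is that no $\gal$-null set is ``large enough'' in $B$ to cut the dimension.

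The key step is therefore: show that for any $\gal$-null Borel $G\su B$, the set $B\se G$ still contains (or projects onto / fibers over) enough of the product structure to retain dimension $\al+1$. The natural route: a $\gal$-null set, by definition, meets every Lipschitz graph $\{(t,f(t)):t\in\R\}$ in an $\hal$-null subset of $\R$. In particular, for each fixed $a\in A_2$, the horizontal line $\R\times\{a\}$ is a Lipschitz graph (over the second coordinate — here I should set things up so that the relevant Lipschitz functions go $\R\to\R$, matching $k=1$, $n=1$), so $\{t\in A_1:(t,a)\in G\}$ has $\hal$-measure zero; by Fubini for the measure $\hal|_{A_1}\times(\text{counting or }\hal|_{A_2})$... this requires more care since $A_2$ is uncountable. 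Instead I would argue by Fubini's theorem applied to $\hal|_{A_1}\otimes \mu$ where $\mu$ is an Ahlfors-regular-type measure on $A_2$: the slices $G\cap(\R\times\{a\})$ have $\hal|_{A_1}$-measure zero for \emph{every} $a$, hence $(\hal|_{A_1}\otimes\mu)(G)=0$, so $B\se G$ has full $(\hal|_{A_1}\otimes\mu)$-measure in $B$. Then I invoke a mass-distribution / slicing argument: $B\se G$ has full measure for a measure whose ``$A_1$-marginal'' is $\hal|_{A_1}$ and which, on a.e. vertical fiber, restricts to the full set $A_2$; combined with the product-dimension lower bound (now applied to $B\se G$, using that its projection to $A_1$ has full $\hal$-measure hence box dimension still... ) one concludes $\dim(B\se G)\ge \al+\overline{\dim}_B A_2$ wait I need box dimension on the fiber side. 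Let me instead phrase the final step as: $B\se G$ contains a product-like subset $A_1'\times A_2$ with $\hal(A_1')>0$ — no, that's false in general. The correct final step is a slicing lower bound: if a measure $\nu$ on $\R^2$ has a ``Frostman'' behavior of exponent $\al+1$ (coming from $\hal|_{A_1}\otimes\mu$ inheriting the product structure's efficiency), and $\nu(B\se G)>0$, then $\dim(B\se G)\ge\al+1$.

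I expect the main obstacle to be making the product-dimension lower bound survive the passage to $B\se G$: the inequality $\dim(X\times Y)\ge\dim X+\overline{\dim}_BY$ uses the full set $Y$, and after removing $G$ the vertical fibers of $B\se G$ are the sets $A_2\se\{a:(t,a)\in G\}$ which vary with $t$ and could a priori each be small — it is only ``on average'' (in the measure sense) that they are full. So I would need to replace the box-counting product inequality by a measure-theoretic version: construct directly a measure $\nu$ on $B$ supported on $B\se G$ (possible since $G$ is $\nu$-null for $\nu=\hal|_{A_1}\otimes\mu$) and verify a Frostman condition $\nu(B(x,r))\lesssim r^{\al+1}$ for $x\in B$, $0<r\le1$ — and \emph{this} is where the carefully staged ``out of phase'' construction pays off, because at every scale $r$, one of the two factor-measures is spread out enough (behaving like Lebesgue at that scale) that the product measure of a ball of radius $r$ is at most $r^\al\cdot r=r^{\al+1}$ (one factor contributing $r^\al$ from its $\al$-regular phase, the other contributing $r$ from its fat phase). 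Then the mass distribution principle gives $\hau^{\al+1}(B\se G)>0$, hence $\dim(B\se G)\ge\al+1$, and the reverse inequality $\dim(B\se G)\le\dim B=\al+1$ is automatic. The remaining routine verifications are: choosing the gap sequence $n_j$ so that the local-dimension liminf of each $A_i$ is exactly $\al$; checking $0<\hal(A_i)<\infty$ (a standard mass-distribution estimate for such Cantor sets, possibly needing the Cantor sets to be only ``$\al$-dimensional with positive finite measure'' which is itself a known delicate point — one may instead settle for $\dim A_i=\al$ with the product having dimension $\al+1$, and then thicken slightly, but the statement demands $0<\hal(A_i)<\infty$, so I would cite or adapt the construction of $\al$-sets with positive finite measure and prescribed box dimension $1$, e.g. from Falconer's book); and checking that $\proj_{\R}(B\se G)$ still has positive $\hal$-measure (immediate from $\nu$ having $A_1$-marginal $\hal|_{A_1}$ and $G$ being $\nu$-null). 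Note that in the definition of $\gal$-null with $k=1$, the Lipschitz maps are $f:\R\to\R^n$ with $n=1$ here, and the relevant graphs include all horizontal lines $\R\times\{a\}$ after swapping coordinates appropriately, so the argument that $G$ has measure zero on each such line is exactly the defining property applied to constant functions — the one subtlety being that the definition fixes which coordinate is the ``domain'', so I should present $B\su\R^2$ with the first coordinate as domain and use that constant functions $f\equiv a$ are Lipschitz, giving $\hal(\{t:(t,a)\in G\})=0$ directly.
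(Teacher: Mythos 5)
Your proposal is essentially correct and follows the same overall strategy as the paper's proof. The paper cites Federer (\cite[Section 2.10.29]{Fe}) for precisely the ``out-of-phase'' Cantor-type construction you sketch, then uses the constant Lipschitz function observation plus Fubini for the product measure $\hau^{\al}\times\hau^{\al}$ (exactly your $\hal|_{A_1}\otimes\mu$), and concludes with a mass-distribution-type lower bound derived from a uniform Frostman-type estimate for the product measure on $B$. Two small remarks on your write-up: (a) the Frostman estimate coming out of the Federer construction is not a single exponent $\al+1$ bound --- that would force $\hau^{\al+1}(B)>0$, which the construction does not give --- but rather the family of estimates $(\hau^{\al}\times\hau^{\al})(B\cap S)\le C\eps(\diam S)^{\ga}$ for every $\ga<\al+1$ and $\eps>0$ at small enough scales; this still yields $\hau^{\ga}(B')>0$ (in fact $=\infty$) for every $\ga<\al+1$ and every $B'\su B$ of positive product measure, hence $\dim B'\ge\al+1$, which is all you need. (b) The projection/marginal remark at the end of your proposal is not actually needed once you have $(\hau^{\al}\times\hau^{\al})(B\se G)>0$; positive product measure alone feeds into the Frostman lower bound. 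These are minor points of precision, not gaps: the key ideas --- out-of-phase Cantor sets with $0<\hal(A_i)<\infty$, horizontal slices of $G$ null by the constant-function case of $\gal$-nullity, Fubini to get positive product measure of $B\se G$, and a Frostman/mass-distribution argument uniform over all positive-measure subsets --- are exactly those of the paper.
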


Clearly, for the above $B$, 
the conclusions of 
Theorems \ref{thm1} and \ref{thm2} 
and Corollary~\ref{fucor}
do not hold.

\begin{proof}
The following construction can be found in \cite[Section 2.10.29]{Fe}.  

For any $0 < \al < 1$, there exist compact sets $A_1, A_2 \su \rr$ with $0 < \hal(A_i)<\infty$ ($i=1,2$) 
such that for $B=A_1 \times A_2$ the following is satisfied: 

For any $\ga < \al + 1$ and $\ep > 0$ there exists $\de>0$ (depending on $\al$, $\ep$ and $\ga$) such that for any $S \su \rr^2$ with $\diam S \leq \de$, we have 
\begin{equation}
(\hau^{\al} \times \hau^{\al})(B \cap S) \leq C \cdot \ep \cdot (\diam S)^{\ga}, 
\label{eqdiamtrick}
\end{equation}
where $C$ is a constant depending only on $\al$. 

We will check that \eqref{eqdiamtrick} implies that for every 
$B' \su B$ with $(\hau^{\al} \times \hau^{\al})(B')>0$, we have
$\dim B' = \alpha+1$.

First note that the upper bound $\alpha+1$ follows from the fact that $B'\subset A_1\times A_2 \subset A_1 \times \mathbb{R}$, and $A_1 \times \mathbb{R}$ has Hausdorff dimension $\alpha+1$.

Fix $\ga < \al + 1$, $\ep>0$, and consider a covering $B' \su \bigcup S_i$ with $\diam S_i \leq \de$, where $\de$ is as above. 
Then by \eqref{eqdiamtrick}, we have that 
$$\sum (\diam S_i)^{\ga} \geq \sum \frac{1}{C \cdot \ep} (\hau^{\al} \times \hau^{\al})(B \cap S_i) 
\geq \sum \frac{1}{C \cdot \ep} (\hau^{\al} \times \hau^{\al})(B' \cap S_i),$$ 
and since $B' \su \bigcup S_i$ and $\hau^{\al} \times \hau^{\al}$ is an outer measure, we have that 
$$\sum (\diam S_i)^{\ga} \geq \frac{1}{C \cdot \ep} (\hau^{\al} \times \hau^{\al})(B').$$ 
Taking infimum over the coverings it follows that $$\hau^\gamma(B')\ge \frac{1}{C \cdot \ep} (\hau^{\al} \times \hau^{\al})(B').$$
Therefore $\dim B'\ge \gamma$ for every $\gamma<\alpha+1$, hence $\dim B'= \alpha+1$.

To conclude the proof of Proposition \ref{counter}, we show that if $G$ is a Borel $\gal$-null set and $B'= B \se G$, 
then $(\hau^{\al} \times \hau^{\al})(B')>0$, and this is enough. 

Let $G$ be a Borel $\gal$-null set. Then we have $\hal(\{ t \in A_1: (t,x) \in G \})=0$ for each $x \in A_2$,  
considering constant (Lipschitz) functions in \eqref{gamma}. 
This immediately implies that for each $x \in A_2$, $\hal(\{t \in A_1: (t,x) \in B'\})=\hal(A_1)$ which is positive and finite.   
Using Fubini's theorem (see e.g. \cite[Theorem 2.6.2]{Fe}) for the Borel set $B'$ we obtain that 
$$(\hau^{\al} \times \hau^{\al})(B') = \int_{A_2} \hal(\{t \in A_1: (t,x) \in B'\}) d\hal(x) >0,$$
which completes the proof.
\end{proof}

\section{Proofs of the main results}
\label{pfthm2}

In this section we prove Theorems \ref{thm2} and \ref{thm1} subject to two lemmas, Lemma \ref{meas} and Lemma \ref{lem1}, 
which will be proved in Section \ref{pfmeas} and Section \ref{pflem1}, respectively. 
While Lemma \ref{meas} is merely a (highly non-trivial) technical lemma aimed to ensure measurability of sets in our proofs, 
the majority of the proof of Theorem \ref{thm2} can be found in the proof of Lemma \ref{lem1}. 

We start with  two observations. 
\begin{lemma} In Theorem \ref{thm2}, we can assume without loss of generality that 
\label{obs}
\begin{enumerate}[(i)]
	\item 
	\label{noganull}
	 $B$ is not $\gal$-null, and thus $\hal(\proj_{\rr^k} B)>0$,
	
	\item
	\label{betan}
	$\be < n$. 
	
\end{enumerate}
\end{lemma}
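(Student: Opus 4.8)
The plan is to prove the two reductions in Lemma~\ref{obs} separately, each by a short argument that replaces $B$ with a better-behaved Borel set without affecting the conclusion of Theorem~\ref{thm2}.

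\textbf{Reduction (i): we may assume $B$ is not $\gal$-null.} First I would observe that if $B$ itself is $\gal$-null, then the conclusion of Theorem~\ref{thm2} is trivially satisfied with $G=B$ (then $B\se G=\emptyset$ and $\dim\emptyset=0\le \al+\be$). So the only content is that, \emph{in the course of the proof}, we are allowed to assume $B$ is not $\gal$-null. The point is that $\gal$-nullness of $B$ is equivalent to $\hal(\proj_{\rr^k}B)=0$ \emph{under the Ahlfors--David regularity hypothesis}: if $\hal(\proj_{\rr^k}B)>0$, then since $\proj_{\rr^k}B\su T$ and $\hal|_T$ is $\sigma$-finite (Remark~\ref{ahlregm}), we can find a point of positive lower density and produce a constant Lipschitz function $f\equiv x_0$ (for suitable $x_0\in\rr^n$) witnessing that $B$ is not $\gal$-null; conversely, if $\hal(\proj_{\rr^k}B)=0$ then for \emph{every} Lipschitz $f$ the set $\{t:(t,f(t))\in B\}$ is a subset of $\proj_{\rr^k}B$ and hence $\hal$-null, so $B$ is $\gal$-null. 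Thus the two stated conditions in (i) are equivalent to each other and, once we know $B$ is not $\gal$-null, the implication to $\hal(\proj_{\rr^k}B)>0$ is immediate. I would spell this out carefully, being slightly careful that ``constant function'' needs the constant to land in the right fiber; it is cleaner simply to note that $\{t:(t,x)\in B\}\su\proj_{\rr^k}B$ for the constant map $f\equiv x$, so $\hal$-positivity of $\proj_{\rr^k}B$ forces $B$ to be non-$\gal$-null by Fubini for $\hal|_T\times$(counting on fibers), or more simply that the negation is what we just proved.

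\textbf{Reduction (ii): we may assume $\be<n$.} Here I would argue that if $\be=n$, the conclusion is almost automatic: we want a $\gal$-null $G\su B$ with $\dim(B\se G)\le \al+n$. But $B\su T\times\rr^n\su\rr^k\times\rr^n$, and since $T$ is Ahlfors--David $\al$-regular, $\dim(T\times\rr^n)=\al+n$ (product inequality $\dim(T\times\rr^n)\le\dim T+n=\al+n$, and the reverse by Theorem~\ref{classic} applied with the full fiber $\rr^n$, or simply because $T\times\rr^n$ is $\sigma$-finite with respect to $\hal\times\leb^n$). Hence $\dim B\le\al+n$ already, and we may take $G=\emptyset$, which is vacuously $\gal$-null. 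So in proving Theorem~\ref{thm2} we lose nothing by assuming $\be<n$ strictly.

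\textbf{Main obstacle.} Neither reduction is deep; the only subtlety is making sure the equivalence in (i) is genuinely an equivalence and that we are permitted to invoke it at the start of the main proof, i.e. that assuming ``$B$ not $\gal$-null'' does not secretly weaken the theorem. The mildly delicate point is the direction ``$\hal(\proj_{\rr^k}B)>0\implies B$ not $\gal$-null'': this uses that $\hal|_T$ is $\sigma$-finite so that a Fubini/section argument applies to the Borel set $B$, yielding some $x\in\rr^n$ with $\hal(\{t:(t,x)\in B\})>0$, and then the constant map $f\equiv x$ witnesses non-$\gal$-nullness. I would present this as the one line requiring Remark~\ref{ahlregm}, and note that Ahlfors--David regularity (rather than just $\sigma$-finiteness of $\hal|_{\proj B}$) is not even needed for these two reductions — it is only needed later, which is consistent with the structure announced before Proposition~\ref{counter}.
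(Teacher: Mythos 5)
Your two core reductions are the same as the paper's: if $B$ is $\gal$-null take $G=B$, and if $\be=n$ take $G=\emptyset$ (since $B\su T\times\rr^n$ and $\dim T=\al$ force $\dim B\le\al+n$). The derivation of ``$\hal(\proj_{\rr^k}B)>0$'' from ``$B$ not $\gal$-null'' is also correctly identified as the easy contrapositive: if $\hal(\proj_{\rr^k}B)=0$, then for every Lipschitz $f$ the set $\{t:(t,f(t))\in B\}\su\proj_{\rr^k}B$ is $\hal$-null, so $B$ is $\gal$-null. So the lemma itself is proved.

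However, the extra equivalence you insert in (i) — that $\hal(\proj_{\rr^k}B)>0$ conversely implies $B$ is not $\gal$-null — is \emph{false}, and your argument for it does not work. A Fubini/section argument with respect to $\hal|_T\times\leb^n$ only tells you about $\int\hal(\{t:(t,x)\in B\})\,dx$, and this integral can vanish even when $\hal(\proj_{\rr^k}B)>0$: for example, if $B=\graph(g)$ is the graph of a continuous nowhere approximately differentiable function $g:[0,1]\to\rr^n$ (such as a Brownian path), then $\proj_{\rr}B=[0,1]$ has full $\hau^1$-measure, yet $\{t:(t,x)\in B\}=\{t:g(t)=x\}$ is $\hau^1$-null for every constant $x$, and in fact $\{t:f(t)=g(t)\}$ is $\hau^1$-null for \emph{every} Lipschitz $f$ (agreement on a positive-measure set would force approximate differentiability at density points), so $B$ is $\Ga_1$-null. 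The paper's Proposition~\ref{p:Brownian} and the remark after it make essentially this observation. Fortunately, this converse is never needed: the lemma and the proof of Theorem~\ref{thm2} only require the one implication you proved correctly. You should simply delete the claim of equivalence and the ``positive lower density / constant function'' construction, and with it the assertion that this is the ``mildly delicate point'' — there is no delicate point here; both parts of the lemma are immediate.
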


\begin{proof}
Clearly, Theorem \ref{thm2} trivially holds if $B$ is $\gal$-null, we can take $G=B$. 

If $\be=n$ in Theorem \ref{thm2}, then the statement follows trivially: Since $B \su T \times \rr^n$, we have that 
	$\dim B \leq \al + n$, and we can take $G=\emptyset$. 
	
\end{proof}


\begin{lemma}
\label{cup}
Let $B \su T \times \rr^n$ such that $\dim B_t \leq \be$ for all $t \in T$, and assume that $B$ is of the form $B=\cup_{i=1}^{\infty} B_i$. If Theorem \ref{thm2} holds for each $B_i$, $i=1,2,\dots$ (with $T$ and $\be$ fixed as above), then it also holds for $B$. 
\end{lemma}


\begin{proof}
Since $\dim B_t \leq \be$ for each $t \in T$, we have that $\dim (B_i)_t \leq \be$ for each $t\in T$, for each $i$. By Theorem \ref{thm2} applied for each $B_i$,  there exist Borel $\gal$-null sets $G_i \su B_i$ such that 
$\dim (B_i \se G_i) \leq \al + \be$. By Remark \ref{gacup}, $G=\bigcup_{i=1}^{\infty} G_i$ is Borel $\gal$-null, and  
$$\dim (B \se G)=\sup_i \,\dim (B_i \se G) \leq \sup_i \,\dim (B_i \se G_i) \leq \al + \be,$$ 
which completes the proof. 
\end{proof}

Now we introduce more simplifying assumptions in Theorem \ref{thm2}.

\begin{lemma}
\label{aa}
We can assume without loss of generality that 
\begin{enumerate}[(i)]
\item 
\label{a1}
$\proj_{\rr^n} B \su [0,1]^n$,

\item
\label{a2}
$\hal(T) \leq 1$. 	
\end{enumerate}

\end{lemma}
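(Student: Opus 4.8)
The plan is to reduce Theorem~\ref{thm2} to the case of a bounded set lying over a normalized Ahlfors--David regular set, exploiting the scaling invariance of the problem and the countable-union reduction already established in Lemma~\ref{cup}. For part~(\ref{a1}), note that $\rr^n=\bigcup_{z\in\zz^n}(z+[0,1]^n)$, so $B=\bigcup_{z\in\zz^n}B^{(z)}$ where $B^{(z)}=B\cap(T\times(z+[0,1]^n))$. Each $B^{(z)}$ is Borel, is contained in $T\times\rr^n$ with the same Ahlfors--David regular set $T$, and still satisfies $\dim (B^{(z)})_t\le\be$ for all $t\in T$. If the conclusion of Theorem~\ref{thm2} holds for every such translated piece, then by Lemma~\ref{cup} it holds for $B$; and translating $B^{(z)}$ by $-z$ in the $\rr^n$-coordinate is an isometry that preserves Hausdorff dimension, the $\gal$-nullness of subsets (since it maps Lipschitz graphs to Lipschitz graphs), and all the relevant quantities. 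Hence we may assume $\proj_{\rr^n}B\su[0,1]^n$.

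For part~(\ref{a2}), I would cover $\rr^k$ by countably many dyadic cubes $\{Q_j\}$ of side length, say, $1$, and for each set $T_j=T\cap Q_j$ and $B^{(j)}=B\cap(T_j\times\rr^n)$. Again $B=\bigcup_j B^{(j)}$, so by Lemma~\ref{cup} it suffices to treat each $B^{(j)}$. The point is that $T_j$ is a Borel subset of $\rr^k$ with $\hal(T_j)<\infty$, and — using the upper bound in \eqref{ahldef} applied at a single point of $T_j$ with $r=\diam Q_j$ — one gets $\hal(T_j)\le\hal(T\cap B(t,\diam Q_j))\le c'(\diam Q_j)^\al$, which is a fixed finite constant; after rescaling the $\rr^k$-coordinate by a suitable factor $\lambda>0$ (which multiplies $\hal$ by $\lambda^\al$ but again preserves dimension, $\gal$-nullness, Ahlfors--David $\al$-regularity up to constants, and the section-dimension hypothesis), we can arrange $\hal(\lambda T_j)\le 1$. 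One should check that $\lambda T_j$ is still Ahlfors--David $\al$-regular: scaling changes $c,c'$ by $\lambda^{-\al}$ and $\lambda^\al$ respectively and the condition ``for $0<r\le 1$'' is only strengthened for $\lambda<1$ (for $\lambda>1$ one restricts to smaller radii, which is fine since $T_j\su Q_j$ is bounded and the small-radius estimates are what matter; alternatively combine the two rescalings so that both normalizations hold simultaneously). Then rescale the $\rr^n$-coordinate as well to restore~(\ref{a1}).

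The routine but mildly delicate point is to verify that all the structures in play are invariant (or covariant with harmless constant changes) under the two operations used — translation in $\rr^n$ and dilation of coordinates — namely: Borel measurability, Hausdorff dimension, the hypothesis $\dim B_t\le\be$ for all $t$, $\gal$-nullness of a candidate exceptional set $G$, and Ahlfors--David $\al$-regularity of the base set. Each of these is straightforward: similarities map Lipschitz graphs to Lipschitz graphs (with a controlled change in Lipschitz constant), $\hal$ scales as a power of the dilation factor so vanishing of $\hal$ is preserved, and \eqref{ahldef} transforms with $c,c'$ replaced by constant multiples of themselves.

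I expect the main (minor) obstacle to be bookkeeping: making sure that after performing the $\rr^n$-translation reduction and the $\rr^k$-localization-plus-rescaling reduction one does not undo the other, i.e.\ choosing the order of operations and the dilation factors so that \emph{both} normalizations $\proj_{\rr^n}B\su[0,1]^n$ and $\hal(T)\le 1$ hold at the same time. This is handled by first localizing and rescaling in $\rr^k$ to get $\hal(T)\le 1$, then translating and rescaling in $\rr^n$ to get $\proj_{\rr^n}B\su[0,1]^n$ — the second step does not touch the $\rr^k$-coordinate at all, so it leaves $\hal(T)\le 1$ intact. No genuinely hard mathematics is involved here; this lemma is purely a normalization step preparing the ground for the substantive argument in Lemma~\ref{lem1}.
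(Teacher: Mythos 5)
Your overall strategy --- decompose $B$ into countably many pieces via Lemma~\ref{cup} and then rescale --- is the same as the paper's. For~(i) this is exactly the paper's argument. For~(ii) the paper invokes Remark~\ref{ahlregm} ($\sigma$-finiteness of $\hal|_T$), which is precisely your dyadic decomposition together with the upper bound in \eqref{ahldef}, so there is no genuine difference in route.

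That said, two details in your write-up are not right as stated. First, the scaling computation is off. Under $x\mapsto\lambda x$ the constants $c,c'$ in \eqref{ahldef} do \emph{not} change; what changes is the admissible radius range, from $0<r\le 1$ to $0<r\le\lambda$. So for the contraction $\lambda<1$ that you need in order to shrink the measure, the condition ``for $0<r\le 1$'' is \emph{weakened} (shorter range), not strengthened --- you have the two cases $\lambda<1$ and $\lambda>1$ swapped. This slip is harmless because the proof of Lemma~\ref{lem1} only ever invokes \eqref{ahl} at radii $2^{-l}$ with $l\ge l_0$ arbitrarily large, but the reason you give is wrong. (Also, $\diam Q_j=\sqrt k>1$ for $k\ge 2$, so \eqref{ahldef} does not apply directly at $r=\diam Q_j$; you should cover $Q_j$ by finitely many radius-$1$ balls.)

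Second, and more substantively, $T_j=T\cap Q_j$ is in general \emph{not} Ahlfors--David regular: the lower bound in \eqref{ahldef} can fail at $t\in T_j$ near $\partial Q_j$, where $T\cap B(t,r)$ may lie almost entirely outside $Q_j$. Your parenthetical ``one should check that $\lambda T_j$ is still Ahlfors--David $\al$-regular'' treats the homothety but tacitly assumes the cube intersection preserved regularity, and that is where the real gap lies. The fix --- which I believe is also what the paper's terse proof intends --- is to keep the \emph{original} $T$ as the reference set, so \eqref{ahl} always refers to $T$ and never to a truncated piece, and to observe that after restricting $B$ to $T_j\times\rr^n$ the projection $A=\proj_{\rr^k}B$ is bounded; the balls in Lemma~\ref{lem1} are all centered on $A$ with small radius, so only a bounded, finite-$\hal$-measure, part of $T$ is ever used, and a homothety then normalizes that measure to be at most $1$. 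This avoids any need for the truncated set itself to be regular.
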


\begin{proof} \ 
\begin{enumerate}[(i)]  
\item
This is trivial using Lemma \ref{cup}, 
and that Hausdorff dimension is preserved under homotheties.

\item
It follows from Lemma \ref{cup} and Remark \ref{ahlregm}
 that we can assume that $\hal(T)< \infty$. 
Then we can replace $B$ with a homothetic copy $B'$ and $T$ with a homothetic copy $T'$ 
such that for $A'=\proj_{\rr^k} B'$, $A' \su T'$ and $\hal(T') \leq 1$. 

\end{enumerate}
\end{proof}

Fix a Borel set $B \su T \times [0,1]^n$, where $T \su \rr^k$ is an  Ahlfors--David $\al$-regular Borel set such that $\hal(T) \leq 1$. Let $\be \in (0,n)$ such that 
\begin{equation}
\label{smallerbeta}
\dim B_t < \be \ \text{for all} \ t \in A,
\end{equation} 
where $A=\proj_{\rr^k} B \su \rr^k$.  Clearly, we have $A \su T$. 
By Remark \ref{gacup} and \eqref{betan} of Lemma \ref{obs}, 
it is enough to show that there exists a Borel set $G \su B$ such that $G$ is $\Ga_{\al}$-null, and $\dim (B \setminus G) \leq \al + \be$. 

We use the following properties of $T \su \rr^k$. 
\begin{equation}
\hal(A) \leq \hal(T) \leq 1,
\label{tdef}
\end{equation}
and there exists $c>0$ such that for all $t \in T$, $0<r\leq 1$, 
\begin{equation}
c \cdot r^{\al} \leq \hau^{\al}(T \cap B(t,r)).
\label{ahl}
\end{equation}

For our convenience, we introduce a new measure in place of Hausdorff measure using translated copies of open dyadic cubes. 

Let 
\begin{equation}
U=\{(u_1,\dots,u_n) \in \rr^n, u_i \in \{0, 1/2\} \ (i=1,\dots,n)\}. 
\label{equ}
\end{equation}

For $u \in U$, and $l \geq 1$, let 

\begin{equation}
\mathcal{\ti{D}}_{u,l}=\left\{ \prod_{j=1}^n \left(\frac{m_j}{2^l},\frac{m_j+1}{2^l}\right): (m_1,\dots,m_n) \in u + \zz^n \right\},
\label{opendul}
\end{equation}

\begin{equation}
\mathcal{D}_l=\bigcup_{u \in U} \mathcal{\ti{D}}_{u,l} =\left\{ \prod_{j=1}^n \left(\frac{m_j}{2^l},\frac{m_j+1}{2^l}\right): (m_1,\dots,m_n) \in U + \zz^n \right\},
\label{opendl}
\end{equation}
the family of open cubes which are translates of $2^{-l}$-dyadic cubes by $u \cdot 2^{-l}$ for  some $u \in U$, and 
\begin{equation}
\mathcal{D}=\bigcup_{l=1}^{\infty} \mathcal{D}_l. 
\label{opend}
\end{equation}

For a set $X \su \rr^n$, $s \geq 0$ and $\de>0$, let
$$\ti{\net}^s_{\de}(X)=\inf \left\{ \sum_{i=1}^{\infty} (\diam(D_i))^s : X \su \bigcup_{i=1}^{\infty} D_i, \ D_i \in \mathcal{D}, \ \diam(D_i) \leq \de \right\}$$ and 
$\net^s(X)=\lim_{\de \to 0} \ti{\net}^s_{\de}(X)$. 

It is not hard to check that there exists a constant $b(n,s) \neq 0$ depending only on $n$ and $s$ such that for every $X \su \rr^n$, 
$\hau^s(X) \leq \net^s(X) \leq b(n,s) \hau^s(X)$.

We will use the following notation. 
For 
$$D=\left(\frac{m_1}{2^l},\frac{m_1+1}{2^l}\right) \times \left(\frac{m_2}{2^l},\frac{m_2+1}{2^l}\right) \times \dots \times  \left(\frac{m_n}{2^l},\frac{m_n+1}{2^l}\right) \in \mathcal{D},$$ let 
\begin{equation}
s_D=2^{-l} \ \text{and} \ x_D=\left(\frac{m_1}{2^l}, \frac{m_2}{2^l}, \dots, \frac{m_n}{2^l} \right)
\label{sqxq}
\end{equation}
denote the side length and the lower left corner of $D$.

We will restrict ourselves to small enough cubes.
For any $\ep>0$, let $l_0=l_0(\ep)$ be the smallest integer such that 
\begin{equation}
\sum_{l=l_0}^{\infty} 2^{-l\ep} \leq c \cdot \ep^2,
\label{lcond}
\end{equation}
where $c$ is from \eqref{ahl}. 
Let 
\begin{equation}
\ti{\mathcal{D}}=\ti{\mathcal{D}}_{\ep}=\{D \in \mathcal{D}: s_D \leq 2^{-l_0}\}.
\label{modd}
\end{equation}

We would like to find covers for the sections of $B$ consisting of cubes from $\ti{\mathcal{D}}$. 
By 
\eqref{smallerbeta} we have that for every $t \in A$ and every $\ep>0$ there exists a countable cover 
$$B_t \su \bigcup_{i \in I_t} D_i^t \ \text{such that} \ D_i^t \in \ti{\mathcal{D}} \ \text{and} \ \sum_{i \in I_t} \left(\diam(D_i^t)\right)^{\be} \leq \ep.$$ 
We need more: we need to find a cover with the above properties such that $t \mapsto \{D_i^t\}_{i \in I_t}$ is a ($\hal$-)measurable map on $A$. 
This will be the content of Lemma \ref{meas} below. First we need to introduce some definitions. Let

\begin{equation}
H=\mathcal{P}(\ti{\mathcal{D}}), \ \text{the space of all subsets of} \ \ti{\mathcal{D}},
\label{modh}
\end{equation}
equipped with the natural topology defined via considering $H$ as $\{0,1\}^{\ti{\mathcal{D}}}$ and taking the product topology. 
Clearly, $H$ is compact. 

For $h \in H$, let 
\begin{equation}
\u(h)=\bigcup_{D \in h} D,
\label{union}
\end{equation} 
and 
\begin{equation}
\s(h)=\sum_{D \in h} \diam(D)^{\be}.
\label{diam}
\end{equation}

Note that in the above definition, the integer $l_0$, and thus the space $H$ depends on $\ep$. 

The proof of the following lemma was communicated to us by Zolt\'an Vidny\'anszky. The proof will be postponed to Section \ref{pfmeas} and the Appendix.  
\begin{lemma}
\label{meas}
For any $\ep>0$, 
there exists a Borel set $F \su A$ with $\hal(F)=\hal(A)$ and a Borel measurable function 
$\phi: F \to H$ such that for each $t \in F$, $B_t \su \u(\phi(t))$, and $\s(\phi(t)) \leq \ep$. 
\end{lemma}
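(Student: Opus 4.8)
The plan is to produce, for each $t\in A$, a cover of $B_t$ by cubes in $\ti{\mathcal D}$ with total $\be$-weight at most $\ep$, and then to extract a \emph{measurable} selection of such covers. First I would fix $\ep>0$ and, for each $t\in A$, consider the set
\[
\mathcal C_t=\Big\{h\in H:\ B_t\su \u(h),\ \s(h)\le \ep\Big\}\su H.
\]
By \eqref{smallerbeta} (i.e.\ $\dim B_t<\be$) together with the comparison $\hau^\be\le \net^\be\le b(n,\be)\hau^\be$ and the fact that $\ti{\mathcal D}$ contains cubes of all sufficiently small side lengths, $\mathcal C_t\neq\emptyset$ for every $t\in A$; indeed we can start from \emph{any} admissible cover and simply throw away the (finitely many) large cubes and subdivide, so that all cubes used have $s_D\le 2^{-l_0}$. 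The goal is then to show that the multifunction $t\mapsto \mathcal C_t$ admits a Borel measurable selection on a Borel set $F\su A$ of full $\hal$-measure. Setting $\phi(t)$ equal to this selection gives the lemma.

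The key steps, in order, would be: (1) \emph{Measurability of the graph.} Show that the set $\{(t,h)\in A\times H: B_t\su\u(h)\}$ is in a suitable $\sigma$-algebra. This is where one must be careful: $B$ is Borel, $\u(h)$ is open for each $h$, and ``$B_t\su\u(h)$'' means ``$B_t\cap(\rr^n\se\u(h))=\emptyset$'', i.e.\ the fibre of the Borel set $B\cap(A\times(\rr^n\se\u(h)))$ over $t$ is empty. Using the fact that $h\mapsto \rr^n\se\u(h)$ is (upper semicontinuous and) Borel, and a projection argument, this graph is the complement of the projection of a Borel set, hence coanalytic; and the condition $\s(h)\le\ep$ is closed in $h$ (lower semicontinuity of $\s$), so $\{(t,h): h\in\mathcal C_t\}$ is coanalytic. (2) \emph{Nonemptiness of fibres on a full-measure set.} We have just argued $\mathcal C_t\neq\emptyset$ for every $t\in A$. (3) \emph{Measurable selection.} Apply a measurable (uniformization) selection theorem — Jankov--von Neumann for analytic sets, or the Luzin--type uniformization up to a null set — to obtain a Borel set $F\su A$ with $\hal(F)=\hal(A)$ (note $\hal|_A$ is $\sigma$-finite by Remark~\ref{ahlregm} and \eqref{tdef}, which is what lets us pass from ``$\hal|_A$-measurable'' to ``Borel on a full-measure Borel subset'') and a Borel function $\phi:F\to H$ with $\phi(t)\in\mathcal C_t$ for all $t\in F$. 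Since $H$ is compact metrizable, standard selection machinery applies.

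The main obstacle I expect is Step (1): getting the graph $\{(t,h):B_t\su\u(h)\}$ into the projective/$\sigma(\mathbf{\Sigma}^1_1)$ hierarchy cleanly, because ``$B_t\su\u(h)$'' is a statement with a universal quantifier over $x\in\rr^n$ (or, equivalently, an emptiness-of-section statement), which produces a coanalytic rather than Borel set; then one needs a selection theorem that tolerates coanalytic (or at least analytic) graphs, and one needs to convert the resulting universally-measurable selection into an honestly Borel one on a full-$\hal$-measure Borel set. This is precisely the ``highly non-trivial technical'' point flagged in the text and the reason the detailed argument is deferred to Section~\ref{pfmeas} and the Appendix; the remaining pieces (nonemptiness of fibres from $\dim B_t<\be$, $\sigma$-finiteness of $\hal|_A$, compactness of $H$, lower semicontinuity of $\s$) are routine.
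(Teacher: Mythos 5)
Your setup agrees with the paper's: you define the same relation $\mathcal C_t=\{h\in H: B_t\su\u(h),\ \s(h)\le\ep\}$, correctly observe that its graph is coanalytic (complement of the projection of a Borel set, intersected with $T\times S$ for the closed set $S=\{h:\s(h)\le\ep\}$), and correctly observe that $\mathcal C_t\neq\emptyset$ for every $t\in A$. This is exactly how the paper begins.

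However, Step (3) has a genuine gap. You propose to apply ``Jankov--von Neumann for analytic sets, or the Luzin-type uniformization up to a null set.'' Jankov--von Neumann gives a universally measurable uniformization of an \emph{analytic} set, and $\{(t,h):h\in\mathcal C_t\}$ is \emph{coanalytic}, not analytic. There is no ZFC theorem asserting that a coanalytic set with full projection admits a universally measurable (or even $\hal$-measurable) uniformization. What ZFC does give is the Novikov--Kondo theorem: the coanalytic set $C$ has a uniformization $\psi$ whose \emph{graph} is coanalytic. But then $\psi^{-1}(U)=\proj_T\bigl((T\times U)\cap\graph(\psi)\bigr)$ is a projection of a coanalytic set, i.e.\ a $\mathbf\Sigma^1_2$ set, and $\mathbf\Sigma^1_2$ sets need not be Lebesgue (or $\hal$) measurable in ZFC --- for instance, they can fail to be measurable under $V=L$. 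So at this point one cannot simply invoke Luzin's theorem to convert $\psi$ into a Borel $\phi$ on a full-measure set: the measurability of $\psi$ is itself in question.

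The paper's resolution is a two-step metamathematical argument that has no ZFC-internal analogue of the kind you sketch. First it shows that the desired conclusion follows from the hypothesis ``every $\mathbf\Sigma^1_2$ subset of $[0,1]$ is Lebesgue measurable,'' which is consistent with ZFC but not a theorem of ZFC; under this hypothesis $\psi$ is $\hal$-measurable and Luzin's theorem gives $F$ and $\phi$. Second, it formalizes the statement of the lemma as (equivalent to) a $\Pi^1_3$ formula, and uses Shoenfield absoluteness to conclude that a $\Pi^1_3$ statement holding in some forcing extension (where ${\rm add}(\mathcal N)>\aleph_1$ forces measurability of $\mathbf\Sigma^1_2$ sets) must already hold in the ground model, hence in ZFC. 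You flag the general difficulty at the end of your proposal, but the specific tools you name (Jankov--von Neumann, Luzin uniformization) are not adequate for the coanalytic case, and the absoluteness argument is the missing idea that closes the gap. (The paper also observes that when $B$ is compact one can restrict to finite $h$, the relation becomes Borel with countable sections, and Luzin--Novikov then gives a Borel selection directly; this easier route is available but only under the extra compactness assumption.)
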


Fix $\ep>0$, and fix $F$ as well as a Borel measurable $\phi: F \to H$ obtained in Lemma \ref{meas} for $\ep$. 
Since $\hal(A \setminus F)=0$ and since $B$ and $F$ are Borel sets, we have that 
$B \setminus (F \times \rr^n)$ is Borel $\gal$-null, 
so our $\gal$-null set $G$ (to be constructed below) can include $B \setminus (F \times \rr^n)$. 
Thus we can assume without loss of generality that $\phi$ is defined on $A$.

Fix $\rho \geq 1$ (we will take $\rho \approx \log(1/\ep)$), and introduce the following notation: 
\begin{equation}
\garo=\{f: T \to  \rr^n: f \ \text{is} \ \rho\text{-Lipschitz}\}.
\label{garo}
\end{equation}

The proof of Theorem \ref{thm2} is based on the following lemma. 
\begin{lemma}
\label{lem1}
There exists a Borel set $\ti{B} \su B$ ($\ti{B}=\ti{B}_{\ep,\rho}$) with the following properties:
\begin{enumerate}
\item 
for all $\ga \in [\ep,1)$, $\hau_{\infty}^{\al+\be+\ga}(\ti{B}) \lkb_{n,k,\al,\be} \rho^{\al+\be+\ga} \cdot \ep$, 

\item 
for all $f \in \garo$, $\hal(\{t \in \rr^k: (t,f(t)) \in B \se \ti{B} \}) \lkb_{n,k} \ep$. 

\end{enumerate}

\end{lemma}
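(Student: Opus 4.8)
The plan is to construct $\ti B$ as the union over scales of those dyadic cubes from the fixed covers $\{\phi(t)\}_{t\in A}$ that are ``seen by many points of $T$'', and to show that the remaining part of $B$ intersects every $\rho$-Lipschitz graph in a set of small $\hal$-measure.

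First I would set up the bookkeeping. For each $t\in A$ we have a countable cover $B_t\subset\u(\phi(t))$ with $\s(\phi(t))\le\ep$, all cubes in $\ti{\mathcal D}$ (so of side $\le 2^{-l_0}$). For a cube $D\in\ti{\mathcal D}$ define its ``shadow'' in $T$, namely $T_D=\{t\in T : D\in\phi(t)\}$; measurability of $t\mapsto\phi(t)$ makes $T_D$ a Borel set. The natural dichotomy is: a cube $D$ is \emph{heavy} if $\hal(T_D)\ge c_0 s_D^{\al}$ for a suitable threshold (here $c$ from \eqref{ahl} and the Ahlfors--David upper regularity of $T$ will both enter), and \emph{light} otherwise. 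Let $\ti B$ be the set of points $(t,x)\in B$ for which $x$ lies in some heavy cube $D$ with $t\in T_D$; equivalently $\ti B=B\cap\bigcup_{D\text{ heavy}}(T_D\times D)$, a Borel set, and a subset of $B$. Then $B\se\ti B$ consists of points $(t,x)$ whose covering cube $D\in\phi(t)$ is light.

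Next I would verify property (1), the Hausdorff content bound. The cubes $D\times(\text{something in }T)$ of side comparable to $s_D$ cover $\ti B$; more precisely, cover $T_D$ by $\lesssim_k \hal(T_D) s_D^{-\al}$ balls of radius $s_D$ (using Ahlfors--David regularity of $T$, which gives that $\hal|_T$ behaves like $r^\al$ on balls), and pair each with $D$ to get a cover of $\ti B$ by sets of diameter $\lesssim_{n,k} s_D$. Summing $(\diam)^{\al+\be+\ga}$ over these gives, up to constants, $\sum_{D\text{ heavy}} \hal(T_D) s_D^{-\al} s_D^{\al+\be+\ga} = \sum_{D\text{ heavy}} \hal(T_D) s_D^{\be+\ga}$. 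Now I would bound $\hal(T_D)\le\hal(T\cap B(t_D,s_D))\le c' s_D^\al\cdot(\text{const})$... no — rather, the key is to integrate: $\sum_{D\in\phi(t)} s_D^{\be+\ga} \le (2^{-l_0})^{\ga}\s(\phi(t)) \le 2^{-l_0\ga}\ep$ since $\diam D\le 2^{-l_0}$ and $\ga\ge 0$ (and one must absorb the $\sqrt n$ from diameter vs.\ side length into the implied constant). Then
\[
\sum_{D\text{ heavy}}\hal(T_D) s_D^{\be+\ga}
=\int_T \sum_{D\in\phi(t),\ D\text{ heavy}} s_D^{\be+\ga}\,d\hal(t)
\le \int_A 2^{-l_0\ga}\ep \, d\hal(t) = 2^{-l_0\ga}\ep\,\hal(A)\le\ep,
\]
using $\hal(A)\le 1$ from \eqref{tdef}. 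The power $\rho^{\al+\be+\ga}$ in the statement does not obviously arise from this counting — it must come from the diameter of the covering sets being allowed to grow by a factor $\rho$ when we pass to the $\rho$-Lipschitz setting, or simply be a crude overcount; I would carry the $\rho$ factor honestly through the diameter estimates (replacing $s_D$ by $\rho s_D$ wherever a Lipschitz-graph-sized set is used) so that the stated bound holds with room to spare.

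For property (2): fix $f\in\garo$ and let $L_f=\{t\in\rr^k : (t,f(t))\in B\se\ti B\}\subset A$. For $t\in L_f$, the point $f(t)$ lies in a \emph{light} cube $D_t\in\phi(t)$, so $\hal(T_{D_t})<c_0 s_{D_t}^\al$. The collection $\{D_t : t\in L_f\}$ is countable (a subfamily of $\ti{\mathcal D}$). I would estimate $\hal(L_f)$ by covering $L_f$ scale by scale: the set of $t\in L_f$ using a fixed light cube $D$ is contained in $T_D\cap f^{-1}(D)$, and since $f$ is $\rho$-Lipschitz, $f^{-1}(D)$ is ``not too large'' — but the essential point is that it is contained in $T_D$, so its $\hal$-mass is $<c_0 s_D^\al$, and simultaneously its \emph{diameter} is controlled because $\graph f$ restricted to it lies in a set of diameter $\lesssim\rho\, s_D$, hence $L_f\cap f^{-1}(D)$ has diameter $\lesssim\rho s_D$. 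This lets me produce, for each light $D$ actually used, a single ball of radius $\lesssim\rho s_D$ covering $L_f\cap f^{-1}(D)$; then $\hal_{\rho s_D\text{-ish}}$-summing gives $\sum (\rho s_D)^\al$. To make this summable I would group the used cubes by dyadic scale $s_D=2^{-l}$ and observe that at scale $l$ the cubes are disjoint translated dyadic cubes, so the corresponding shadow-sets $T_D$ are pairwise disjoint subsets of $T$; but each used one has $\hal(T_D)$ both positive (it contains some $t$) and $<c_0 2^{-l\al}$, which caps the \emph{number} of scale-$l$ cubes in play only after comparing with $\hal(T)$ — this is where I'd use $c_0$ small relative to $c$ and summability $\sum_{l\ge l_0}2^{-l\ep}\le c\ep^2$ from \eqref{lcond}. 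Carefully, the bound should come out as $\hal(L_f)\lesssim_{n,k}\ep$ after summing a geometric-type series in $l\ge l_0$, the $l_0$-truncation \eqref{lcond} being exactly what kills the tail.

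The main obstacle I anticipate is property (2): turning ``every point of $L_f$ sits in a light cube'' into a genuine upper bound on $\hal(L_f)$. The difficulty is that a priori infinitely many cubes at each scale could be used, and the lightness condition only bounds $\hal(T_D)$ from above, not the number of relevant cubes. The resolution has to exploit that the cubes at a fixed scale are disjoint (so their $T$-shadows partition a subset of $T$), that $\hal(T)\le 1$, and that the Lipschitz bound $\rho$ forces $f^{-1}(D)$ to be contained in a ball of radius $\approx\rho s_D$ — so the scale-$l$ contribution is at most $\rho^\al$ times (number of scale-$l$ light cubes hitting $\graph f$) times $2^{-l\al}$, and the number is bounded because each such cube carries $\hal(T_D)\le\hal(T\cap(\text{a ball of radius}\approx 2^{-l}))$ and these are essentially disjoint over $t$... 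I would need the threshold $c_0$ and the choice $\rho\approx\log(1/\ep)$ together with \eqref{lcond} to make the sum over $l\ge l_0$ telescope to $O_{n,k}(\ep)$. Getting the dependence of constants right (only on $n,k$ in part (2), on $n,k,\al,\be$ in part (1)) will require care but no new ideas beyond Ahlfors--David regularity and the dyadic pigeonholing.
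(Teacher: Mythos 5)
Your construction is a genuinely different route from the paper's, and it has a gap in each of the two claims that I don't think can be closed without essentially changing your dichotomy from a \emph{global} one to a \emph{local} one.

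For property (1): you cover $T_D\times D$ (for heavy $D$) by $\lesssim_k \hal(T_D)\,s_D^{-\al}$ sets of diameter $\approx s_D$. This covering-number estimate is false for a general subset $T_D\subset T$. Lower Ahlfors--David regularity of $T$ controls the $s_D$-covering number of $T_D$ by $\hal\bigl(T\cap (T_D)_{s_D}\bigr)/s_D^\al$ (where $(T_D)_{s_D}$ is the $s_D$-neighbourhood), \emph{not} by $\hal(T_D)/s_D^\al$: a maximal $s_D$-separated family of centres in $T_D$ is bounded by pushing Ahlfors mass into the disjoint balls $B(t_i,s_D/2)$, but $T_D$ itself may have arbitrarily small density in each such ball. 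Your ``heavy'' condition $\hal(T_D)\ge c_0 s_D^\al$ gives a \emph{lower} bound on this covering number, not an upper one. Once the count is replaced by the correct one, the $\ep$ factor is lost. For property (2), you flag the problem yourself: at a fixed scale and type, lightness caps $\hal(T_D)$ per cube, but the total over all light $D$ used by the graph is only bounded by $\hal(A)\le 1$, not by $O(\ep)$; the additional ingredients you list (Lipschitz diameter of $f^{-1}(D)$, disjointness, \eqref{lcond}) are necessary but you do not say how they combine, and with your definition of ``light'' I don't see how they do.

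The paper resolves both issues with a \emph{local} dichotomy. For each covering cube $Q=\{t\}\times D$ it forms a cylinder $C_Q=\ol{B}(t,s_Q)\times(x_Q+3\rho s_Q[-1,1]^n)$, and calls $C$ \emph{good} when
$r_C^{\al+\be+\ep}\le\int_{A\cap\ol{B}(y_C,r_C)}\sum_{Q\in\iq^t,\,Q\su C}s_Q^{\be}\,d\hal(t)$
--- an integral of the section weights over a ball of radius $r_C$, rather than the global measure of a shadow set. Locality is what makes both halves work. For (1), the good cylinders are genuine balls in the stretched metric $\max\{d_k,(3\rho)^{-1}m_n\}$, so the $5r$-covering theorem produces a \emph{disjoint} subfamily whose $5$-dilates cover $\ti B$; each such cylinder contributes a single covering set of diameter $\lesssim\rho r_C$, and summing $r_C^{\al+\be+\ga}\le r_C^{\al+\be+\ep}$ over the disjoint family and then using the goodness inequality plus Fubini and $\hal(A)\le1$ gives exactly $\lesssim\rho^{\al+\be+\ga}\ep$ --- no covering-number estimate for the shadows is ever needed. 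For (2), the paper splits the points of $A_{u,l}$ by whether the \emph{local density} of $A_{u,l}$ in the $2^{-l}$-ball around them is $\le\ep a_{u,l}$; the low-density set has $\hal$-measure $\lesssim_k\ep a_{u,l}$ by the Besicovitch covering theorem, and a geometric lemma (Lemma~\ref{local}) shows the high-density points at ``heavy'' scales $l\in\ij$ give rise to good cylinders, so they lie in $\ti B$; the leftover scales $l\in\ij'$ are controlled by the cutoff~\eqref{lcond}. Your version of the dichotomy cannot be plugged into this machinery, because Besicovitch gives density information only at a single scale and point, not about the global $\hal(T_D)$.

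If you want to repair your write-up along your own lines, the minimal change is to replace the condition $\hal(T_D)\ge c_0 s_D^\al$ by a local density condition comparable to the paper's~\eqref{goodci}; otherwise the covering argument for (1) and the Besicovitch step needed for (2) simply have nothing to bite on.
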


We postpone the proof of Lemma \ref{lem1} to Section \ref{pflem1}. 

Now we show how Lemma \ref{lem1} implies Theorem \ref{thm2}. 

\begin{proof}[Proof of Theorem~\ref{thm2}] 
For $m=1,2,\dots$ let $\ti{B}_m=\ti{B}_{2^{-m},m}$ constructed in Lemma \ref{lem1}. Then $\ti{B}_m$ has the following properties:
\begin{equation}
\label{l1}
\text{for all} \ \ga \in [2^{-m},1), \ \hau_{\infty}^{\al+\be+\ga}(\ti{B}_m) \lkb_{n,k,\al,\be} m^{\al+\be+\ga} \cdot 2^{-m}, 
\end{equation}

\begin{equation}
\label{l2}
\text{for all} \ f \in \mathcal{G}_m, \ \hal(\{t \in \rr^k: (t,f(t)) \in B \se \ti{B}_m \}) \lkb_{n,k}  2^{-m}. 
\end{equation}

Let 
$$B'=\limsup \ti{B}_m=\bigcap_{l=1}^{\infty} \bigcup_{m=l}^{\infty} \ti{B}_m, \ \text{and} \ G=B \se B'.$$ 
Then clearly, $G$ is Borel. 
We claim that $G$ is $\gal$-null (see the definition in \eqref{gamma}), and $\dim (B \setminus G) \leq \al + \be$. 

First we show that $\dim B'=\dim (B \setminus G) \leq \al + \be$. 

Let $0 < \ga \leq 1$ be arbitrary. Let $l_1$ be the first integer such that $2^{-l_1} \leq \ga$. Then 
 we have by \eqref{l1} that 
\begin{equation}
\label{eqf0}
\hau_{\infty}^{\al+\be+\ga}(B') \leq \hau_{\infty}^{\al+\be+\ga}(\bigcap_{l=l_1}^{\infty} \bigcup_{m=l}^{\infty} \ti{B}_m) \leq 
\sum_{m=l}^{\infty} \hau_{\infty}^{\al+\be+\ga}(\ti{B}_m) \lkb \sum_{m=l}^{\infty} m^{\al + \be + \ga} \cdot 2^{-m}
\end{equation}
for each $l \geq l_1$. 
Since $\sum m^{\al + \be+\ga} 2^{-m}$ is convergent, the right hand side tends to zero as $l \to \infty$, so we obtain that 
$\hau_{\infty}^{\al+\be+\ga}(B')=0$ for each $0 < \ga \leq 1$. Clearly, this implies that 
$\dim B' \leq \al + \be$, which completes the proof of our first claim. 

Now we prove that for every  Lipschitz function $f: T \to \rr^n$, 
$\hal(\{t \in \rr^k: (t,f(t)) \in G \}=0$.

Let $f$ be $\kappa$-Lipschitz for some $\kappa$. 
We need to prove that 
\begin{equation}
\hal(\{t \in \rr^k: (t,f(t)) \in \bigcup_{l=1}^{\infty} \bigcap_{m=l}^{\infty} (B \se \ti{B}_m) \})=0.
\label{eqf2}
\end{equation}

Clearly, it is enough to prove that for each $l=1,2,\dots,$ 
\begin{equation}
\hal(\{t \in \rr^k: (t,f(t)) \in \bigcap_{m=l}^{\infty} (B \se \ti{B}_m) \})=0. 
\label{eqf3}
\end{equation}

Let $l_2$ be the first integer such that $\kappa \leq l_2$. By \eqref{l2}, we have that 
$\hal(\{t \in \rr^k: (t,f(t)) \in B \se \ti{B}_m \}) \lkb 2^{-m}$ for all $m \geq l_2$. Taking $m\to\infty$ we obtain \eqref{eqf3}, which completes the proof. 
\end{proof}

Now we combine Theorem \ref{classic} with Theorem \ref{thm2} to prove Theorem \ref{thm1}.

\begin{proof}[Proof of Theorem~\ref{thm1}]
Let $B \su  \rr^k \times \rr^n$ be a Borel set such that $\dim \proj_{\rr^k} B = \al>0$ and $\proj_{\rr^k} B \su T$, 
where $T \su \rr^k$ is an Ahlfors--David $\al$-regular set. 
Let $A=\proj_{\rr^k} B$, and let $\ga=\essup(\dim B_t)$.
By Remark \ref{true}, $\dim B \geq \al + \ga$. Using Howroyd's theorem (see \cite{Ho}), we can choose a Borel subset 
\begin{equation}
B_1 \su B \ \text{with} \ \dim B_1=\al + \ga.
\label{eqhowy}
\end{equation} 

If $\hal(A)=0$, then $B$ is $\gal$-null, therefore $B \setminus B_1$ is $\gal$-null as well. Choosing $G=B \setminus B_1$, the statement concludes. 

Now we assume that $\hal(A)>0$. 
Let $\ep>0$. 
By 
Definition~\ref{d:ess-sup},
$\hal (\{t \in A: \dim B_t > \ga + \ep \})=0$. 
We let $$A'=A'_{\ep}=\{t \in A: \dim B_t \leq \ga + \ep \}.$$ 
We have $\hal(A \se A')=0$. Moreover, $A'$ is $\hal$-measurable, 
since it is in the $\sigma$-algebra generated by analytic sets (see \cite{De} or \cite[Remark 6.2.]{MM}). 
This, combined with the fact that $T$ is a Borel set with $\sigma$-finite $\hal$-measure and $A' \su T$, 
easily implies that we can find a Borel set $A'' \su A'$ with 
$\hau^{\al} (A' \se A'')=0$. Then
$\hal (A \se A'')=0$, and $G'_{\ep}=B \setminus (A'' \times \rr^n)$ is a Borel $\gal$-null set. 

On the other hand, $A''$ was constructed so that we can apply Theorem \ref{thm2} for 
$B''=(A'' \times \rr^n) \cap B$ and $\be=\ga + \ep$. 
We conclude that there exists $G_{\ep}'' \su B''$ such that 
$G_{\ep}''$ is $\Ga_{\al}$-null, and 
$\dim (B'' \se G_{\ep}'') \leq \al + \ga + \ep$. Let
$G_1=\bigcup_{\ep \in \mathbb{Q}^+} G''_{\ep} \cup G'_{\ep} $. Clearly, $\dim (B \se G_1) \leq \al + \ga$  
and  by Remark \ref{gacup}, $G_1$ is $\gal$-null. 
Finally, let 
$G=G_1 \se B_1$, where $B_1$ is from \eqref{eqhowy}. Then $G\su B$ is a Borel $\Ga_{\al}$-null set, and $\dim (B \setminus G) = \al + \ga$, 
which is precisely the conclusion of Theorem \ref{thm1}. 
\end{proof}

\section{The proof of Lemma \ref{meas}}
\label{pfmeas}
The following more general statement will be shown. Part of its proof involves powerful techniques from set theory, these will be postponed to the Appendix. In case $B$ is compact, the proof becomes much easier, see Remark \ref{meascompact} below. 
\begin{lemma}[Z. Vidny\'anszky]
\label{genmeas}
Let $T \su \rr^k$ Borel with $0< \hal(T) < \infty$, let $\ep>0$, let $B \su T \times \rr^n$ Borel, and denote $A=\proj_{\rr^k} B\su T$. 
Suppose that for all $t \in A$, $\hau^{\be}(B_t)=0$. 
Then 
there exists a Borel set $F \su A$ with $\hal(F)=\hal(A)$ and a Borel measurable function 
$\phi: F \to H$ such that for each $t \in F$, $B_t \su \u(\phi(t))$, and $\s(\phi(t)) \leq \ep$ (see \eqref{union} and \eqref{diam}). 
\end{lemma}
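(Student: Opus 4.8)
\textbf{Proof plan for Lemma~\ref{genmeas}.}

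The plan is to realize the desired covering-choosing function as a measurable selection from a suitable multivalued map, using the Jankov--von Neumann / Kuratowski--Ryll-Nardzewski machinery adapted to the analytic-measurable hull of $A$. First I would fix $\ep>0$ and recall that $H=\mathcal{P}(\ti{\mathcal{D}})$ is a compact metrizable space (a Cantor set, since $\ti{\mathcal{D}}$ is countable), and that the maps $h\mapsto \u(h)$ and $h\mapsto \s(h)$ are Borel; in fact $\{(h,x): x\in\u(h)\}$ is open in $H\times\rr^n$ and $\s$ is lower semicontinuous. The target set is
\begin{equation}
R=\{(t,h)\in A\times H: B_t\su\u(h),\ \s(h)\le\ep\}.
\label{eq:planR}
\end{equation}
By hypothesis, for every $t\in A$ we have $\hau^\be(B_t)=0$, and since every element of $\ti{\mathcal{D}}$ has diameter $\le 2^{-l_0}$, a standard argument shows that $B_t$ admits a cover by cubes in $\ti{\mathcal{D}}$ with $\sum\diam^\be\le\ep$; hence the section $R_t$ is nonempty for every $t\in A$.

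Next I would establish that $R$ lies in the $\sigma$-algebra generated by analytic sets. The condition $\s(h)\le\ep$ is Borel in $h$, so the issue is $\{(t,h): B_t\su\u(h)\}$, equivalently $\{(t,h): (\{t\}\times\rr^n)\cap B\su\{t\}\times\u(h)\}$, i.e. the complement of $\proj_{T\times H}\big(\{(t,x,h): (t,x)\in B,\ x\notin\u(h)\}\big)$. Since $B$ is Borel and $\{(x,h): x\notin\u(h)\}$ is closed (complement of an open set), the projected set is analytic, so $\{(t,h): B_t\su\u(h)\}$ is coanalytic, and $R$ is the intersection of a coanalytic set with a Borel set. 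Now the key point: the $\hal|_T$-measurable sets form a $\sigma$-algebra containing the analytic sets (as cited, \cite{De} or \cite[Remark 6.2]{MM}), and $\hal|_T$ is $\sigma$-finite; so one can pass to a Borel set $F\su A$ with $\hal(A\sm F)=0$ over which everything becomes Borel. Concretely, I would take a Borel hull / kernel: since $R$ is in the $\sigma(\text{analytic})$-algebra, by the Jankov--von Neumann uniformization theorem $R$ admits a $\sigma(\text{analytic})$-measurable uniformizing function $\psi:A\to H$; then, because $\psi$ is measurable with respect to the $\hal|_T$-completion of the Borel $\sigma$-algebra and $H$ is a Polish space, Lusin's theorem (or the standard fact that a universally measurable map into a Polish space agrees with a Borel map off a null set) yields a Borel set $F\su A$ with $\hal(F)=\hal(A)$ and a Borel measurable $\phi:F\to H$ with $\phi=\psi$ on $F$. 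By construction $(t,\phi(t))\in R$ for $t\in F$, which is exactly $B_t\su\u(\phi(t))$ and $\s(\phi(t))\le\ep$.

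The main obstacle is the descriptive-set-theoretic bookkeeping: verifying that $R$ is genuinely in the $\sigma$-algebra generated by analytic sets (so that Jankov--von Neumann applies and the selector is universally, hence $\hal|_T$-, measurable), and then cleanly replacing the universally measurable selector by a Borel one off an $\hal$-null set while keeping the selection property intact. This is where the ``powerful techniques from set theory'' enter, and it is reasonable to defer the details to the Appendix. I would also note, as the paper does, that when $B$ is compact the argument simplifies dramatically: then $\{(t,h): B_t\su\u(h)\}$ is already Borel (even $F_\sigma$-ish by compactness of sections, using that $B_t$ is compact and $\u(h)$ is open so the containment is witnessed by a finite subcover, which is an open condition), $R$ is Borel with nonempty compact-section-friendly structure, and the Kuratowski--Ryll-Nardzewski selection theorem directly produces a Borel $\phi:A\to H$ — no completion argument needed.
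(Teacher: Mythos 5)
Your setup is the right one — defining $R=\{(t,h)\in A\times H: B_t\su\u(h),\ \s(h)\le\ep\}$, checking the sections are nonempty, identifying the set as coanalytic, uniformizing, and finishing with Lusin — and this matches the skeleton of the paper's Proposition~\ref{*prop}. But there is a genuine gap at the uniformization step. You correctly compute that $\{(t,h):B_t\su\u(h)\}$ is \emph{coanalytic} (the complement of a projection of a Borel set), and then appeal to the Jankov--von Neumann theorem to get a $\sigma(\mathbf{\Sigma}^1_1)$-measurable (hence universally measurable) selector. That theorem, however, applies only to \emph{analytic} $R$; it does not apply to coanalytic sets. For coanalytic $R$ the available tool is the Novikov--Kondo theorem, which yields a uniformizing function whose graph is $\mathbf{\Pi}^1_1$. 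Preimages of open sets under that function are projections of coanalytic sets, i.e.\ $\mathbf{\Sigma}^1_2$, and $\mathbf{\Sigma}^1_2$ sets are \emph{not} universally (or $\hal$-) measurable in ZFC — under $V=L$ there are even $\mathbf{\Delta}^1_2$ nonmeasurable sets. So the measurability you need for the Lusin step cannot be obtained for free.

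This is precisely where the paper's ``powerful techniques from set theory'' are doing real work, not just bookkeeping. The paper first proves the lemma \emph{conditionally}, assuming that projections of coanalytic sets are Lebesgue measurable (Proposition~\ref{*prop}), and then removes this assumption by observing that the statement is equivalent to a $\Pi^1_3$ formula and invoking Shoenfield absoluteness together with the consistency of $\mathrm{add}(\mathcal N)>\aleph_1$ (which forces $\mathbf{\Sigma}^1_2$-measurability). Your plan, by misattributing the uniformization theorem, presents the measurability of the selector as routine when in fact it is the crux; if you replace Jankov--von Neumann by Novikov--Kondo and then explicitly flag that the measurability of the resulting coanalytic uniformization is the nontrivial step requiring either an extra axiom or an absoluteness argument, the plan aligns with the paper's. (Your remark on the compact case is essentially right, though the paper uses Lusin--Novikov rather than Kuratowski--Ryll-Nardzewski, exploiting that the relevant sections are countable once one restricts to finite $h$.)
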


First we show that the statement of Lemma \ref{genmeas} is consistent with the standard ZFC axioms of set theory (so it cannot be disproved in ZFC). This will be done by showing that \eqref{*assume} below implies Lemma \ref{genmeas}, and using the fact that \eqref{*assume} is consistent with ZFC, 
see \cite[Corollary 9.3.2]{BJ}. 

\begin{equation}
\label{*assume}
\text{ If } F \su [0,1] \text{ is the projection of a co-analytic set then  $F$ is Lebesgue measurable.}
\end{equation}


\begin{prop}
\label{*prop}
Lemma \ref{genmeas} follows from \eqref{*assume}. Consequently,  Lemma \ref{genmeas} is consistent with ZFC. 
\end{prop}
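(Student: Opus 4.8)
The statement to prove is Proposition~\ref{*prop}: that the set-theoretic hypothesis \eqref{*assume} implies Lemma~\ref{genmeas}, and hence that Lemma~\ref{genmeas} is consistent with ZFC. The second half is immediate: by \cite[Corollary 9.3.2]{BJ}, \eqref{*assume} is consistent with ZFC, so anything it implies is consistent with ZFC. Thus the real content is the implication ``\eqref{*assume} $\Rightarrow$ Lemma~\ref{genmeas}''. The plan is to produce, for each $t\in A$, a suitable covering $h\in H=\mathcal{P}(\ti{\mathcal{D}})$ of $B_t$ with $\s(h)\le\ep$ by a \emph{uniformizing} (selection) argument, and then to extract from it a Borel-measurable selector after discarding a null set; the role of \eqref{*assume} is precisely to guarantee that the sets arising along the way are measurable.

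\textbf{Key steps.}
First I would observe that, since $\hau^\be(B_t)=0$ for every $t\in A$ (and all cubes in $\mathcal{D}$ have diameter comparable to dyadic scales, with $\net^\be\sim\hau^\be$), for each $t\in A$ the set of $h\in H$ with $B_t\subset\u(h)$ and $\s(h)\le\ep$ is nonempty; moreover one can take this $h$ to be countable and to consist of cubes from the truncated family $\ti{\mathcal{D}}=\ti{\mathcal{D}}_\ep$, using the fact that the omitted large-scale tail of the series $\sum 2^{-l\ep}$ is controlled by $c\ep^2$ via \eqref{lcond} (this is why $\ti{\mathcal{D}}$ rather than $\mathcal{D}$ is used). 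Second, I would consider the relation
$$
R=\{(t,h)\in A\times H : B_t\subset\u(h),\ \s(h)\le\ep\},
$$
and analyze its descriptive complexity. Since $B$ is Borel and $\u(h)$, $\s(h)$ depend on $h$ in a Borel (indeed continuous-ish, countable-union) way, the condition $\s(h)\le\ep$ is closed in $H$, while ``$B_t\subset\u(h)$'', i.e. $\forall x\, ((t,x)\in B\Rightarrow x\in\u(h))$, is co-analytic (a $\forall$ over the Borel set $B$). So $R$ is the intersection of a Borel set with a co-analytic set, hence co-analytic. Third, $H$ is a compact metrizable space, so by the Jankov--von Neumann / Kondô-type uniformization available for co-analytic sets (or even a simpler argument exploiting compactness of $H$ and closedness of the $\s\le\ep$ slices), $R$ admits a uniformizing function $\phi_0:A\to H$ whose graph is in a pointclass no worse than co-analytic; in particular $\phi_0$ is measurable with respect to the $\sigma$-algebra generated by analytic sets. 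Fourth — and this is where \eqref{*assume} enters — I would argue that the preimages under $\phi_0$ of basic open (clopen) sets in $H$ are, up to the step above, projections of co-analytic subsets of a product, hence Lebesgue measurable by \eqref{*assume}; by inner regularity of $\hal|_T$ (which is $\sigma$-finite, Remark~\ref{ahlregm}) one then finds a Borel set $F\subset A$ with $\hal(F)=\hal(A)$ on which $\phi_0$ agrees with a genuinely Borel-measurable map $\phi:F\to H$. Restricting to $F$ gives exactly the conclusion of Lemma~\ref{genmeas}: $B_t\subset\u(\phi(t))$ and $\s(\phi(t))\le\ep$ for every $t\in F$.

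\textbf{Main obstacle.}
I expect the delicate point to be the measurability bookkeeping in the last two steps: one must pin down that the uniformizing selector $\phi_0$ can indeed be chosen so that its level sets are (Lebesgue, hence — after the usual Borel-modulo-null correction) of the right complexity, and that this is what \eqref{*assume} buys. The cleanest route is probably: (a) get $\phi_0$ $\sigma(\text{analytic})$-measurable by co-analytic uniformization; (b) note that each set $\phi_0^{-1}(N)$ for $N$ clopen in $H$, being in $\sigma(\text{analytic})$, is a projection of a co-analytic set and hence Lebesgue measurable under \eqref{*assume}; (c) use $\sigma$-finiteness and inner/outer regularity of $\hal|_T$ to replace $\phi_0$ by a Borel $\phi$ agreeing with it off an $\hal$-null set, enlarging the discarded null set finitely/countably often. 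The compactness of $H$ is what makes the selection step soft; everything else is a standard consistency-plus-uniformization package. (I would also flag — as the paper does in Remark~\ref{meascompact} — that when $B$ is compact, $R$ becomes closed, $\phi_0$ can be taken Borel outright, and \eqref{*assume} is not needed; that special case could be recorded first as a warm-up.)
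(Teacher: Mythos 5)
Your plan matches the paper's proof essentially step for step: form the relation on $T\times H$ (the paper calls it $C$) whose sections give admissible coverings, check it is co-analytic, apply the Novikov--Kond\^o uniformization to get a selector with co-analytic graph, invoke \eqref{*assume} to make the selector $\hal$-measurable, and then use Lusin's theorem plus finiteness of $\hal|_T$ to replace it by a genuinely Borel $\phi$ on a Borel set $F$ of full measure. The observation that the compact case is easier and avoids \eqref{*assume} is also the paper's Remark~\ref{meascompact}.

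There are, however, two technical slips worth correcting. First, you define $R\subset A\times H$ and conclude ``$R$ is the intersection of a Borel set with a co-analytic set, hence co-analytic''; but $A=\proj_{\R^k}B$ is in general only analytic, not Borel, so the intersection with $A\times H$ can destroy co-analyticity. The paper avoids this by defining the relation over all of $T$ (a Borel set): for $t\in T\setminus A$ we have $B_t=\emptyset$, so the covering condition holds vacuously and $\proj_T C=T$; one then restricts to $A$ only at the very end, after the uniformization. Second, you assert that the co-analytic uniformization yields a $\phi_0$ that is already measurable with respect to the $\sigma$-algebra generated by analytic sets. That is not what Kond\^o gives: since $\graph(\phi_0)$ is co-analytic, a preimage $\phi_0^{-1}(U)=\proj\bigl((T\times U)\cap\graph(\phi_0)\bigr)$ is a projection of a co-analytic set, i.e.\ $\mathbf{\Sigma}^1_2$, which is strictly larger than $\sigma(\text{analytic})$ and is not universally measurable in ZFC. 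This is exactly where \eqref{*assume} is needed, so the claim that $\phi_0$ is $\sigma(\text{analytic})$-measurable short-circuits the role of the hypothesis. (Also, the Jankov--von~Neumann uniformization is for analytic, not co-analytic, relations; the relevant theorem here is Kond\^o's.) Both slips are repairable without changing the overall structure of the argument.
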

Unfortunately, \eqref{*assume} cannot be proved in ZFC alone, see \cite[p. 307]{Kec}.  
Nevertheless, using the Shoenfield absoluteness theorem (\cite[Theorem 25.20]{Je}) and Proposition \ref{*prop} we are also able to prove Lemma \ref{genmeas} in ZFC. Since this requires very involved techniques from set theory and logic, the proof will be postponed to the Appendix. 

\begin{proof}
Assume that \eqref{*assume} holds. 
Using the isomorphism theorem for measures (see e.g. \cite[Theorem 17.41.]{Kec}) and the simple fact that if a set is a projection 
of a co-analytic set then any Borel isomorphic image of it is also a projection of a co-analytic set,  \eqref{*assume} implies the following. 
\begin{align}
\label{*Hausdorff_assume}
 & \text{For any} \ T \su \rr^k \ \text{Borel with} \ 0< \hal(T)  < \infty, \\
 & \text{if $F \su T$ is the projection of a co-analytic set then  $F$ is }  \hal \text{-measurable.}  
\nonumber
\end{align}

Let 
$$C=\{(t,h) \in T \times H \ : \ B_t \su \u(h), \ \s(h) \leq \ep\}.$$ 
Since $\hau^{\be}(B_t)=0$ for $t \in A$ and $B_t = \emptyset$ for $t \in T \setminus A$, we have that $\proj_T C = T$. 
Our goal is to find an $\hal$-measurable uniformization of $C$, that is, an $\hal$-measurable function 
$\psi: T \to H$ such that $\graph(\psi) \su C$: if $\psi$ is such a function, then by definition 
for each $t \in T$ we have $B_t \su \u(\psi(t))$, and $\s(\psi(t)) \leq \ep$.

We claim that $C$ is a co-analytic set. 
Let 
$$
L=\{((t,y),h) \in B  \times H: y \notin \u(h)\}
\quad \text{and} \quad
S=\{h \in H: \s(h) \leq \ep\}.
$$ 
Then, by definition, 
$$
C=
\left((T \times H) \se \proj_{T \times H} L\right) 
\cap (T\times S).$$
It is easy to check using that the cubes of the union $\u(h)$ are open that $(B \times H) \se L$ is open in $B \times H$. Since $B$ is Borel, this implies that $\proj_{T \times H} L$ is analytic. 
It is also easy  to check that $S$ is closed, so $T \times S$ is Borel, hence the above formula gives that $C$ is indeed 
co-analytic.

By the Novikov-Kondo uniformization theorem (see \cite[Theorem 36.14]{Kec}), $C$ has a co-analytic uniformization,  
that is, there exists $\psi: T \to H$ such that $\graph(\psi) \su C$, and $\graph(\psi)$ is co-analytic. 
We need that $\psi$ is $\hal$-measurable. Since for any open $U \su H$, $\psi^{-1}(U)=\proj_T \left((T \times U) \cap \graph(\psi)\right)$, 
and $(T \times U) \cap \graph(\psi)$ is co-analytic, $\psi^{-1}(U)$ is $\hal$-measurable by \eqref{*Hausdorff_assume}. 

Recall that $A=\proj_{\rr^k} B \su T$. Since $B$ is Borel, $A$ is analytic, and since analytic sets are measurable, we have that 
$\psi \restriction A$ is $\hal$-measurable. 
Using Lusin's theorem (see e.g. \cite[Corollary 2.3.6]{Fe}), the fact that $\hal(A)<\infty$, and the regularity of the $\hal$-measure, we get that there exists $F \su A$ Borel with 
$\hal(F)=\hal(A)$ and a Borel-measurable function $\phi: F \to H$ such that $\phi(t)=\psi(t)$ for every $t \in F$, 
concluding the proof of Proposition \ref{*prop}.

\end{proof}

\begin{remark}
\label{meascompact}
If  $B$ is compact, then the proof of Lemma~\ref{meas} is much simpler. Note that each covering of a section of $B$ by open (dyadic) cubes has a subcover with finitely many cubes, so we can restrict our attention to $h\in H$ with $|h|<\infty$.
Let 
$$C'=\{(t,h) \in A \times H: B_t \su \u(h), \ \s(h) \leq \ep, \ |h| < \infty\}.$$ 

Let 
$$L=\{((t,y),h) \in B  \times H: y \notin \u(h)\} \ \text{and} \  S=\{h \in H: \s(h) \leq \ep\}$$ 
as above. It is easy to check using that $B$ is compact that $L$ is compact. 
Also, $S$ is closed, so $A \times S$ is Borel. 
Let $G=\{h \in H:  |h| < \infty\}$.
Clearly, $G$ is countable, so $A \times G$ is Borel. By definition, 

\begin{align}
C'=\left( (A \times H) \se \proj_{T \times H} L \right) \cap (A \times S) \cap (A \times G). 
\end{align}

Clearly, $C'$ is Borel, moreover, for all $t \in A$, the $t$-section $\{h \in H: (t,h) \in C'\}$ is nonempty and countable. 
We can apply the Lusin-Novikov uniformization theorem (see \cite[Theorem 18.10]{Kec}),
 which implies that there is a Borel measurable function $\phi: A \to H$ such that $\gr \phi \su C'$. This means that 
there exists a Borel measurable function $\phi: A \to H$ such that for each $t \in A$, $B_t \su \u(\phi(t))$, and $\s(\phi(t)) \leq \ep$, 
which completes the proof of Lemma \ref{meas} in case $B$ is compact.  
\end{remark}

\begin{remark}
For an interesting related result, see \cite{FaMau}, where the authors also use descriptive set theory to derive 
Fubini-type results (of a different perspective) for general measures. 
\end{remark}

\section{The proof of Lemma \ref{lem1}}
\label{pflem1}

We introduce some more notation. 

For $t \in A$, let $\phi(t)=h_t \in H$.  
By Lemma \ref{meas}, we have that for all $t \in A$, 
\begin{equation}
B_t \su \bigcup_{D \in h_t} D, \  \sum_{D \in h_t} \diam(D)^{\be} \leq \ep,
\label{m1}
\end{equation}
and $\phi: t \mapsto h_t$ is Borel measurable. 

Fix $t \in A$. For $D \in h_t$, let
\begin{equation}
\label{ball}
Q_D=\{t\} \times D \ \text{and} \ \iq^t=\{Q_D: D \in h_t\},
\end{equation}
which is a countable collection of (relative) open $n$-dimensional cubes in $\rr^{k+n}$. 

For $Q=\{t\} \times D \in \iq^t$, let $s_Q=s_D$ and $x_Q=x_D$ (recall \eqref{sqxq}). 
For $t \in A \su \rr^k$ and $Q \in \iq^t$, let 
\begin{equation}
C_Q=\ol{B}(t,s_Q) \times (x_Q+ 3\rho \cdot s_Q \cdot [-1,1]^n) \su \rr^{k+n},
\label{cilin}
\end{equation}
which is a closed cylinder in $\rr^{k+n}$ containing $Q$ in its interior. 

Let 
$$\ir^t=\{C_Q: Q \in \iq^t\} \ \text{and} \ 
\ir=\{C_Q: Q \in \iq^t, t \in A\}.$$

Then by definition, it is easy to check that 
\begin{equation}
\label{lindel}
B \su \bigcup_{C \in \ir} \bel (C).
\end{equation}

Intuitively, our aim is to choose a suitable subset $\ti{B}$ of $B$, for which 
$$\sum_{C \in \ir, C \cap \ti{B} \neq \emptyset} \diam(C)^{\al + \be}$$ is well-comparable with 
$$\int_{t \in A} \sum_{Q \in \iq^t} \diam(Q)^{\be} \ d\hau^{\al}(t).$$ 
Definition \ref{defci} below will lead to the definition of such a subset $\ti{B}$. 

For $C=C_Q \in \ir$, let $r_C=s_Q \in \rr$, $x_C=x_Q \in \rr^n$, and $y_C=t \in A$. 

\begin{defin}
\label{defci}
We say that $C \in \ir$ is \emph{good}, if 
\begin{equation}
r_C^{\al + \be+\eps} \le \int\limits_{A \cap \ol{B}(y_C,r_C)} \sum_{Q \in \iq^t, Q \su C, } {s_Q}^{\be} \ d\hau^{\al}(t).
\label{goodci}
\end{equation}
\end{defin}

\begin{remark}
\label{remeas}
It is easy to check that Lemma \ref{meas} implies that the function 
$t \mapsto \sum_{Q \in \iq^t, Q \su C, } {s_Q}^{\be}$ is 
$\hal$-measurable, 
and bounded by $\ep$ for all $t \in A$, thus $\hau^{\al}$-integrable on $A$. 
\end{remark}

Let 
\begin{equation}
\ic=\{C \in \ir: C \ \text{is \emph{good}} \},
\label{ic}
\end{equation}

and 
\begin{equation}
\ti{B}=B \cap \bigcup_{C \in \ic}\bel{C}.
\label{tib}
\end{equation}

We claim that the statements in Lemma \ref{lem1} are true for $\ti{B}$ defined above. 
Clearly, $\ti{B}$ is Borel. 

First we show that  
$$ \forall \ \ga \in [\ep,1),  \ \hau_{\infty}^{\al+\be+\ga}(\ti{B}) \lkb_{n,k,\al,\be} \rho^{\al+\be+\ga} \cdot \ep.$$ 
Fix $\ga \in [\ep,1)$. 

It is easy to see that each $C \in \ic$ is a closed ball of radius $r_C$ in the metric space 
$(\rr^{k+n},d)$, where $d=\max\{d_k, 1/(3\rho) \cdot m_n\}$, where $d_k$ denotes the Euclidean metric on $\rr^k$, and $m_n$ denotes the maximum metric on $\rr^n$. 
We can apply the ``$5r$''-covering theorem (see e.g. \cite[Theorem 2.1]{Ma}) for the collection $\ic$ of balls to obtain the following: 
There exists $\ic' \su \ic$ such that $\ic'$ is a disjoint family ($C \cap C' = \emptyset$ for all $C \neq C' \in \ic'$), and 
such that 
$$\ti{B} \su \bigcup_{C \in \ic'} 5C,$$
where $5C$ denotes the ball which is centered at the same center as $C$, and has radius $5$-times the radius of $C$. 

Then by the definition of $\hal$ (using now the standard Euclidean metric on $\rr^{k+n}$), it is easy to see by $\ga \leq 1$ that  
\begin{equation}
\hau^{\al+\be+\ga}_{\infty}(\ti{B}) \lkb_{n,k,\al,\be} \sum_{C\in \ic'} (5 \rho \cdot r_C)^{\al+\be+\ga} \lkb 
\rho^{\al+\be+\ga} \cdot \sum_{C\in \ic'} r_C^{\al+\be+\ep} \cdot r_C^{\ga-\ep}.
\label{eqe0}
\end{equation}

Since each $C \in \ic'$ is good,  by definition we have that 
$$\sum_{C\in \ic'} r_C^{\al+\be+\ep} \leq  \sum_{C\in \ic'} 
\int\limits_{A \cap \ol{B}(y_C,r_C)} \sum_{Q \in \iq^t, Q \su C, } {s_Q}^{\be} \ d\hau^{\al}(t).$$
By the trivial containment $A \cap \ol{B}(y_C,r_C) \su A$, using that $\ic'$ is a disjoint family, and 
\eqref{tdef} we obtain that this is at most 
$$\sum_{C\in \ic'} \int\limits_{A} \sum_{Q \in \iq^t, Q \su C, } {s_Q}^{\be} \ d\hau^{\al}(t) 
\leq \int\limits_{A} \sum_{Q \in \iq^t} {s_Q}^{\be} \ d\hau^{\al}(t) \leq 
\int\limits_{A} \ep d\hau^{\al}(t) \leq \ep.$$ 
Since $\ga \geq \ep$ and $r_C \leq 1$, we have $r_C^{\ga-\ep}\leq 1$ for each $C$. 
Continuing the estimate in \eqref{eqe0}, we obtain that 
$$\hau^{\al+\be+\ga}_{\infty}(\ti{B}) \lkb_{n,k,\al,\be} \rho^{\al+\be+\ga} \ep,$$ 
which proves our first claim in Lemma \ref{lem1}.

Now we prove that for all $f \in \garo$, $\hal(\{t \in \rr^k: (t,f(t)) \in B \se \ti{B} \}) \lkb_{n,k} \ep$, where $\garo$ is defined in \eqref{garo}. 

Recall the definition of $\iq^t$ from \eqref{ball}, $U$ from \eqref{equ}, and $l_0$ from \eqref{lcond}. 
For $t \in A$, $u \in U$, and  $l \geq l_0$, let 
\begin{equation}
\iq^t_{u,l}=\{Q \in \iq^t: s_Q=2^{-l}, \ 
x_Q=\left(\frac{m_1}{2^l}, \frac{m_2}{2^l}, \dots, \frac{m_n}{2^l} \right) \ \text{with} \ (m_1,\dots,m_n) \in u + \zz^n \}.
\label{iqtul}
\end{equation}

Note that by definition, for each $t \in A$, 
$$\{t\} \times B_t \su \bigcup_{u \in U} \bigcup_{l \geq l_0} \bigcup_{Q \in \iq^t_{u,l}} Q.$$

\begin{remark}
\label{disjoint}
We can assume without loss of generality, that for each $t \in A$ and $u \in U$, if $Q_1 \in \iq^t_{u,l_1}$ and $Q_2 \in \iq^t_{u,l_2}$ 
such that $l_1 \neq l_2$, then $Q_1 \cap Q_2 = \emptyset$. 
\end{remark}

Fix $f \in \garo$. Since for each fixed $u \in U$ and $l \geq l_0$, 
$\iq^t_{u,l}$ is a disjoint collection for any $t \in A$, there exists at most one $Q \in \iq^t_{u,l}$ such that $(t,f(t)) \in Q$. 

For $u \in U$ and $l \geq l_0$, let 
\begin{equation}
A_{u,l}=\{t \in A: (t,f(t)) \in \bigcup_{Q \in \iq^t_{u,l}} Q\}, 
\label{aul}
\end{equation}
and for any $t \in A_{u,l}$,
\begin{equation}
\text{let} \ Q_t \ \text{denote the unique cube in} \ \iq^t_{u,l} \ \text{such that} \ (t,f(t)) \in Q_t.
\label{qt}
\end{equation}

For any $t \in A_{u,l}$ write $C_t$ for the cylinder generated from $Q_t$ as in \eqref{cilin}. That is, 
\begin{equation}
C_t=\ol{B}(t,2^{-l}) \times (x_{Q_t}+ 3\rho \cdot 2^{-l} \cdot [-1,1]^n) \su \rr^{k+n}. 
\label{ct}
\end{equation}
We will need the following easy geometrical lemma. 

\begin{lemma}
\label{local}
For any $u \in U$ and $l \geq l_0$, if $y \in A_{u,l}$ and $t \in \ol{B}(y,2^{-l}) \cap A_{u,l}$, then $Q_t \su C_y$.
\end{lemma}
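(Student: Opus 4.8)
The plan is to prove the inclusion $Q_t \su C_y$ coordinate by coordinate, which reduces the whole lemma to a single triangle inequality. Write $Q_t = \{t\}\times D_t$ via \eqref{ball}, where (since $Q_t \in \iq^t_{u,l}$ by \eqref{qt} and \eqref{iqtul}) $D_t$ is an open cube of side $2^{-l}$ with lower-left corner $x_{Q_t}$, and recall from \eqref{ct} that $C_y = \ol{B}(y,2^{-l}) \times \bigl(x_{Q_y} + 3\rho\cdot 2^{-l}\cdot[-1,1]^n\bigr)$, whose second factor is the closed $\ell^\infty$-ball of radius $3\rho\cdot 2^{-l}$ about $x_{Q_y}$.

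For the $\rr^k$-coordinate there is nothing to do: the hypothesis is exactly that $t \in \ol{B}(y,2^{-l})$. For the $\rr^n$-coordinate I would show $D_t \su x_{Q_y} + 3\rho\cdot 2^{-l}\cdot[-1,1]^n$, i.e. $\|z - x_{Q_y}\|_\infty \le 3\rho\cdot 2^{-l}$ for every $z \in D_t$. Split $\|z - x_{Q_y}\|_\infty \le \|z - f(t)\|_\infty + \|f(t) - f(y)\|_\infty + \|f(y) - x_{Q_y}\|_\infty$. Since $(t,f(t)) \in Q_t$ we have $f(t) \in D_t$, so $z$ and $f(t)$ lie in the cube $D_t$ of $\ell^\infty$-diameter $2^{-l}$, giving $\|z - f(t)\|_\infty \le 2^{-l}$; similarly $(y,f(y)) \in Q_y$ gives $f(y)\in D_y$, and as $x_{Q_y}$ is a corner of $D_y$ we get $\|f(y) - x_{Q_y}\|_\infty \le 2^{-l}$; finally $f$ is $\rho$-Lipschitz and $|t-y| \le 2^{-l}$ (as $t \in \ol{B}(y,2^{-l})$), so $\|f(t) - f(y)\|_\infty \le |f(t)-f(y)| \le \rho\cdot 2^{-l}$. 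Adding these and using $\rho\ge 1$ (hence $2+\rho\le 3\rho$) yields $\|z - x_{Q_y}\|_\infty \le (2+\rho)\,2^{-l} \le 3\rho\cdot 2^{-l}$. Combining the two coordinate inclusions gives $Q_t = \{t\}\times D_t \su C_y$.

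I do not expect a genuine obstacle here, since the statement is purely a bookkeeping estimate; the points needing care are to work consistently with the $\ell^\infty$ (maximum) metric on the $\rr^n$-factor, matching the definitions \eqref{cilin}--\eqref{ct} of the cylinders and of the cubes, and to note that the Euclidean Lipschitz bound for $f$ also controls the $\ell^\infty$-distance between images, so that the constant $3\rho$ in \eqref{cilin} is precisely large enough to absorb the three error terms $2^{-l} + \rho\cdot 2^{-l} + 2^{-l}$.
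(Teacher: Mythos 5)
Your proof is correct and follows essentially the same route as the paper: both reduce the claim to the $\ell^\infty$ triangle inequality $\|z-x_{Q_y}\|_\infty \le \|z-f(t)\|_\infty + \|f(t)-f(y)\|_\infty + \|f(y)-x_{Q_y}\|_\infty$ and bound the three terms by $2^{-l}$, $\rho\cdot 2^{-l}$, $2^{-l}$ respectively. The only cosmetic difference is that the paper works one coordinate $j$ at a time while you phrase it directly in the maximum norm; the estimates and the use of $\rho\ge 1$ to absorb $2+\rho\le 3\rho$ are identical.
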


\begin{proof}
Let $Q_t=\{t\} \times D_t$, and let $v \in D_t$ be arbitrary. Then by definition, 
\begin{equation}
|v_j-f(t)_j| < 2^{-l}
\label{v1}
\end{equation}
for each coordinate $j=1,\dots,n$. 

Moreover, by the definition of $\garo$ (see \eqref{garo}), and by $t \in \ol{B}(y,2^{-l})$, 
\begin{equation}
|f(t)_j-f(y)_j| \leq |f(t)-f(y)| \leq \rho |t-y| \leq \rho \cdot 2^{-l} 
\label{v2}
\end{equation}
for each $j=1,\dots,n$. 

Let $Q_y=\{y\} \times D_y$, and $D_y=z_{y}+2^{-l} \cdot (0,1)^n$. Then we have 
\begin{equation}
|z_{y,j}-f(y)_j| < 2^{-l}
\label{v3}
\end{equation}
for each $j=1,\dots,n$.

Combining \eqref{v1},  \eqref{v2}, and \eqref{v3}, and using  $\rho \geq 1$ we obtain that 
$$|v_j-z_{y,j}| < 2^{-l} + \rho \cdot 2^{-l} + 2^{-l} \leq 3 \cdot \rho \cdot  2^{-l}$$
for each $j=1,\dots,n$. 

Then $v$ is contained in the cube $z_{y}+3 \rho 2^{-l} [-1,1]^n$. 
This immediately implies by the definition of $C_y$ (see \eqref{ct}) that $(t,v)$ is contained in $C_y$, so $Q_t \su C_y$, which completes the proof  of Lemma \ref{local}. 

\end{proof}

By Remark \ref{disjoint}, for any $u \in U$, 
\begin{equation}
A_{u,l_1} \cap A_{u,l_2} = \emptyset \ \text{for all} \ l_1 \neq l_2.
\label{auldisj}
\end{equation}

For each $u \in U$, let
$$R_u=\bigcup_{l \geq l_0}A_{u,l} \su A.$$

\begin{remark}
\label{aulmeas}
It is easy to check using Lemma \ref{meas} that $A_{u,l}$ is 
$\hal$-measurable for all $l \geq l_0$. 
\end{remark}

Let $a_{u,l}=\hal(A_{u,l})$ for $l \geq l_0$. By \eqref{auldisj}, Remark \ref{aulmeas}, and \eqref{a2} of Lemma \ref{aa}, we have that 
\begin{equation}
\sum_{l=l_0}^{\infty} a_{u,l} = \hal(R_u) \leq \hal(A) \leq 1 
\label{aulsum}
\end{equation}
for each $u \in U$. 

Fix $u \in U$, and for simplicity, let $A_l=A_{u,l}, a_l=a_{u,l}$ for $l \geq l_0$. Recall the properties of $T$ from \eqref{tdef} and \eqref{ahl}. 

We define 
\begin{equation}
A_l'=\{y \in A_l: \frac{\hal(A_l \cap \ol{B}(y,2^{-l}))}{\hal(T \cap \ol{B}(y,2^{-l}))} \leq \ep \cdot a_l\},
\label{ald}
\end{equation}
the subset of $A_l$ consisting of points which are centers of such closed balls of radius $2^{-l}$ in which the density of $A_l$ is at most 
$\ep \cdot a_l$.

\begin{lemma}
\label{alclaim}
We have 
$
\hal(A_l') \lkb_k \ep \cdot a_l$ for each $l \geq l_0$.
\end{lemma}

\begin{proof}
We apply Besicovitch's covering theorem (see e.g. \cite[Theorem 2.7]{Ma}) for the collection $\ib_l$ of closed balls of radius $2^{-l}$ 
centered at the points of $A_l'$. We obtain that there exists a constant $c(k)$ depending only on $k$ and collections of balls $B_1,\dots, B_{c(k)} \su \ib_l$ such that 
$$A_l' \su \bigcup_{i=1}^{c(k)} \bigcup_{B \in B_i} B,$$ 
and $B_i$ is a disjoint collection for each $i$, that is, 
$$B \cap B'=\emptyset \ \text{for all} \ B, B \in B_i, B \neq B'.$$
Then we have by \eqref{ald} that 
$$\hal(A_l') \leq \sum_{i=1}^{c(k)} \hal(\bigcup_{B \in B_i} (B \cap A_l)) \leq \sum_{i=1}^{c(k)}  \ep \cdot a_l \cdot \sum_{B \in B_i} \hal(B \cap T).$$ 
Using that each $B_i$ is a disjoint collection, $T$ is Borel, and \eqref{tdef}, we obtain that 
$$\hal(A_l') \leq \sum_{i=1}^{c(k)} \ep \cdot a_l \cdot \hal(\bigcup_{B \in B_i} (B \cap T)) \leq c(k) \cdot \ep  \cdot a_l \cdot \hal(T)
\lkb_k \ep \cdot a_l$$ 
and the proof of Lemma \ref{alclaim} concludes. 
\end{proof}

Let 
\begin{equation}
A_l''=A_l \se A_l'=\{y \in A_l: \frac{\hal(A_l \cap \ol{B}(y,2^{-l}))}{\hal(T \cap \ol{B}(y,2^{-l}))} > \ep \cdot a_l\},
\label{alc}
\end{equation}

\begin{equation}
\ij=\{l \geq l_0: a_l \geq \frac{2^{-l\ep}}{c \cdot\ep}\}, \ \text{and} \  
\ij'=\{l \geq l_0: a_l < \frac{2^{-l\ep}}{c \cdot\ep}\}, 
\label{ij}
\end{equation}
where $c$ is from \eqref{ahl}.

We prove the following lemma. 
\begin{lemma}
\label{clco}
If $l \in \ij$ and  $y \in A_l''$, then $C_y \in \ic$, where $C_y$ is from \eqref{ct}, and $\ic$ is defined in \eqref{ic}. 
\end{lemma}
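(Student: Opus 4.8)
The plan is to unwind the definition of a \emph{good} cylinder \eqref{goodci} and to check that, for $l\in\ij$ and $y\in A_l''$, the cylinder $C_y$ of \eqref{ct} satisfies the defining inequality; since $C_y=C_{Q_y}$ is one of the covering cylinders (with $Q_y\in\iq^y$), this will give $C_y\in\ic$ (see \eqref{ic}). Throughout fix the $u\in U$ with $y\in A_{u,l}$, and abbreviate $A_l=A_{u,l}$, $a_l=a_{u,l}=\hal(A_l)$; note $r_{C_y}=2^{-l}$ and $y_{C_y}=y$, so that \eqref{goodci} for $C=C_y$ reads $2^{-l(\al+\be+\ep)}\le \int_{A\cap\ol B(y,2^{-l})}\sum_{Q\in\iq^t,\,Q\su C_y}s_Q^{\be}\,d\hau^{\al}(t)$.

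First I would bound the integrand from below on a well-chosen set. Let $t\in A_l\cap\ol B(y,2^{-l})$. Since $y,t\in A_{u,l}$ and $t\in\ol B(y,2^{-l})$, Lemma~\ref{local} gives $Q_t\su C_y$, where $Q_t\in\iq^t_{u,l}\su\iq^t$ is the cube from \eqref{qt}; as $s_{Q_t}=2^{-l}$ we obtain
\[
\sum_{Q\in\iq^t,\ Q\su C_y}s_Q^{\be}\ \ge\ s_{Q_t}^{\be}\ =\ 2^{-l\be}
\qquad\text{for }t\in A_l\cap\ol B(y,2^{-l}).
\]
(By Remarks~\ref{remeas} and \ref{aulmeas} the integrand is $\hal$-measurable and $A_l$ is $\hal$-measurable, so this is legitimate.) Integrating over $A_l\cap\ol B(y,2^{-l})\su A\cap\ol B(y,2^{-l})$,
\[
\int_{A\cap\ol B(y,2^{-l})}\ \sum_{Q\in\iq^t,\ Q\su C_y}s_Q^{\be}\ d\hau^{\al}(t)
\ \ge\ 2^{-l\be}\,\hal\bigl(A_l\cap\ol B(y,2^{-l})\bigr).
\]

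Next I would plug in the density condition defining $A_l''$ together with the Ahlfors--David lower bound. Since $y\in A_l''$, \eqref{alc} gives $\hal(A_l\cap\ol B(y,2^{-l}))>\ep\,a_l\,\hal(T\cap\ol B(y,2^{-l}))$, and since $y\in A\su T$ and $0<2^{-l}\le 1$, \eqref{ahl} gives $\hal(T\cap\ol B(y,2^{-l}))\ge\hal(T\cap B(y,2^{-l}))\ge c\,2^{-l\al}$. Combining the last two displays,
\[
\int_{A\cap\ol B(y,2^{-l})}\ \sum_{Q\in\iq^t,\ Q\su C_y}s_Q^{\be}\ d\hau^{\al}(t)
\ \ge\ c\,\ep\,a_l\,2^{-l(\al+\be)}.
\]
Finally, $l\in\ij$ means exactly $a_l\ge 2^{-l\ep}/(c\,\ep)$, i.e.\ $c\,\ep\,a_l\ge 2^{-l\ep}$, so the right-hand side is at least $2^{-l(\al+\be+\ep)}=r_{C_y}^{\,\al+\be+\ep}$. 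This is precisely \eqref{goodci} for $C=C_y$, hence $C_y$ is good and $C_y\in\ic$, completing the proof of Lemma~\ref{clco}.

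I do not expect a genuine obstacle here: once Lemma~\ref{local} is used to single out $A_l\cap\ol B(y,2^{-l})$ as the set to integrate over, the rest is bookkeeping. The two points needing care are (a) that the Ahlfors--David estimate \eqref{ahl} really applies — this is why we use $y\in T$ and radius $2^{-l}\le 1$ — and that the sets and integrand involved are measurable; and (b) observing that the definition of $\ij$ in \eqref{ij} is calibrated precisely so that the gain $c\,\ep\,a_l$ absorbs the loss $2^{-l\ep}$ coming from the extra $\ep$ in the exponent $\al+\be+\ep$ of \eqref{goodci}. There is no slack to spare, which is exactly why the hypothesis restricts to $l\in\ij$ rather than to all $l\ge l_0$.
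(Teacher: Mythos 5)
Your proof is correct and follows the paper's argument essentially verbatim: you apply Lemma~\ref{local} to obtain $Q_t\subset C_y$ and hence the pointwise lower bound $2^{-l\beta}$ on the integrand over $A_l\cap\ol B(y,2^{-l})$, then chain the density bound from \eqref{alc}, the Ahlfors--David lower bound \eqref{ahl}, and the defining inequality of $\ij$ from \eqref{ij}, exactly as in the paper (its displays \eqref{z1} and \eqref{z2}). The only cosmetic difference is that you explicitly pass from the open ball of \eqref{ahldef} to the closed ball via $\hal(T\cap\ol B)\ge\hal(T\cap B)$, a small technicality the paper leaves implicit.
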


\begin{proof}
By Lemma \ref{local}, we have that 
\begin{align}
\label{z1}
\int_{\ol{B}(y,2^{-l}) \cap A} \sum_{Q \in \iq^t, Q \su C_y, } {s_Q}^{\be} \ d\hau^{\al}(t) & 
\geq \int_{\ol{B}(y,2^{-l}) \cap A_l} \sum_{Q \in \iq^t, Q \su C_y, } {s_Q}^{\be} \ d\hau^{\al}(t) \geq \nonumber \\
& \geq \int_{\ol{B}(y,2^{-l}) \cap A_l} {s_{Q_t}}^{\be} \ d\hau^{\al}(t) = \nonumber \\
& = \hal(\ol{B}(y,2^{-l}) \cap A_l) \cdot 2^{-l\be}.
\end{align}

Using $y \in A_l''$, \eqref{ahl}, and $l \in \ij$, we have that 
\begin{align}
\label{z2}
\hal(\ol{B}(y,2^{-l}) \cap A_l) & > \ep \cdot a_l \cdot \hal(\ol{B}(y,2^{-l}) \cap T) \geq \ep \cdot a_l \cdot c \cdot 2^{-l\al} \geq \nonumber \\
& \geq \ep \cdot \frac{2^{-l\ep}}{c \cdot \ep} \cdot c \cdot 2^{-l\al}=2^{-l(\ep + \al)},
\end{align}
where $c$ is from \eqref{ahl}. 
Combining \eqref{z1} and \eqref{z2} we obtain that 
$$\int_{\ol{B}(y,2^{-l}) \cap A} \sum_{Q \in \iq^t, Q \su C_y, } {s_Q}^{\be} \ d\hau^{\al}(t)  \geq 2^{-l(\ep + \al+\be)}.$$ 
Comparing with the definition in \eqref{goodci}, we conclude that $C_y$ is good, which completes the proof of Lemma \ref{clco}.  
\end{proof}
Now we continue with the proof of Lemma \ref{lem1}. 

Let $S=\{t \in \rr^k: (t,f(t)) \in B \se \ti{B} \}$, where $\ti{B}=B \cap \left(\bigcup_{C \in \ic} \bel{C}\right)$ defined in \eqref{tib}. 
We need to show that $\hal(S) \lkb_{n,k} \ep$. 

Clearly, 
\begin{equation}
S \su  \bigcup_{u \in U} \bigcup_{l \geq l_0} A_{u,l} = \bigcup_{u \in U} \left(\bigcup_{l  \in \ij} A_{u,l} \cup \bigcup_{l  \in \ij'} A_{u,l} \right).
\label{w1}
\end{equation}

In fact, by the definition of $\ti{B}$ and Lemma \ref{clco}, we have that 
$$S \su \bigcup_{u \in U} \left(\bigcup_{l  \in \ij} A_{u,l}' \cup  \bigcup_{l  \in \ij'} A_{u,l}  \right),
$$
thus
\begin{equation}
\hal(S) \leq \sum_{u \in U} \left(\sum_{l \in \ij} \hal(A_{u,l}') + \sum_{l \in \ij'} \hal(A_{u,l}) \right).
\label{w3}
\end{equation}

For any fixed $u$, recall the definition of $\ij'$ from \eqref{ij} (which depends on $u$ since $a_l=a_{u,l}$). We conclude from \eqref{ij} and \eqref{lcond} that 
\begin{equation}
\sum_{l \in \ij'} \hal(A_{u,l})=\sum_{l \in \ij'} a_{u,l} \leq \sum_{l=l_0}^{\infty} \frac{2^{-l\ep}}{c \cdot\ep} \leq \ep.
\label{w4}
\end{equation}

By Lemma \ref{alclaim} and \eqref{aulsum}, we have  
\begin{equation}
\sum_{l \in \ij} \hal(A_{u,l}')  \lkb_k  \sum_{l \in \ij} \ep \cdot a_{u,l} \leq \ep. 
\label{w5}
\end{equation}

Combining \eqref{w3}, \eqref{w4} and \eqref{w5}, we obtain that 
$$\hal(S) \lkb_{k} |U| \cdot \ep=2^n \cdot\ep \lkb_{n,k}  \ep,$$ 
which finishes the proof of Lemma \ref{lem1}.

\section*{Acknowledgment}
The authors are grateful to Rich\'ard Balka for the helpful conversation related to Brownian motion in Proposition~\ref{p:Brownian}. 
We are also grateful to M\'arton Elekes, Lajos Soukup, and  Zolt\'an Vidny\'anszky for providing insight into the set theoretical aspects of Lemma \ref{meas}. 
We are especially grateful to Zolt\'an Vidny\'anszky for writing the proof in the Appendix. 
The authors also thank the anonymous referee for the careful reading and helpful comments. 

\appendix
\label{app}
\section{The ZFC proof of Lemma~\ref{genmeas}}
We show that the statement of Lemma \ref{genmeas} holds in ZFC. For the convenience of the reader, let us recall it: 
	\begin{align}
	\label{eqpi13Zoli1}
	& 
	\forall \ T \subset \R^k \ \text{Borel}, \ \forall \ \epsilon>0, \ \forall \ B \subset T \times \R^n \ \text{Borel,} \\ \nonumber
	& (0=\Hausdorff{\alpha}(T) \ \text{or} \ \Hausdorff{\alpha}(T)=\infty \ \text{or} \ \exists \ t \in \proj_{\R^k}(B) \ \text{such that} \ \Hausdorff{\beta}(B_t)>0), \\ \nonumber
	& \text{or } \ (\exists \ F \subset A \ \text{Borel  and} \ \exists \ 
	\phi: F \to H \ \text{Borel measurable such that} \\ \nonumber
	& (\Hausdorff{\alpha}(F)=\Hausdorff{\alpha}(A) \ \text{and} \ \forall t \in F, B_t \subset \mathcal{U}(\phi(t)) \ \text{and} \ \mathcal{S}(\phi(t))) \leq \epsilon )) . 
	\end{align}
	
	We check that the above statement is equivalent to the following, which will be more convenient for us to use:
	\begin{align}
	\label{eqpi13Zoli2}
	& 
	\forall \ T \subset \R^k \ \text{Borel}, \ \forall \ \epsilon>0, \ \forall \ B \subset T \times \R^n \ \text{Borel,} \\ \nonumber
	& 0=\Hausdorff{\alpha}(T) \ \text{or} \ \Hausdorff{\alpha}(T)=\infty \ \text{or} \ \exists \ t  \ \text{such that} \ \Hausdorff{\beta}(B_t)>0 \\ \nonumber
	& \text{or } \ \exists \ F \subset T \ \text{Borel  and} \ \exists \ 
	\phi: \R^k \to H \ \text{Borel measurable such that} \\ \nonumber
	& \Hausdorff{\alpha}(T \setminus F)=0 \ \text{and} \ \forall t \in F, B_t \subset \mathcal{U}(\phi(t)) \ \text{and} \ \mathcal{S}(\phi(t) \leq \epsilon . 
	\end{align}
To see that \eqref{eqpi13Zoli1} implies \eqref{eqpi13Zoli2}: let $F \su  A$ with $\hal(F)=\hal(A)$ as in \eqref{eqpi13Zoli1}. Using that  $\hal(T)<\infty$, choosing $U \su T \setminus A$ Borel with $\hal(U)=\hal(T \setminus A)$ and setting $\ti{F}=F \cup U$, we have that $\ti{F} \su T$ Borel with  $\hal(T \setminus \ti{F})=0$. Moreover, we can extend the function $\phi$ arbitrarily to the whole space to get \eqref{eqpi13Zoli2}. 
For the converse, let $F \su  T$ with $\hal(T \setminus F)=0$ as in \eqref{eqpi13Zoli2}. Using that $\hal(T)<\infty$, choosing $U \su A$ Borel with $\hal(A \setminus U)=0$ and taking $\ti{F}=F \cap U$, we have that $\ti{F} \su A$ Borel with $\hal(\ti{F})=\hal(A)$.  Restricting $\phi$ to this set yields \eqref{eqpi13Zoli1}.

	Now we introduce the concept of formulas, and will show that \eqref{eqpi13Zoli2} is equivalent to a so called $\Pi^1_3$ formula. 	Let $\mathcal{X}$ be the set of \emph{recursively presented} Polish spaces with a fixed presentation, which in particular includes a fixed enumeration of basic open sets of the space. 
This $\mathcal{X}$ naturally includes all the spaces $\N,2^\N,\R^k$, and finite products and spaces of compact subsets (equipped with the Hausdorff topology) of all of those (see, \cite[3.B]{Mo}). 
	
	A formula is called $\Sigma^1_n$ if it has the form
	\[\underbrace{\exists x_1 \in X_1 \forall x_2 \in X_2 \dots}_{\text{$n$-many}} \psi(x_1,\dots,x_n),\]
	where $X_i \in \mathcal{X}$ are uncountable spaces, and $\psi$ uses only quantifiers over $\N$, recursive functions between powers of $\N$, and the enumeration of the basic open sets of spaces of $\mathcal{X}$, together with the $\epsilon$-relation and the usual logical operations. 
The $\Pi^1_n$ formulas are defined by formulas similar to the ones above, but starting with a universal quantifier instead of the existential quantifier. In particular, $\Pi^1_3$ formulas are of the form: 
$$\forall x_1 \in X_1 \exists x_2 \in X_2 \forall x_3 \in X_3 \ \psi(x_1,x_2,x_3).$$
	
	It is easy to see that using multiple quantifiers of the same sort consecutively, or adding quantifications over $\N$ do not change the above classes. Note also that this notion coincides with the usual definition of $2$nd order formulas over the structure $(\N,+,\cdot)$. We will say that a set $A \subset X$ for some $X \in \mathcal{X}$ is $\Sigma^1_n$ (resp $\Pi^1_n$), if its definable by such a formula. Using this notation, analytic sets are $\Sigma^1_1$, co-analytic sets are $\Pi^1_1$, and projections of co-analytic sets are $\Sigma^1_2$. 
\begin{lemma}
	\label{l:shoenfield}
	Let $\varphi$ be a $\Pi^1_3$ formula such that $\varphi$ follows from the
	Lebesgue measurability of  $\mathbf{\Sigma}^1_2$ subsets of $[0,1]$.
	Then $\varphi$ follows from ZFC.
	
\end{lemma}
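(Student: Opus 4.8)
The plan is to derive $\varphi$ in ZFC by a forcing argument combined with Shoenfield absoluteness (\cite[Theorem 25.20]{Je}), exploiting that $\mathbf{\Sigma}^1_2$-measurability — unlike the measurability of all projective sets — can be obtained by a set forcing over \emph{any} model of ZFC, with no large cardinal hypothesis. Let $V$ be an arbitrary model of ZFC; it suffices to show $V\models\varphi$. Write the $\Pi^1_3$ sentence $\varphi$ as $\forall x\,\chi(x)$, where $x$ ranges over a recursively presented Polish space and $\chi(x)\equiv\exists y\,\forall z\,\psi(x,y,z)$ is $\Sigma^1_2$ with parameter $x$ (consecutive like quantifiers and number quantifiers being absorbed as remarked above; if $\varphi$ carries finitely many real parameters, fix them once and for all — the argument is unchanged, since they lie in $V$).

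First I would pass to a forcing extension in which $\mathbf{\Sigma}^1_2$-measurability holds. By the forcing construction underlying \cite[Corollary 9.3.2]{BJ} (e.g. forcing CH and then adding sufficiently many random reals with the measure algebra — a set forcing available over any model of ZFC), there is a notion of forcing $\mathbb{P}\in V$ such that $V^{\mathbb{P}}$ satisfies ZFC together with the statement that every $\mathbf{\Sigma}^1_2$ subset of $[0,1]$ is Lebesgue measurable. By the hypothesis of the lemma, $\varphi$ is provable from these axioms, so $V^{\mathbb{P}}\models\varphi$, i.e. $V^{\mathbb{P}}\models\chi(x_0)$ for every real $x_0$ of $V^{\mathbb{P}}$, and in particular for every $x_0\in\mathbb{R}\cap V$.

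Then I would transfer each such instance back to $V$. Fix $x_0\in\mathbb{R}\cap V$. Forcing adds no ordinals, so $L[x_0]$ as computed in $V$ coincides with $L[x_0]$ as computed in $V^{\mathbb{P}}$. Since $\chi(x_0)$ is $\Sigma^1_2$ in the parameter $x_0$, Shoenfield absoluteness applied to the inner model $L[x_0]$ of $V$ and to the inner model $L[x_0]$ of $V^{\mathbb{P}}$ yields $V\models\chi(x_0)\iff L[x_0]\models\chi(x_0)\iff V^{\mathbb{P}}\models\chi(x_0)$. As the right-hand side holds, $V\models\chi(x_0)$. Since $x_0\in\mathbb{R}\cap V$ was arbitrary, $V\models\forall x\,\chi(x)$, i.e. $V\models\varphi$; and since $V$ was an arbitrary model of ZFC, $\mathrm{ZFC}\vdash\varphi$.

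The only genuinely nontrivial ingredient is the forcing producing $\mathbf{\Sigma}^1_2$-measurability without large cardinals, and this is precisely \cite[Corollary 9.3.2]{BJ}, so I expect no real obstacle. The two places that need care are: (i) the measurability of $\mathbf{\Sigma}^1_2$ sets must hold in the extension for \emph{all} real parameters, including those added by $\mathbb{P}$ — but this is automatic, because we invoke the lemma's hypothesis (a theorem schema of $\mathrm{ZFC}+\text{``}\mathbf{\Sigma}^1_2\text{-measurability''}$) inside the model $V^{\mathbb{P}}$ itself; and (ii) only a $\Sigma^1_2$ statement with a \emph{ground-model} real parameter is pulled back, so Shoenfield absoluteness applies verbatim. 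Note also that attempting the same argument with the full Solovay model would require an inaccessible cardinal, which is exactly why the $\mathbf{\Sigma}^1_2$-level statement, with its large-cardinal-free consistency proof, is what makes the conclusion a theorem of plain ZFC.
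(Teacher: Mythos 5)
Your argument is correct and is essentially the paper's proof: force a model of $\mathbf{\Sigma}^1_2$-measurability and use Shoenfield absoluteness to conclude that the $\Pi^1_3$ sentence $\varphi$ transfers downward to the ground model; your passage through the inner model $L[x_0]$ for each ground-model parameter is exactly the standard argument behind the downward absoluteness of $\Pi^1_3$ that the paper cites as known. Two small points of divergence from the paper's write-up, neither affecting correctness. First, \cite[Corollary 9.3.2]{BJ} is only the implication $\mathrm{add}(\mathcal{N})>\aleph_1\Rightarrow\mathbf{\Sigma}^1_2\text{-measurability}$, not a forcing construction; the paper supplies the forcing separately (via \cite[Model 7.6.1]{BJ} and \cite[Theorem 6.17]{Ku}, an amoeba-type model making $\mathrm{add}(\mathcal{N})>\aleph_1$), whereas you suggest the random-real model, which also satisfies $\mathbf{\Sigma}^1_2$-measurability by the Judah--Shelah characterization but is not what the cited corollary describes. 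Second, you close by quantifying over arbitrary models of ZFC and invoking completeness, which calls for some care about the existence of generics over uncountable models (or a boolean-valued reformulation); the paper sidesteps this by working with finite fragments of ZFC --- whose models are available by reflection --- and finishing with the compactness theorem.
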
	
\begin{proof}
 	By Shoenfield's Absoluteness Theorem (\cite[Theorem 25.20]{Je}), $\Pi^1_3$ formulas are downward absolute. Consequently, if such a formula holds in a forcing extension of a model of every large enough fragment of ZFC, then it must hold in the original model as well.
	
	It is well known (see e.g. \cite[Corollary 9.3.2]{BJ}) 
that ${\rm add}({\mathcal{N}})>\aleph_1$ 
(which means that the union of $\aleph_1$ sets of Lebesgue
measure zero must have Lebesgue measure zero) implies that 
every $\mathbf{\Sigma}^1_2$ subset of $\R$ is Lebesgue measurable.	  	
 	
 	It is also a folklore statement of set theory (and can be deduced
 	for example from \cite[Model 7.6.1]{BJ} and \cite[Theorem 6.17]{Ku}) that
 	every  model of some large enough fragment of ZFC admits a forcing extension where 
${\rm add}({\mathcal{N}})>\aleph_1$.
Hence $\varphi$ must hold in each model of a large enough fragment of ZFC. So it follows from ZFC by the compactness theorem.
\end{proof}

	In order to establish \eqref{eqpi13Zoli2} in ZFC, we will show the following. 
\begin{lemma}
\label{l:pi13lem}
\eqref{eqpi13Zoli2} is equivalent to a $\Pi^1_3$ formula. 
\end{lemma}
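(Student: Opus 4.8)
The plan is to rewrite \eqref{eqpi13Zoli2} as a second-order arithmetic sentence over $(\N,+,\cdot)$ and then compute the projective complexity of each subformula, the target being a $\Pi^1_3$ sentence (to which Lemma~\ref{l:shoenfield}, via Proposition~\ref{*prop}, then applies). First I would code the data by reals: $T$ and $B$ by Borel codes $c_T,c_B$, and $\epsilon$ by a natural number $m$ with $\epsilon=1/m$ — this is legitimate because the only disjunct that depends on $\epsilon$, the ``$\exists F,\phi$'' one, is monotone in $\epsilon$, so ``$\forall\epsilon>0$'' over the whole disjunction becomes ``$\forall m$''. Since the original statement only concerns codes that code a genuine instance (a Borel code $c_T$; a Borel code $c_B$ with $B_{c_B}\su T_{c_T}\x\R^n$), the matrix will have the shape $\neg\mathrm{Valid}(c_T,c_B)\vee[\text{five-way disjunction}]$; using the standard facts that the set of Borel codes is $\Pi^1_1$ and that ``$x\in B_c$'' has a $\Sigma^1_1$ and a $\Pi^1_1$ decoding formula agreeing on all Borel codes, $\mathrm{Valid}$ is $\Pi^1_1$ and its negation $\Sigma^1_1$.

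The core is the complexity count for each disjunct. All spaces involved — $\R^k$, $\R^n$, $H=2^{\widetilde{\mathcal{D}}}$, the hyperspace $\mathcal{K}(\R^k)$, and their products — are recursively presented Polish spaces, with $\widetilde{\mathcal{D}}$ a recursive family of open cubes, $h\mapsto\mathcal{U}(h)$ continuous into codes for open subsets of $\R^n$, and $h\mapsto\mathcal{S}(h)$ lower semicontinuous. A Borel-measurable $\phi\colon\R^k\to H$ is coded by a sequence $(c_j)_j$ of Borel codes via $\phi(t)(j)=1\iff t\in B_{c_j}$; then every such sequence is admissible, ``$c_\phi$ codes a Borel function'' is $\Pi^1_1$, and ``$\phi(t)=h$'' again has a $\Sigma^1_1$ and a $\Pi^1_1$ form. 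Now: (i) via the covering definition, ``$\hal(S)=0$'' for $S$ Borel is $\Sigma^1_2$ — write it as $\forall q\in\Q^+\,\exists(U_i)_i\big[\,S\su\bigcup_i U_i\ \wedge\ \sum_i(\diam U_i)^{\al}<q\,\big]$ with $(U_i)$ a real coding a sequence of basic balls, decoding ``$x\in S$'' inside ``$S\su\bigcup_i U_i$'' by the $\Sigma^1_1$ formula so the $\forall x$ is applied to a $\Pi^1_1$ matrix; then the bracket is $\Pi^1_1$, $\exists(U_i)$ makes it $\Sigma^1_2$, and the countable $\forall q$ preserves $\Sigma^1_2$; (ii) by inner regularity of Hausdorff measure on analytic sets (classical, used in the paper in the form of Howroyd's theorem) together with Borel measurability of $K\mapsto\hal(K)$ on $\mathcal{K}(\R^k)$ (a countable supremum of the upper semicontinuous maps $K\mapsto\hal_\delta(K)$), both ``$\hal(T)=\infty$'' and ``$\exists t\colon\hbe(B_t)>0$'' are $\Sigma^1_2$ — write them as $\forall q\,\exists K[\,K\su T\wedge\hal(K)>q\,]$, resp.\ $\exists t\,\exists K[\,K\su B_t\wedge\hbe(K)>0\,]$, where ``$K\su(\cdot)$'' is $\Pi^1_1$ and ``$\hal(K)>q$'' (resp.\ ``$\hbe(K)>0$'') is Borel; (iii) ``$\forall t\in F\colon B_t\su\mathcal{U}(\phi(t))$'', ``$\forall t\colon\mathcal{S}(\phi(t))\le\epsilon$'', ``$F_{c_F}\su T_{c_T}$'' and the validity of $c_F,c_\phi$ are all $\Pi^1_1$, since at each inner membership/evaluation one decodes so that every universal real quantifier sees only a $\Pi^1_1$ matrix.

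Assembling, the ``$\exists F,\phi$'' disjunct is $\exists c_F\,\exists c_\phi\big[\,\mathrm{valid}(c_F)\wedge\mathrm{valid}(c_\phi)\wedge(F\su T)\wedge(\hal(T\se F)=0)\wedge(\forall t\in F\ B_t\su\mathcal{U}(\phi(t)))\wedge(\forall t\ \mathcal{S}(\phi(t))\le\epsilon)\,\big]$, whose bracket is a finite conjunction of $\Pi^1_1$ conditions with the single $\Sigma^1_2$ condition ``$\hal(T\se F)=0$'', hence $\Sigma^1_2$, and the two existential real quantifiers keep it $\Sigma^1_2$. So the whole five-way disjunction (one $\Sigma^1_1$ disjunct and four $\Sigma^1_2$ ones) is $\Sigma^1_2$, and prefixing $\forall c_T\,\forall c_B\,\forall m$ yields a $\Pi^1_3$ sentence, provably equivalent to \eqref{eqpi13Zoli2}. (The real parameters $\al,\be$ are carried along — Shoenfield absoluteness relativizes to them — or one prepends $\forall\al\,\forall\be$ with the side conditions $0<\al\le k$, $0<\be\le n$, still $\Pi^1_3$.)

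The work is not in any single step but in the uniform bookkeeping, and the one genuine pitfall is the choice of definition for ``$\hal(\cdot)=0$'': it must be expressed via coverings (giving $\Sigma^1_2$), not via inner regularity ``$\forall K(K\su\cdot\to\hal(K)=0)$'' (which gives only $\Pi^1_2$), because for the conjunct ``$\hal(T\se F)=0$'' sitting under $\exists c_F\,\exists c_\phi$ a $\Pi^1_2$ matrix would make that disjunct $\Sigma^1_3$, and $\Sigma^1_3\not\su\Pi^1_3$; symmetrically, ``$\hal(\cdot)=\infty$'' and ``$\hbe(\cdot)>0$'' are best handled by inner regularity. Everything else — the recursive presentations of $H$ and $\widetilde{\mathcal{D}}$, continuity of $\mathcal{U}$ and lower semicontinuity of $\mathcal{S}$, the $\Sigma^1_1$/$\Pi^1_1$ decoding facts for Borel codes, and the closure of $\Sigma^1_2$ under finite and countable unions and intersections and under real existential quantification — is routine.
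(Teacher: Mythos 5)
Your proposal is correct and follows essentially the same route as the paper: code the Borel data by Borel codes, write ``$\hal(\cdot)=0$'' via open covers to make it $\Sigma^1_2$ and ``$\hal(\cdot)=\infty$'' / ``$\hbe(\cdot)>0$'' via inner regularity on compacts to make them $\Sigma^1_2$, assemble the disjunction (a mix of $\Pi^1_1$ and $\Sigma^1_2$ conditions under two existential real quantifiers) as $\Sigma^1_2$, and prefix the outer universal real quantifiers to land in $\Pi^1_3$; your isolation of the covering-vs.-inner-regularity dichotomy as the genuine pitfall matches exactly how the paper handles observations \ref{c:hnull} and \ref{c:hinfty}. The one small deviation is your coding of Borel maps $\phi\colon\R^k\to H$ as a sequence $(c_j)_j$ of Borel codes for the sets $\{t:\phi(t)(j)=1\}$ rather than as a single Borel code of the graph; this is a mild simplification, since it makes the validity condition a straightforward countable conjunction of $\Pi^1_1$ statements and sidesteps the paper's Lemma~\ref{l:codesoffunctions} (which needs effective uniformization and Kleene's theorem to show that $\Borelfunctions{X,Y}$ is $\Pi^1_1$), while the minor observation that $\forall\epsilon>0$ can be replaced by $\forall m$ is harmless but unnecessary, as an extra universal real quantifier in the leading block does not change the $\Pi^1_3$ classification.
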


Then, recalling that by Proposition \ref{*prop}, \eqref{eqpi13Zoli2} holds if all $\Sigma^1_2$ subsets of 
$[0,1]$ are Lebesgue measurable,
combining this with Lemma \ref{l:shoenfield} and \ref{l:pi13lem} yields \eqref{eqpi13Zoli2} in ZFC and the proof of Lemma \ref{genmeas} concludes. 

	\begin{proof}
	To prove Lemma \ref{l:pi13lem}, we will use the concepts of Borel codes. Let us collect their basic properties (see \cite[3.H]{Mo}). For any $X \in \mathcal{X}$ there exist sets $\Borelcodes{X} \subset 2^\N$, $\Analytic{X},\Coanalytic{X} \subset 2^\N \times X$ satisfying the following:
		\begin{itemize}
			\item $\Analytic{X}$ is $\Sigma^1_1$, $\Borelcodes{X}$ and $\Coanalytic{X}$ are $\Pi^1_1$,
			\item for each Borel set $B \subset X$ there exists a $c \in \Borelcodes{X}$ with $B=\Analytic{X}_c=\Coanalytic{X}_c$,
			\item for every $c \in \Borelcodes{X}$ we have $\Analytic{X}_c=\Coanalytic{X}_c$, and $\Analytic{X}_c$ is Borel, since it is both analytic and co-analytic.
		\end{itemize}
	Let us use the notation $\Borelcodes{k},\Analytic{k},\Coanalytic{k}$ instead of $\Borelcodes{\R^k},\Analytic{\R^k},\Coanalytic{\R^k}$, respectively.
	
	We will need an encoding of the Borel functions as well, denote by $\Borelfunctions{X,Y}$ the set of codes $c \in \Borelcodes{X \times Y}$ such that $\Analytic{X \times Y}_c$ is the graph of a total Borel function from $X$ to $Y$. 
	
	\begin{lemma}
		 \label{l:codesoffunctions} For any $X,Y \in \mathcal{X}$  the set $\Borelfunctions{X,Y}$ is $\Pi^1_1$. 
	\end{lemma}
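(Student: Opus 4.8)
The plan is to write membership in $\Borelfunctions{X,Y}$ as a conjunction of three $\Pi^1_1$ conditions and then invoke closure of $\Pi^1_1$ under finite intersection and under universal quantification over Polish spaces. By definition, $c\in\Borelfunctions{X,Y}$ iff \textbf{(a)} $c\in\Borelcodes{X\times Y}$; \textbf{(b)} the coded set $A:=\Analytic{X\times Y}_c$ is single-valued (every vertical section $A_x$ has at most one point); and \textbf{(c)} $A$ is total, i.e.\ $\proj_X(A)=X$. Condition \textbf{(a)} is $\Pi^1_1$ by the listed basic properties of Borel codes, and on the locus where \textbf{(a)} holds we have $\Analytic{X\times Y}_c=\Coanalytic{X\times Y}_c$, so that the relations ``$(x,y)\in A$'' and ``$(x,y)\notin A$'' are, in the variables $(c,x,y)$, simultaneously $\Sigma^1_1$ and $\Pi^1_1$; in what follows I use whichever reading is convenient, always intersecting the resulting formula with \textbf{(a)}.

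Condition \textbf{(b)} is the routine part. I would write it as
\[
\forall x\in X\ \forall y_1,y_2\in Y\ \big(\,(x,y_1)\notin\Analytic{X\times Y}_c\ \vee\ (x,y_2)\notin\Analytic{X\times Y}_c\ \vee\ y_1=y_2\,\big).
\]
Each set ``$(x,y_i)\notin\Analytic{X\times Y}_c$'' is $\Pi^1_1$, being the complement of the $\Sigma^1_1$ relation $\Analytic{X\times Y}$, and ``$y_1=y_2$'' is closed; a finite disjunction of $\Pi^1_1$ sets is $\Pi^1_1$, and the universal quantifiers over $X$ and $Y$ preserve $\Pi^1_1$. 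Hence \textbf{(a)}$\,\cap\,$\textbf{(b)} is $\Pi^1_1$.

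The real content is \textbf{(c)}. The naive reading ``$\forall x\,\exists y\,(x,y)\in A$'' is only $\Pi^1_2$: the inner $\exists y$ over a $\Sigma^1_1$ matrix gives a $\Sigma^1_1$ set, and a universal quantifier over $X$ turns that into a $\Pi^1_2$ set. So one must use that, under \textbf{(a)} and \textbf{(b)}, $A$ is the graph of a \emph{partial Borel function}; hence $\operatorname{dom}(A)=\proj_X(A)$, being the injective continuous image of the Borel set $A$ under $\proj_X|_A$, is Borel by the Luzin--Souslin theorem. What is actually needed is a \emph{uniform} version of this: a recursive function $e$ such that whenever $c\in\Borelcodes{X\times Y}$ codes a single-valued set, $e(c)\in\Borelcodes{X}$ and $\Analytic{X}_{e(c)}=\operatorname{dom}\big(\Analytic{X\times Y}_c\big)$. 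Granting it, \textbf{(c)} becomes $\Analytic{X}_{e(c)}=X$, i.e.\ $\forall x\in X\,\big(x\in\Analytic{X}_{e(c)}\big)$; since $e(c)\in\Borelcodes{X}$ we may replace $\Analytic{X}$ by $\Coanalytic{X}$, and since $e$ is recursive the set $\{(c,x):(e(c),x)\in\Coanalytic{X}\}$ is the preimage of the $\Pi^1_1$ set $\Coanalytic{X}$ under a recursive map, hence $\Pi^1_1$; the universal quantifier over $x$ then keeps it $\Pi^1_1$. (Where \textbf{(a)} or \textbf{(b)} fails, $c\notin\Borelfunctions{X,Y}$ and the conjunction \textbf{(a)}$\,\cap\,$\textbf{(b)}$\,\cap\,$\textbf{(c)} also fails, so nothing is lost by reading \textbf{(c)} through the possibly meaningless code $e(c)$ there.)

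Assembling the three pieces, $\Borelfunctions{X,Y}$ is an intersection of three $\Pi^1_1$ sets, hence $\Pi^1_1$. The only non-formal ingredient, and the step I expect to be the main obstacle, is the uniform/effective form of the Luzin--Souslin theorem used in \textbf{(c)} (a recursive passage from a Borel code of a single-valued set to a Borel code of its domain); this can be obtained by carrying out the classical ``constituent'' proof of ``an injective Borel image of a Borel set is Borel'' inside the $\Pi^1_1$-coding apparatus, or equivalently via effective $\Sigma^1_1$-bounded uniformization and $\Pi^1_1$-reflection, or simply quoted as a black box from the effective descriptive set theory literature. Everything else reduces to quantifier bookkeeping with the closure properties of $\Sigma^1_1$ and $\Pi^1_1$ and the fact, recalled above, that membership in a Borel-coded set is $\Delta^1_1$ uniformly in the code and the point.
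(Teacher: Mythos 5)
Your proposal is correct and the decomposition into (a) $c\in\Borelcodes{X\times Y}$, (b) single-valuedness, (c) totality matches the paper, and (a) and (b) are handled identically; the interesting divergence is in how you discharge (c). The paper rewrites the totality condition $\forall x\,\exists z\,\big((x,z)\in\Coanalytic{X\times Y}_c\big)$ as $\forall x\,\exists z\in\Delta^1_1(x,c)\,\big((x,z)\in\Coanalytic{X\times Y}_c\big)$ using the effective $\Delta$-selection principle (Moschovakis 4D.3), and then concludes it is $\Pi^1_1$ by Kleene's theorem (4D.4) that existential quantification bounded to $\Delta^1_1(x,c)$ preserves the class $\Pi^1_1$. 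You instead invoke a uniform effective Luzin--Souslin theorem to produce a recursive map $e$ sending a code of a single-valued Borel set to a Borel code of its domain, and then express totality as $\forall x\,\big((e(c),x)\in\Coanalytic{X}\big)$. Both routes are valid, and you correctly handle the off-locus issue (what $e(c)$ means when (a) or (b) fails). The trade-off is that your black box is heavier and, in practice, somewhat circular: the standard proof of the effective Luzin--Souslin/injective-image theorem goes precisely through the $\Delta$-selection and Kleene's-theorem machinery the paper cites directly, so the paper's argument is shorter and more self-contained, whereas yours conceals the same work inside the uniformization step you flag as the main obstacle. If you want to keep your route, the cleanest way to discharge the black box is to observe that, under (a) and (b), the relation ``$y$ is \emph{the} witness for $x$'' is $\Delta^1_1(c)$ uniformly, and then appeal to effective $\Delta^1_1$-uniformization --- which is exactly 4D.3, landing you back at the paper's argument.
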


 	Unfortunately, we were not able to locate a reference for this fact, so, for the sake of completeness we prove it here.
 	\begin{proof}
 		 Now, $c \in \Borelfunctions{X,Y}$ iff 
 		\[\forall x \ \forall z,z' \ \big(((x,z) \in \Analytic{X \times Y}_c \text{ and } (x,z') \in \Analytic{X\times Y}_c) \implies z=z'\big ),\]
 		and 
 		\[\forall x \ \exists z \ ((x,z) \in \Coanalytic{X\times Y}_c).\]
 		The former condition is clearly $\Pi^1_1$, while using the effective uniformization criterion \cite[4D.3]{Mo} the latter can be rewritten as 	\[\forall x \ \exists z \in \Delta^1_1(x,c) \ ((x,z) \in \Coanalytic{X \times Y}_c).\]
 		This also defines a $\Pi^1_1$ set by Kleene's theorem \cite[4D.4]{Mo} (for the definition of the class $\Delta^1_1$ see \cite[3.E]{Mo}).
 	\end{proof}
	
	Now consider the formula $\varphi'$:
	\[\forall t,b \in 2^\N \ \forall \epsilon>0\ \Big(t \not \in \Borelcodes{k} \text{ or } b \not \in \Borelcodes{k+n} \text{ or } \Analytic{k+n}_{b} \not \subset \Coanalytic{k}_t \times \R^n \text{ or } \]
	\[\Hausdorff{\alpha}(\Analytic{k}_t)=0 \text{ or } \Hausdorff{\alpha}(\Analytic{k}_t)=\infty \text{ or } \exists x \in \R^k \ \Hausdorff{\beta}((\Analytic{k+n}_{b})_x)>0 \text{ or }\]
	\[\exists f,\phi \in 2^\N \ \big(f \in \Borelcodes{k} \text{ and } \phi  \in \Borelfunctions{\R^k,2^\N}  \text{ and } \Hausdorff{\alpha}(\Analytic{k}_t \setminus \Coanalytic{k}_f)=0 \text{ and }\]
	\[  \forall x \in \R^k, y \in \R^n, z \in 2^\N \ (x \not \in \Analytic{k}_f \text{ or }  
	y \not \in (\Analytic{k+n}_{b})_x \text{ or } (x,z) \not \in \Analytic{\R^k \times 2^\N}_\phi \text{ or }\]
	\[
	 y \in 
	 \mathcal{U}(z) \text{ and }  \mathcal{S}(z) \leq \epsilon)\big )\Big),\]
	 
	 where we identify $H$ with $2^\N$ in the natural way.
	
	It should be clear from the properties of Borel codes that $\varphi'$ is equivalent to \eqref{eqpi13Zoli2}.

\begin{lemma}
	\label{l:pi13} The formula $\varphi'$ is equivalent to a $\Pi^1_3$ formula. 
\end{lemma}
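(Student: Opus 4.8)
The plan is to classify the analytical complexity of every syntactic constituent of $\varphi'$ and to observe that the outermost block of quantifiers of $\varphi'$ is universal, so that $\varphi'$ collapses to a $\Pi^1_3$ formula (the equivalence of $\varphi'$ with \eqref{eqpi13Zoli2} having already been noted). The real numbers $\alpha,\beta$ and the integers $n,k$ occurring as parameters are harmless: they may be absorbed into extra universal quantifiers at the front, which does not change the class. First I would record the standard uniform bounds for the coding sets: $\Borelcodes{X}$ is $\Pi^1_1$, $\Analytic{X}$ is $\Sigma^1_1$, $\Coanalytic{X}$ is $\Pi^1_1$, and $\Borelfunctions{X,Y}$ is $\Pi^1_1$ by Lemma~\ref{l:codesoffunctions}; moreover the sections $(\Analytic{k+n}_b)_x$ and $(\Coanalytic{k+n}_b)_x$ are $\Sigma^1_1$, resp.\ $\Pi^1_1$, uniformly in $(b,x)$. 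The key structural remark is that $\Analytic{X}_c=\Coanalytic{X}_c$ whenever $c\in\Borelcodes{X}$, so each coded-set atom of $\varphi'$ comes paired with an escape disjunct ``$c\notin\Borelcodes{X}$''; this allows me to use whichever of the analytic / coanalytic representations of that set (and in particular to treat its complement as analytic) keeps the quantifier count low, at the only cost of enlarging the formula by an equivalent $\Sigma^1_1$ disjunct.

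Next I would bound the measure-theoretic atoms. The predicates ``$y\in\mathcal U(z)$'' and ``$\mathcal S(z)\le\epsilon$'' unwind into quantifiers over $\N$ only, hence are arithmetical; likewise, for a compact set $K$ ranging over the recursively presented space $\mathcal K(\R^d)$ and a rational $c$, the statement $\Hausdorff{\gamma}(K)>c$ is arithmetical, because it is equivalent to ``$\exists\delta\in\Q^+$ such that every finite cover of $K$ by basic open sets of diameter at most $\delta$ has total $\gamma$-weight greater than $c$'', and $\{K:K\subset\bigcup_{i\le N}U_i\}$ is open in $\mathcal K(\R^d)$. For the truly infinitary atoms I would use two reductions. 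For an analytic set $A$, the plain covering definition of ``$\Hausdorff{\gamma}(A)=0$'' takes the form $\forall n\,\exists(\text{cover }c)\,\forall y\,\bigl(y\notin A\ \text{or}\ y\in\bigcup c\bigr)$, whose matrix is $\Pi^1_1$ (since ``$y\notin A$'' is $\Pi^1_1$), so the statement is $\Sigma^1_2$. For ``$\Hausdorff{\gamma}(A)>0$'' and ``$\Hausdorff{\gamma}(A)=\infty$'' with $A$ Borel --- which is the relevant situation in $\varphi'$ once the accompanying $\notin\Borelcodes{X}$ disjunct is absorbed --- I would invoke the Besicovitch--Davies subset theorem in the form $\Hausdorff{\gamma}(A)=\sup\{\Hausdorff{\gamma}(K):K\subset A\ \text{compact}\}$ and rewrite these as $\exists K$ (resp.\ $\forall M\,\exists K$) with conjuncts ``$K\subset A$'' and ``$\Hausdorff{\gamma}(K)>c$'': the latter is arithmetical by the previous computation, and ``$K\subset A$'', i.e.\ ``$K$ does not meet the analytic set $\R^d\setminus A$'', is $\Pi^1_1$, so both statements are $\Sigma^1_2$.

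Then I would simply traverse $\varphi'$. The disjuncts ``$t\notin\Borelcodes{k}$'', ``$b\notin\Borelcodes{k+n}$'' and ``$\Analytic{k+n}_b\not\subset\Coanalytic{k}_t\times\R^n$'' are $\Sigma^1_1$; ``$\Hausdorff{\alpha}(\Analytic{k}_t)=0$'', ``$\Hausdorff{\alpha}(\Analytic{k}_t)=\infty$'' and ``$\exists x\in\R^k\ \Hausdorff{\beta}\bigl((\Analytic{k+n}_b)_x\bigr)>0$'' are $\Sigma^1_2$ by the second paragraph (using the $\notin\Borelcodes{X}$ escapes for the last two); and, in the final disjunct, after the block $\exists f,\phi\in2^\N$ one has the conjunction of the $\Pi^1_1$ conditions ``$f\in\Borelcodes{k}$'', ``$\phi\in\Borelfunctions{\R^k,2^\N}$'', and the innermost clause $\forall x,y,z(\dots)$ (whose matrix is a disjunction of three $\Pi^1_1$ predicates and an arithmetical one, hence $\Pi^1_1$), together with the $\Sigma^1_2$ condition ``$\Hausdorff{\alpha}(\Analytic{k}_t\setminus\Coanalytic{k}_f)=0$''; this conjunction is $\Sigma^1_2$, and its projection by $\exists f,\phi$ stays $\Sigma^1_2$. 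Taking $\epsilon$ rational so that ``$\forall\epsilon>0$'' becomes a universal number quantifier, the entire matrix of $\varphi'$ below the block ``$\forall t,b\in2^\N\ \forall\epsilon$'' is a finite disjunction of $\Sigma^1_2$ formulas, hence $\Sigma^1_2$; prefixing the universal block yields a $\Pi^1_3$ formula, as desired.

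I expect the main obstacle to be exactly the treatment of the Hausdorff-measure atoms --- forcing ``$\Hausdorff{\gamma}(\cdot)=0$'', ``$>0$'' and ``$=\infty$'' on analytic/Borel sets down to $\Sigma^1_2$ rather than to a level that would push the outer $\forall$ into $\Pi^1_4$. This is where one genuinely needs the interplay of (i) the Besicovitch--Davies subset theorem, used to replace a set by its compact subsets, (ii) the finite-subcover computation that makes $\Hausdorff{\gamma}$ of a compact set arithmetically computable, and (iii) the bookkeeping of which representation (analytic versus coanalytic) of each coded set is invoked where, exploiting that every coded-set atom of $\varphi'$ travels with its ``not a code'' disjunct so that all the sets in question may be treated as Borel with analytic complements.
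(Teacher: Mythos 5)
Your proposal is correct and takes essentially the same route as the paper: classify each constituent of $\varphi'$, use the Besicovitch--Davies regularity of $\Hausdorff{\gamma}$ on analytic sets to bring the Hausdorff-measure atoms down to $\Sigma^1_2$, exploit the ``not a Borel code'' escape disjuncts to switch freely between $\Analytic{}$ and $\Coanalytic{}$ representations, and observe that the outer block of universal quantifiers over a $\Sigma^1_2$ disjunction yields $\Pi^1_3$. The only (harmless) divergence is that you sharpen ``$\Hausdorff{\gamma}(K)>c$ for compact $K$'' to arithmetical via finite covers, whereas the paper contents itself with the weaker observation that this set of compacta is $\Pi^1_1\cap\Sigma^1_1$; either bound suffices.
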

\begin{proof}

	The conjunction of the following observations yields that $\varphi'$ has the desired complexity.
	\begin{enumerate}
	\item $t \not \in \Borelcodes{k}$ and $b \not \in \Borelcodes{k+n}$ are $\Sigma^1_1$.
\item $\Analytic{k+n}_{b} \not \subset \Coanalytic{k}_t \times \R^n$ is equivalent to $\exists x \ (x \in \Analytic{k+n}_{b} \land x \not \in \Coanalytic{k}_t \times \R^n)$, hence it is also $\Sigma^1_1$.
\item \label{c:hnull} $\Hausdorff{\alpha}(\Analytic{k}_t)=0$ is equivalent to $\forall n  \ \exists z \ \forall y$ \[ (y \not \in \Analytic{k}_t \text{ or } y \in \mathcal{U}(z)) \text{ and } \sum_{D \in z} diam^\alpha(D)<\frac{1}{n},\] by the definition of $\Hausdorff{\alpha}$. This is a $\Sigma^1_2$ formula.
\item \label{c:hinfty} $\Hausdorff{\alpha}(\Analytic{k}_t)=\infty$ is equivalent to $\forall n \ \exists K$ compact $\forall y \ ((y \not \in K$ or $y \in \Coanalytic{k}_t)$ and  $\Hausdorff{\alpha}(K)>n),$ by the regularity of $\Hausdorff{\alpha}$ on analytic sets and by $\Analytic{k}_t=\Coanalytic{k}_t$.  Since the collection of compacts with Hausdorff measure $>n$ is a $\Pi^1_1$ (and also $\Sigma^1_1$) set, this is a $\Sigma^1_2$ formula.

\item  $\Hausdorff{\beta}((\Coanalytic{k+n}_{b})_x)>0$ is equivalent to a $\Sigma^1_2$ formula, similarly to \ref{c:hinfty}.
\item $f \in \Borelcodes{k} \text{ and } \phi  \in \Borelfunctions{\R^k,2^\N}$ are $\Pi^1_1$ by Lemma \ref{l:codesoffunctions}.
\item $\Hausdorff{\alpha}(\Analytic{k}_t \setminus \Coanalytic{k}_f)=0$ is equivalent to a $\Sigma^1_2$ formula, as in \ref{c:hnull}.
\item $x \not \in \Analytic{k}_f,
y \not \in (\Analytic{k+n}_{b})_x, (x,z) \not \in \Analytic{\R^k \times 2^\N}_\phi$, $y \in 
\mathcal{U}(z)$, $\mathcal{S}(z) \leq \epsilon$ are $\Pi^1_1$.
		 
\end{enumerate}
	
\end{proof}
\end{proof}


\begin{thebibliography}{20}
\bibitem{ACP}
G. Alberti, M. Cs\"ornyei, and D. Preiss,  
Differentiability of Lipschitz functions, structure of null sets, and other problems. 
In \textit{Proceedings of the International Congress of Mathematicians. Volume
III}, 1379–1394. Hindustan Book Agency, New Delhi, 2010. 

\bibitem{ABMP}
 O. Angel, R. Balka, A. M\'ath\'e, Y. Peres, 
\newblock Restrictions of H\"older continuous functions, 
\newblock \textit{Trans. Amer. Math. Soc.} \textbf{370} (2018), 4223-4247. 

\bibitem{BJ}
T. Bartoszy\'nski, H. Judah,   
\newblock \textit{Set Theory: On the Structure of the Real Line}, 
\newblock A K Peters, Wellesley, Massachusetts, 1995.

\bibitem{CC}
X. Caicedo, C. Montenegro,   
\newblock \textit{Models, Algebras, and Proofs; Selected papers of the X Latin American Symposium on Mathematical Logic held in Bogot\'a}, 
\newblock Marcel Dekker, Inc. 1999. 

\bibitem{CDT}
M. Cs\"ornyei, D. Preiss, and J. Ti\v{s}er, 
Lipschitz functions with unexpectedly large sets of nondifferentiability points,
\textit{Abstr. Appl. Anal.} \textbf{4} (2005), 361-373. 


\bibitem{De}
C. Dellacherie, 
\newblock \textit{Ensembles Analytiques, Capacit\'es, Mesures de Hausdorff},
\newblock Lecture Notes in Mathematics 295, Springer-Verlag, 1972. 

\bibitem{Fa}
K. Falconer,
\newblock \textit{The Geometry of Fractal Sets}, 
\newblock Cambridge University Press, 1985.

\bibitem{FaMau}
K. Falconer, R. D. Mauldin, 
\newblock Fubini-type theorems for general measure constructions, 
\newblock \textit{Mathematika} \textbf{47} (1-2) (2000), 251-265.



\bibitem{FM}
K. Falconer, P. Mattila, 
\newblock Strong Marstrand theorems and dimensions of sets formed by subsets of hyperplanes, 
\newblock  \textit{J. Fractal Geom.} \textbf{3} (2016), 319-329.

\bibitem{FO13}
K. F\"assler, T. Orponen,
\newblock Constancy results for special families of projections,
\newblock  \textit{Math. Proc. Camb. Philos. Soc.} \textbf{154} (3) (2013), 549-568.

\bibitem{FO14}
K. F\"assler and T. Orponen, 
On restricted families of projections in $\rr^3$, 
 \textit{Proc. London Math. Soc.} (3) \textbf{109} (2014), 353-381.



\bibitem{Fe}
H. Federer, 
\newblock Geometric Measure Theory, 
\newblock Springer, New York, 1969. 


\bibitem{FH}
J. M. Fraser, J. T. Hyde, 
\newblock The Hausdorff dimension of graphs of prevalent continuous functions, 
\newblock \textit{Real Analysis Exchange} \textbf{37}, (2011/2012), 333-352. 



\bibitem{H}
K. H\'era, 
\newblock Hausdorff dimension of Furstenberg-type sets associated to families of affine subspaces, 
\newblock \textit{Ann. Acad. Sci. Fenn. Math.}, \textbf{44} (2019), 903-923. 


\bibitem{HKM}
K. H\'era, T. Keleti, A. M\'ath\'e, 
\newblock Hausdorff dimension of unions of affine subspaces and of Furstenberg-type sets, 
\newblock \textit{J. Fractal Geom.} \textbf{6} (2019), 263-284.


\bibitem{Ho}
J. D. Howroyd,
\newblock On dimension and on the existence of sets of finite positive Hausdorff measure,
\newblock \textit{Proc. London Math. Soc. (3)} \textbf{70} (1995), 581-604.

\bibitem{JJK}
E. J\"arvenp\"a\"a, M. J\"arvenp\"a\"a and T. Keleti, 
\newblock Hausdorff dimension and non-degenerate families of projections,  
\newblock \textit{J Geom Anal} \textbf{24} (2014), 2020-2034. 

\bibitem{Je}
T.~Jech,
\newblock \textit{Set theory},
\newblock Springer Monographs in Mathematics, Springer-Verlag, Berlin, 2003.
\newblock The third millennium edition, revised and expanded.



\bibitem{Kec}
A. S. Kechris, 
\newblock \textit{Classical Descriptive Set Theory}, 
\newblock Graduate Texts in Mathematics 156, Springer-Verlag, 1994.

\bibitem{Ke}
T.~Keleti, 
\newblock Are lines much bigger than line segments?, 
\newblock \textit{Proc. Amer. Math. Soc.} \textbf{144} (2016), 1535--1541. 

\bibitem{KM}
T.~Keleti, A. M\'ath\'e, 
\newblock Equivalences between different forms of the Kakeya conjecture and duality of Hausdorff and packing dimensions for additive complements,
\newblock arXiv:2203.15731.


\bibitem{Ku}
K.~Kunen,
\newblock \textit{Set theory. An introduction to Independence Proof},
\newblock North-Holland, Amsterdam, 1992.





\bibitem{MP}
O. Maleva and D. Preiss, 
Cone unrectifiable sets and non-differentiability of Lipschitz functions, 
\textit{Israel J. Math.} \textbf{232} (2019),  75-108.

\bibitem{Ma}
P.~Mattila, 
\newblock \textit{Geometry of sets and measures in Euclidean spaces}, 
\newblock Cambridge University Press, 1995.

\bibitem{Ma15}
P.~Mattila, 
\newblock \textit{Fourier Analysis and Hausdorff Dimension},
\newblock Cambridge University Press, 2015.


\bibitem{Ma17}
P.~Mattila, 
\newblock Hausdorff dimension, projections, intersections, and Besicovitch sets,
\newblock \textit{New Trends in Applied Harmonic Analysis} \textbf{2} (2019), 129-157.



\bibitem{MM}
P.~Mattila, R.~D.~Mauldin, 
\newblock Measure and dimension functions: measurability and densities, 
\newblock \textit{Math. Proc. Camb. Philos. Soc.} \textbf{121} (1) (1997), 81-100. 


\bibitem{MW}
R.~D.~Mauldin, S.~C.~Williams, 
\newblock On the Hausdorff dimension of some graphs, 
\newblock \textit{Trans. Amer. Math. Soc.} \textbf{298} (1986), 793-803.



\bibitem{Mi}
A.~W.~Miller,  
\newblock \textit{Descriptive Set Theory and Forcing: How to prove theorems about Borel sets the hard way},
\newblock Springer-Verlag, 1995.

\bibitem{MoPe}
P.~M\"orters,Y.~Peres, 
\newblock \textit{Brownian Motion},
\newblock Cambridge University Press, 2010.

\bibitem{Mo}
Y.~N. Moschovakis.
\newblock \textit{Descriptive set theory}, volume 155 of {\em Mathematical Surveys
  and Monographs}, 
\newblock American Mathematical Society, Providence, RI, second edition, 2009.

\bibitem{Ob2}
D. M.~Oberlin, 
\newblock Exceptional sets of projections, unions of $k$-planes, and associated transforms, 
\newblock \emph{Israel J. Math.} \textbf{202} (2014), 331--342.

\bibitem{OO}
D. M. Oberlin and R. Oberlin, 
Application of a Fourier restriction theorem to certain families
of projections in $\rr^3$, 
\textit{J Geom Anal}  \textbf{25} (2015), 1476-1491. 

\bibitem{Or15}
T. Orponen, 
Hausdorff dimension estimates for some restricted families of projections in $\rr^3$,  
\textit{Adv. Math.} \textbf{275} (2015), 147-183.


\bibitem{Pr}
D. Preiss, 
Gateaux differentiability of cone-monotone and pointwise Lipschitz functions, 
\textit{Israel J. Math.}, \textbf{203 (1)} (2014), 501-534.

\bibitem{PZ}
D. Preiss and L. Zaj\'i\v{c}ek, 
Directional derivatives of Lipschitz functions, 
\textit{Israel J. Math.} \textbf{125} (2001), 1-27.




\end{thebibliography}
\end{document}